\documentclass{amsart}

\usepackage[normalem]{ulem}
\usepackage{amsmath,amsthm, amssymb,mathrsfs,  xypic, url, verbatim,stmaryrd}
\usepackage{xr-hyper}
\usepackage{hyperref}
\usepackage{stackengine,scalerel}
\usepackage{todonotes}
\presetkeys{todonotes}{inline, size=\small}{}
\usepackage[capitalize]{cleveref}
\usepackage{graphicx}
\usepackage{quiver}
\usepackage{tikz}
\usepackage{enumitem}
\usetikzlibrary{matrix, arrows}
\usetikzlibrary{cd}

\crefname{maintheorem}{Theorem}{Theorems}
\crefname{maincorollary}{Corollary}{Corollaries}
\crefname{conjecture}{Conjecture}{Conjectures}

\newcommand{\mbb}[1]{\mathbb{#1}}

\makeatletter
\newcommand*{\addFileDependency}[1]{
  \typeout{(#1)}
  \@addtofilelist{#1}
  \IfFileExists{#1}{}{\typeout{No file #1.}}
}
\makeatother

\addFileDependency{main-1.tex}
\addFileDependency{main-1.aux}

\begin{document}
\newcommand{\actsonr}{\mathrel{\reflectbox{$\righttoleftarrow$}}}
\newcommand{\actsonl}{\mathrel{\reflectbox{$\lefttorightarrow$}}}

\newcommand{\floor}[1]{\lfloor #1 \rfloor}

\newcommand{\isoeq}{\cong}
\newcommand{\cech}{\vee}
\newcommand{\dsum}{\mathop{\oplus}}
\newcommand{\End}{\mathrm{End}}

\newcommand{\calQ}{\mathcal{Q}}
\newcommand{\calO}{\mathcal{O}}
\newcommand{\calM}{\mathcal{M}}

\newcommand{\frakm}{\mathfrak{m}}

\newcommand{\bbA}{\mathbb{A}}
\newcommand{\bbB}{\mathbb{B}}
\newcommand{\bbC}{\mathbb{C}}
\newcommand{\bbD}{\mathbb{D}}
\newcommand{\bbE}{\mathbb{E}}
\newcommand{\bbF}{\mathbb{F}}
\newcommand{\bbG}{\mathbb{G}}
\newcommand{\bbH}{\mathbb{H}}
\newcommand{\bbI}{\mathbb{I}}
\newcommand{\bbJ}{\mathbb{J}}
\newcommand{\bbK}{\mathbb{K}}
\newcommand{\bbL}{\mathbb{L}}
\newcommand{\bbM}{\mathbb{M}}
\newcommand{\bbN}{\mathbb{N}}
\newcommand{\bbO}{\mathbb{O}}
\newcommand{\bbP}{\mathbb{P}}
\newcommand{\bbQ}{\mathbb{Q}}
\newcommand{\bbR}{\mathbb{R}}
\newcommand{\bbS}{\mathbb{S}}
\newcommand{\bbT}{\mathbb{T}}
\newcommand{\bbU}{\mathbb{U}}
\newcommand{\bbV}{\mathbb{V}}
\newcommand{\bbW}{\mathbb{W}}
\newcommand{\bbX}{\mathbb{X}}
\newcommand{\bbY}{\mathbb{Y}}
\newcommand{\bbZ}{\mathbb{Z}}

\newcommand{\bc}{\mathbf{c}}

\newcommand{\Spec}{\mathrm{Spec}}\
\newcommand{\Triv}{\mathrm{Triv}}
\newcommand{\Loc}{\mathrm{Loc}}
\newcommand{\Et}{\mathrm{Et}}

\newcommand{\ul}[1]{\underline{#1}}

\newcommand{\Vect}{\mathrm{Vect}}
\newcommand{\FilVect}{\mathrm{FilVect}}
\newcommand{\FilVB}{\mathrm{FilVB}}

\newcommand{\trFil}{\mathrm{trFil}}
\newcommand{\gr}{\mathrm{gr}}
\newcommand{\LattFilt}[1]{\mr{Latt}_{#1}\mr{Vect}}

\newcommand{\dR}{\mathrm{dR}}

\newcommand{\HS}{\mathrm{HS}}

\newcommand{\Hodge}{\mathrm{Hdg}}

\newcommand{\BC}{\mathrm{BC}}
\newcommand{\CM}{\mathrm{CM}}

\newcommand{\goodred}{\textrm{good-red}}

\newcommand{\dash}{\mathrm{-}}
\newcommand\+{\mkern3mu}

\newcommand{\rev}[1]{{\color{teal} #1}}
\newcommand{\strike}[1]{{\color{gray} \sout{#1}}}
\newcommand{\spc}[1]{{\color{blue} \textsf{$\blacktriangle\blacktriangle\blacktriangle$ Comment: [#1]}}}

\newcommand{\Iref}[1]{I.\cref{I.#1}}

\newcommand{\triv}{\mr{triv}}
\newcommand{\crys}{\mathrm{crys}}
\newcommand{\Fil}{\mathrm{Fil}}
\newcommand{\calL}{\mathcal{L}}
\newcommand{\Stab}{\mathrm{Stab}}
\newcommand{\Spd}{\mathrm{Spd}}

\newcommand{\HT}{\mathrm{HT}}
\newcommand{\GH}{\mathrm{GH}}
\newcommand{\GM}{\mathrm{GM}}
\newcommand{\Gr}{\mathrm{Gr}}
\newcommand{\Fl}{\mathrm{Fl}}
\newcommand{\LT}{\mathrm{LT}}
\newcommand{\colim}{\mathrm{colim}}
\newcommand{\cris}{\mathrm{cris}}
\renewcommand{\l}{\left}
\renewcommand{\r}{\right}
\newcommand{\GL}{\mathrm{GL}}
\newcommand{\qpet}{\mr{qpet}}

\newcommand{\Perf}{\mathrm{Perf}}
\newcommand{\Perfd}{\mathrm{Perfd}}
\newcommand{\mc}[1]{\mathcal{#1}}

\newcommand{\mr}[1]{\mathrm{#1}}
\newcommand{\mf}[1]{\mathfrak{#1}}
\newcommand{\ms}[1]{\mathscr{#1}}

\newcommand{\ol}[1]{\overline{#1}}

\newcommand{\Kt}{\mathrm{Kt}}
\newcommand{\Isoc}{\mr{Isoc}} 

\newcommand{\Spa}{\mathrm{Spa}}
\newcommand{\Spf}{\mathrm{Spf}}
\newcommand{\perf}{\mathrm{perf}}
\newcommand{\Spr}{\mr{Spr}}
\newcommand{\Hom}{\mr{Hom}}
\newcommand{\Ker}{\mr{Ker}}
\newcommand{\adm}{\mr{adm}}
\newcommand{\bs}{\backslash}
\newcommand{\BB}{\mathrm{BB}}
\newcommand{\an}{\mathrm{an}}
\newcommand{\AdmFil}{\mathrm{AdmFil}} 

\newcommand{\lfid}{{\diamond_{\mathrm{lf}}}}
\newcommand{\lf}{\mathrm{lf}}

\newcommand{\QQ}{\mathbb{Q}}
\newcommand{\ZZ}{\mathbb{Z}}

\newcommand{\FF}{\mathrm{FF}}

\newcommand{\proet}{\mr{pro\acute{e}t}}
\newcommand{\et}{\mr{\acute{e}t}}
\newcommand{\fet}{\mr{f\acute{e}t}}

\newcommand{\badm}{{b\textrm{-adm}}}
\newcommand{\Qp}{{\mbb{Q}_p}}
\newcommand{\Qpbreve}{{\breve{\mbb{Q}}_p}}
\newcommand{\Qpbrevebar}{{\overline{\breve{\mbb{Q}}}_p}}

\newcommand{\Hdg}{\mr{Hdg}}
\newcommand{\Hdggen}{\mr{gen}}
\newcommand{\MT}{\mr{MT}}
\newcommand{\Rep}{\mathrm{Rep}}

\newcommand{\FilPhiMod}{\mathrm{MF}^{\varphi}}
\newcommand{\waFilPhiMod}{\mathrm{MF}^{\varphi, \mr{wa}}}

\newcommand{\Isom}{\mr{Isom}}
\newcommand{\Gal}{\mr{Gal}}
\newcommand{\Aut}{\mr{Aut}}
\newcommand{\AdmPair}{\mr{AdmPair}}
\newcommand{\Forget}{\mr{Forget}}
\newcommand{\HN}{\mr{HN}}
\newcommand{\HNsplit}{\mr{HN}\dash\mr{split}}

\newcommand{\Cont}{\mr{Cont}}
\newcommand{\AP}{\mathbf{A}}
\newcommand{\GAP}{\mathbf{A}}
\newcommand{\ev}{\mathrm{ev}}
\newcommand{\std}{\mathrm{std}}
\newcommand{\Lie}{\mr{Lie}}
\newcommand{\Sh}{\mr{Sh}}
\newcommand{\univ}{\mr{univ}}
\newcommand{\MF}{\mathrm{MF}}
\newcommand{\can}{\mr{can}}
\newcommand{\MG}{\mr{MG}}
\newcommand{\VHS}{\mr{VHS}}

\newcommand{\spck}[1]{{\color{teal} \textsf{$\dagger\dagger\dagger$ CK: [#1]}}}
\newcommand{\bilatticed}{\mathrm{bl}}
\newcommand{\latticed}{\mathrm{latticed}}
\newcommand{\filt}{\mathrm{filt}}
\newcommand{\FamHS}{\mathrm{FamHS}}
\newcommand{\basic}{\mr{basic}}
\newcommand{\lsi}{\mathbb{L}}
\newcommand{\M}{\mathcal{M}}

\newcommand{\ab}{\mathrm{ab}}\newcommand{\der}{\mathrm{der}}

\newcommand{\impi}{\mathrm{D}}

\numberwithin{equation}{subsubsection}
\theoremstyle{plain}

\newtheorem{maintheorem}{Theorem} 
\renewcommand{\themaintheorem}{\Alph{maintheorem}} 
\newtheorem{maincorollary}[maintheorem]{Corollary}
\newtheorem{mainconjecture}[maintheorem]{Conjecture}

\newtheorem*{theorem*}{Theorem}

\newtheorem{theorem}[subsubsection]{Theorem}
\newtheorem{corollary}[subsubsection]{Corollary}
\newtheorem{conjecture}[subsubsection]{Conjecture}
\newtheorem{proposition}[subsubsection]{Proposition}
\newtheorem{lemma}[subsubsection]{Lemma}

\theoremstyle{definition}

\newtheorem{example}[subsubsection]{Example}
\newtheorem{assumption}[subsubsection]{Assumption}
\newtheorem{definition}[subsubsection]{Definition}
\newtheorem{remark}[subsubsection]{Remark}
\newtheorem{question}[subsubsection]{Question}

\renewcommand{\Vect}{\mathrm{Vect}}

\title[Admissible pairs and $p$-adic Hodge structures III]{Admissible pairs and $p$-adic Hodge structures III: Variation and unlikely intersection}

\author{Sean Howe}
\email{sean.howe@utah.edu}
\author{Christian Klevdal}
\email{klevdal@math.utah.edu}

\begin{abstract} 
We extend the relative theory of admissible pairs and $p$-adic Hodge structures introduced in Part II to allow variation in the underlying local systems of $\mathbb{Q}_p$-vector spaces and isocrystals. This extension accommodates, in particular, the families of $p$-adic Hodge structures that arise from the cohomology of certain smooth proper families. Such a variation gives rise to a cover with a global Hodge period map, and we study this cover and its period map from a differential perspective both classically and via the theory of inscription. This study is motivated by our transcendence results in Parts I and II and analogies with complex bi-algebraic geometry, and we also extend these ideas in other directions: First, we study the locus of special points on local Shimura varieties. We establish a refined version of a prediction of Rapoport-Viehmann on the density of special points, but give a robust counter-example to the local analog of the Andr\'{e}-Oort conjecture in this setting. This shuts down a broader theory of unlikely intersection but leaves open the possibility of stronger geometric transcendence results than the bi-analytic Ax--Lindemann theorem of Part II. Using the Banach-Colmez Tangent Bundles for infinite level local Shimura varieties that arise from the theory of inscription, we define precise notions of generic and exceptional intersections and then formulate an Ax--Schanuel conjecture that we expect to refine our Ax--Lindemann theorem.   
\end{abstract}
\maketitle

\tableofcontents

\section{Introduction}

In Part I \cite{howe-klevdal:ap-pahsI} of this series of papers, we defined a category of \emph{$p$-adic Hodge structures} that provides a natural structural analog of the category of real Hodge structures, and then developed a local $p$-adic transcendence theory by comparing $p$-adic Hodge structures with a toy category of cohomological motives, the \emph{admissible pairs}. The main result of Part I \cite[Theorem B]{howe-klevdal:ap-pahsI} is a characterization of complex multiplication for basic admissible pairs in terms of the transcendence properties of their periods. This is a local, $p$-adic analog of results of Schneider \cite{schneider:j}, Cohen \cite{cohen:transcendence}, and Shiga--Wolfart \cite{shiga-wolfart:transcendence} for abelian varieties, and generalizes an earlier result of one of the authors \cite{howe:transcendence} in the $p$-adic setting for one-dimensional formal groups. 

The theory of Part I depends on the choice of an algebraically closed perfectoid field $C/\mathbb{Q}_p$. In Part II, we gave a na\"{i}ve relativization of this theory by introducing a category of \emph{neutral} $p$-adic Hodge structures and admissible pairs over an arbitrary locally spatial diamond, then used this to study infinite level local Shimura varieties and their non-minuscule analogs from a Tannakian perspective as moduli spaces of admissible pairs and $p$-adic Hodge structures. The main result of Part II, \cite[Theorem A]{howe-klevdal:ap-pahsII}\footnote{See also \cite[Theorem 9.4]{howe-klevdal:ap-pahsII} for a generalization to the non-minuscule case.}, is a bi-analytic Ax--Lindemann theorem for infinite level local Shimura varieties. It implies that, in the basic case, the \emph{special subvarieties} of these moduli spaces, i.e.\ the loci where the objects parameterized have extra symmetries (as measured by a reduction of Tannakian structure group or by the existence of Hodge and Hodge-Tate tensors), are exactly the bi-analytic subvarieties, generalizing the case of special points obtained from the main result of Part I \cite[Corollary C]{howe-klevdal:ap-pahsI}.

Over a geometric point, a $p$-adic Hodge structure is a graded $B^+_\dR$-latticed $\mathbb{Q}_p$-vector space satisfying an extra property (analogous to the transversality of the Hodge filtration with its conjugate in complex Hodge theory), and an admissible pair is a $B^+_\dR$-latticed isocrystal satisfying an extra property. The relativization used in \cite{howe-klevdal:ap-pahsII} was na\"{i}ve in the sense that we did not allow any variation in the underlying $\mathbb{Q}_p$-vector space or underlying isocrystal. In particular, the notion of a neutral $p$-adic Hodge structure or admissible pair is poorly behaved from the perspective of descent. This notion was sufficient for the study of the moduli spaces treated in \cite{howe-klevdal:ap-pahsII}, however, any deeper study of these objects should allow for such a variation: for example, already in \cite[\S 10.1]{howe-klevdal:ap-pahsII}, in order to explain some possible generalizations of the bi-analytic Ax--Lindemann theorem, we introduced the notion of a \emph{variation of $p$-adic Hodge structure} over a smooth rigid analytic variety. This notion captures the families of $p$-adic Hodge structures that arise naturally from the cohomology of smooth proper families of rigid analytic varieties, but requires a non-trivial $\mathbb{Q}_p$-local system in place of the $\mathbb{Q}_p$-vector space that is used in the neutral theory.

The first purpose of this paper is to set up the general relative theory over a small $v$-sheaf, allowing variation in the underlying $\mathbb{Q}_p$-local system or isocrystal. The resulting categories of admissible pairs and $p$-adic Hodge structures are $v$-stacks, and \cref{theorem.ap-hs-properties} gives an equivalence between the $v$-stack of \emph{basic} admissible pairs and the $v$-stack of $p$-adic Hodge structures extending the equivalence over geometric points of \cite[Theorem 5.1.6]{howe-klevdal:ap-pahsI}. The neutral objects of \cite{howe-klevdal:ap-pahsII} form a subcategory, and, when the base is connected, it is a full subcategory; this subcategory was sufficient for the study of the moduli problems in \cite{howe-klevdal:ap-pahsII} because those moduli problems include trivializations of the underlying local system of $\mathbb{Q}_p$-vector spaces or isocrystals. When the base is $\Spd\, L$ for $L$ a $p$-adic field (that is, a complete discretely valued extension of $\mathbb{Q}_p$ with perfect residue field), Fontaine's category of admissible filtered $\varphi$-modules is equivalent to a full subcategory of unramified admissible pairs over $\Spd\, L$, and the comparison with crystalline Galois representations can be reframed in this language as a comparison with a full subcategory of variations of $p$-adic Hodge structure over $\Spd\, L$ (this is a rephrasing in our language of an observation of Fargues--Fontaine \cite{fargues-fontaine:courbes}, and was discussed for strict $p$-adic fields in \cite[\S5.5]{howe-klevdal:ap-pahsI}). 

The second purpose is to continue the exploration of variations of $p$-adic Hodge structure and more general transcendence theorems in $p$-adic bi-analytic geometry, along with the analogy between these and results in complex bi-algebraic theory, as begun already in \cite[\S 10.1]{howe-klevdal:ap-pahsII}. Our study here moves in three distinct directions.

\subsection{Variations of $p$-adic Hodge structure and period maps}
As described in \cite[\S 10.1]{howe-klevdal:ap-pahsII}, any variation of $p$-adic Hodge structure over a smooth rigid analytic variety gives rise to a covering space equipped with an \'{e}tale lattice period map lifting the Hodge period map along the Bialynicki-Birula morphism. We make a precise conjecture (\cref{conj.pot-good-red}) to the effect that this covering space is potentially unramified, and explain why we expect that a proof of this conjecture would lead to a generalization of the bi-analytic Ax--Lindemann theorem of \cite{howe-klevdal:ap-pahsII}. In particular, one expects that the derivative of the Hodge period map is simply the Kodaira-Spencer map and that maps from such a covering space to the $\mathbb{B}^+_\dR$-affine Grassmannian can be characterized as in the rigid analytic case \cite[Theorem 5.4-(4)]{howe-klevdal:ap-pahsII} (\cref{conj.pot-unram-prop}). Although we do not prove this conjecture, we can sidestep it to provide some evidence that it is reasonable: this covering space, which is a priori only a diamond, can be equipped with a natural differentiable structure in the form of an inscription as in \cite{Howe.InscriptionTwistorsAndPAdicPeriods}. The Hodge and \'{e}tale lattice period maps upgrade to maps of inscribed $v$-sheaves, and we explain how the Kodaira--Spencer map appears as the derivative of the inscribed Hodge period map while its canonical lift appears as the derivative of the inscribed \'{e}tale lattice period map. 

\subsection{Density of special points and Andr\'{e}-Oort}
The hyperbolic Ax--Lindemann theorem in complex bi-algebraic geometry plays a key role in the proof of the Andr\'{e}-Oort conjecture for global Shimura varieties. Given the bi-analytic Ax--Lindemann theorem \cite[Theorem A]{howe-klevdal:ap-pahsII}, it is natural to ask if there is a reasonable local Andr\'{e}-Oort conjecture for local Shimura varieties. In \cref{s.special-points}, we make a study of the structure of the special points that rules out the most na\"{i}ve such generalization. 

We first give a positive result:
\begin{theorem*}[\cref{theorem.special-points-dense} and \cref{corollary.density-of-special-points}] Let $G/\mathbb{Q}_p$ be a connected reductive group, let $[\mu]$ be a conjugacy class of minuscule cocharacters of $G_{\overline{\mathbb{Q}}_p}$, and let $b$ represent a class in the Kottwitz set $B(G,[\mu^{-1}])$. 
Let $\Fl_{[\mu^{-1}]}^{b-\adm}$ be the associated $b$-admissible locus and, for $K \leq G(\mathbb{Q}_p)$ a compact open subgroup, let $\mathcal{M}_{b,[\mu], K}$ be the associated local Shimura variety, an \'{e}tale covering space of $\Fl_{[\mu^{-1}]}^{b-\adm}$. The following are equivalent:
\begin{enumerate}
    \item Special points are rigid analytically Zariski dense in $\Fl_{[\mu^{-1}]}^{b-\adm}$.
    \item Special points are rigid analytically Zariski dense in $\mathcal{M}_{b, [\mu], K}$.
    \item $b$ represents the unique basic class in $B(G,[\mu^{-1}])$. 
\end{enumerate}
\end{theorem*}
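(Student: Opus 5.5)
The plan is to prove (1)$\Leftrightarrow$(2) formally and then (3)$\Leftrightarrow$(1) by a direct analysis of special points. Here a special point $x$ of $\Fl_{[\mu^{-1}]}^{b-\adm}$ is one whose filtration is induced by a cocharacter factoring through a torus $T\subset G$ defined over $\Qp$ for which $b$ admits a reduction to $T$; equivalently, the associated $p$-adic Hodge structure/admissible pair has CM, as in Part I.

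\emph{Step 1: (1)$\Leftrightarrow$(2).} The map $\pi\colon\mathcal{M}_{b,[\mu],K}\to\Fl_{[\mu^{-1}]}^{b-\adm}$ is an \'etale cover, and a point of $\mathcal{M}$ is special exactly when its image is special. Indeed, speciality is detected by the reduction of Tannakian structure group of the associated admissible pair, and a level structure does not change this. The fibers of $\pi$ over a special point are $G(\Qp)/K$-torsors, and the Hecke action of $G(\Qp)$ preserves special points. The closure of the special locus in $\mathcal{M}_{b,[\mu],K}$ is therefore a $G(\Qp)$-stable analytic subset. Its image under $\pi$ (a finite \'etale map locally on connected components) contains the closure of the special locus downstairs, which gives (2)$\Rightarrow$(1). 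For the converse, if $Z\subsetneq\mathcal M$ is a proper Zariski closed set containing all special points, I will replace $Z$ by the intersection of its Hecke translates. That set is still proper, and it descends to a proper Zariski closed subset of $\Fl^{b-\adm}$, since $\pi$ is open and finite on components. This contradicts (1).

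\emph{Step 2: (3)$\Rightarrow$(1).} Take $b$ basic. Every maximal torus $T\subset G$ over $\Qp$ that is elliptic modulo the center admits a basic reduction of $b$ (Kottwitz): for any $\mu'\in X_*(T)$ conjugate to $\mu$, the class $\kappa_T(\mu'^{-1})$ maps to $[b]$ in $B(G)$. Thus every point $\mu'\in X_*(T)\cap[\mu^{-1}]$ gives a special point of $\Fl_{[\mu^{-1}]}$, and these automatically lie in the admissible locus because weakly admissible points that are CM are admissible. Conjugating by $G(\Qp)$ gives the special points $g\cdot x_{T,\mu'}$ for $g\in G(\Qp)$ and $T$ ranging over elliptic tori. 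I will show these are Zariski dense. Let $Z$ be an analytic subset containing them. For a fixed elliptic $T$, the orbit $G(\Qp)\cdot x$ is dense in the $G(\bar\Qp)$-orbit, i.e. in all of $\Fl_{[\mu^{-1}]}$. This holds because $G(\Qp)$ is Zariski dense in $G$, and rigid-analytic Zariski closure of an orbit of a $p$-adic Lie group acting algebraically is governed by its Lie algebra. The analytic set $Z\cap\Fl^{b-\adm}$ then contains a $G(\Qp)$-orbit, which is open in the $p$-adic topology of the $\Qp$-points of a suitable model. Since $\Fl^{b-\adm}$ is connected and open, $Z$ contains an open set and hence everything. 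I will phrase this carefully via the identity theorem on irreducible smooth rigid spaces.

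\emph{Step 3: (1)$\Rightarrow$(3).} Suppose $b$ is not basic, with nontrivial Newton cocharacter $\nu_b$. A special point with torus $T$ forces $b$ to reduce to $T$, so $\nu_b$ factors through $T$ up to conjugacy and $T$ lies in the centralizer Levi $M_b$ (over $\Qp$, up to $J_b(\Qp)$-conjugation). The corresponding filtration then is compatible with the parabolic $P_b$ attached to $\nu_b$, i.e. it lies in the locus where the slope filtration of $\mathcal{E}_b$ splits as graded pieces with each piece itself admissible. This is exactly the Harder--Narasimhan-split locus, i.e. the union of $J_b(\Qp)$-translates of the image of $\Fl^{M_b}$. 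Each such translate has strictly smaller dimension since $M_b\neq G$ (the Levi flag variety $M_b\cdot\mu$ is a proper closed subvariety). I will then show that the union over $J_b(\Qp)$ is contained in a proper Zariski closed subset. It is not a finite union, however, so this is the main obstacle. My first attempt will use the non-basic slope filtration directly: on the admissible locus the $\Qp$-local system $\mathcal{E}_b\times_{x}$ has a canonical sub-object coming from the smallest-slope piece, and a CM point must have this sub-object be a sub-Hodge structure. This imposes a closed analytic condition, namely the vanishing of a section of a map of vector bundles on $\Fl^{b-\adm}$ expressing the incompatibility of the Hodge filtration with the $\varphi$-stable slope filtration of the isocrystal. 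That condition is nontrivial by a dimension count, which would give the contradiction with (1).
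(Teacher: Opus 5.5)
Your overall architecture is the paper's: a formal reduction between (1) and (2), orbit density for (3)$\Rightarrow$(1), and containment in a Levi-type locus for (1)$\Rightarrow$(3). However, Steps 2 and 3 do not work as written.

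\textbf{Step 2 uses the wrong group.} The group acting on $\Fl^{b-\adm}_{[\mu^{-1}]}$ through automorphisms of the universal admissible pair is $G_b(\mathbb{Q}_p)$, the automorphism group of $\mc{E}_b^{\gr}$. By contrast, $G(\mathbb{Q}_p)$ is the structure group of the torsor $\mc{M}_{b,[\mu]}\to\Fl^{b-\adm}_{[\mu^{-1}]}$, so it acts trivially on the base. It acts on the Hodge--Tate side, and by Hecke correspondences on $\mc{M}_{b,[\mu],K}$. Left translation by $g\in G(\mathbb{Q}_p)\subseteq G(\breve{\mathbb{Q}}_p)$ on the ambient flag variety has nothing to do with the moduli problem. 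It preserves neither the special points nor even $\Fl^{b-\adm}_{[\mu^{-1}]}$ (for instance, $D^\times$ does not preserve Drinfeld's $\Omega^{n-1}$). Also, replacing $T$ by a $G(\mathbb{Q}_p)$-conjugate produces exactly the same special points. The correct description is this: the special point attached to $(T,\mu_T)$ is $g\cdot x_{\mu_T}$, where $g\in G(\breve{\mathbb{Q}}_p)$ satisfies $gb_T\sigma(g)^{-1}=b$. The only freedom in $g$ is left multiplication by $G_b(\mathbb{Q}_p)$, so the special points form finitely many $G_b(\mathbb{Q}_p)$-orbits (\cref{prop.special-points-loc}).

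Your density argument goes through once you run it with this group. For $b$ basic, $G_b$ is an inner form of $G$, so $\Lie\, G_b(\mathbb{Q}_p)$ spans $\mf{g}_{\breve{\mathbb{Q}}_p}$. Hence a rigid function vanishing on a $G_b(\mathbb{Q}_p)$-orbit has all partial derivatives zero at a point of that orbit. One special point (from an elliptic torus) together with connectedness of $\Fl^{b-\adm}_{[\mu^{-1}]}$ (Gleason--Louren\c{c}o) then gives density; this is the paper's proof.

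\textbf{Step 1 has a false justification.} The claim that $\pi$ is finite on components is false: a Lubin--Tate component has infinite Gross--Hopkins fibres. The reduction can be repaired. Replace $Z$ by the set of points whose whole $\pi$-fibre lies in $Z$; this is Zariski closed, via the finite \'etale Hecke correspondences. Then use \'etale descent along $\pi$. It does not work as stated.

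\textbf{In Step 3 the obstacle you single out does not exist, and the substitute argument leaves the essential point unproved.} Elements of $J_b(\mathbb{Q}_p)=G_b(\mathbb{Q}_p)$ are automorphisms of the isocrystal. Hence they commute with $\nu_b$ and lie in $M_b(\breve{\mathbb{Q}}_p)$, so all $J_b(\mathbb{Q}_p)$-translates of an $M_b$-orbit in $\Fl_{[\mu^{-1}]}$ coincide. Concretely, for a special point $y=g\cdot x_{\mu_T}$ as above, $\nu_b=g\nu_{b_T}g^{-1}$ with $\nu_{b_T}$ factoring through $T$, so $\nu_b$ fixes $y$. Thus all special points lie in the $\nu_b$-fixed locus, a finite union of closed $M_b$-orbits and hence Zariski closed. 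The paper phrases this as follows: after arranging $b\in T(\breve{\mathbb{Q}}_p)$, $\nu_b$ is defined over $\mathbb{Q}_p$ and $G_b$ is an inner form of its centralizer $M$. Then each of the finitely many orbits lies in the special subvariety $\Fl^{b-\adm}_{[\mu_M^{-1}]}$.

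What must then be proved is that this locus is proper when $b$ is not basic. You only assert it, as ``strictly smaller dimension since $M_b\neq G$'' and ``nontrivial by a dimension count''. It does not follow from $M_b\neq G$ alone. That condition is compatible with $M_b$ acting transitively on $\Fl_{[\mu^{-1}]}$: take $\nu_b$ non-central only on a simple factor where $\mu$ is central. The paper reduces to simple factors of $G^{\mathrm{ad}}$ and argues in two cases:
\begin{itemize}
\item When $\mu$ is central, $B(G,[\mu^{-1}])$ contains only the basic class, so $M_b=G$ on that factor.
\item Otherwise, equality of dimensions would force the Levi $M_b$ to contain the highest root, so again $M_b=G$.
\end{itemize}
Your minimal-slope sub-object criterion needs exactly this input. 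For a careless choice of representation it is even vacuous: for a product with a basic factor, the minimal-slope piece can be that whole factor, which is always a sub-object.
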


The main point is to obtain the equivalence between (1) and (3). 
To establish it, we first show in \cref{prop.special-points-loc} that the special points in the admissible locus are contained in a finite set of $G_b(\mbb{Q}_p)$-orbits. In the basic case, any such orbit is dense, so the result follows once one has the existence of a single special point, which is handled in \cref{lemma.existence-special-point}. On the other hand, in the non-basic case, $G_b(\mbb{Q}_p)$ is contained in a proper Levi subgroup $M$ of $G_{\Qpbreve}$ and each of the finitely many orbits that appears is contained within a strict special subvariety associated to a $\mathbb{Q}_p$-form of $M$.  

\begin{remark}The density of special points was first anticipated in \cite[Remark 4.22 and Remark 5.5-(ii)]{rapoport-viehmann}, but without the basic hypothesis, however, it is easy to see that density fails already in the first non-basic example of the Serre-Tate disk (\cref{example.serre-tate}). Via the structural analogy developed in \cite{howe-klevdal:ap-pahsI}, it is not surprising that only the basic case behaves similarly to the classical theory of complex Shimura varieties: the basic case is the part of the theory related to $p$-adic Hodge structures and thus to the analogy with classical complex Hodge theory.
\end{remark}

Even in the non-basic case, these $M$-orbits are basic special subvarieties in which the special points are dense (\cref{remark.special-subvarieties}), so that the Zariski closure of all special points is a union of basic special subvarieties. Thus, a natural analog of the Andr\'{e}--Oort conjecture would be to guess that any irreducible component of the rigid Zariski closure of a collection of special points is a basic special subvariety. We exhibit a counter-example to this guess (\cref{example.andre-oort-counterexample}) already in the first non-trivial case by constructing a curve in the two dimensional height three Lubin--Tate space that contains a Zariski dense sets of special points but is not a special subvariety. The key point is that the special points in Lubin--Tate space can be characterized by a rationality property on their Hodge/Grothendieck-Messing/Gross-Hopkins period. This counterexample is robust and can be generalized to many other cases. 

\subsection{An Ax--Schanuel conjecture} In \cref{s.ax-schanuel}, we formulate a bi-analytic Ax--Schanuel type conjecture for infinite level local Shimura varieties (\cref{conj.ax-schanuel}). As explained in \cref{remark.ax-lindemann-relation}, we expect this conjecture to partially refine the bi-analytic Ax--Lindemann theorem \cite[Theorem A]{howe-klevdal:ap-pahsII}.

In complex bi-algebraic geometry as it arises in the study of complex variations of Hodge structure, an Ax--Schanuel theorem takes the following form: one embeds a complex analytic manifold $M$ into the analytification of an algebraic variety $V$, and then the Ax--Schanuel theorem uses special subvarieties of $M$ to explain exceptional intersections (i.e. intersections of higher than expected dimension) of $M$ with algebraic subvarieties of $V$. When $V=V_1 \times V_2$ and one of the maps $M \rightarrow V_i$ is a local isomorphism, such a result refines an Ax--Lindemann type theorem for the resulting bi-algebraic structure on $M$ in the sense of \cite{klingler-ullmo-yafaev:bialgebraic}. 

In order to make such a conjecture in our setting, we take our analog of the embedding of $M$ into $V$ to be the product of the Hodge and Hodge-Tate period maps on a basic infinite level local Shimura variety. Any infinite level local Shimura variety is expected to be a (pre)perfectoid space and this is known in many cases; in particular these spaces are very far from being rigid analytic. The main obstacle is thus to understand what a ``generic" intersection of such a space with a rigid analytic subvariety of the product of period domains should look like, so that one can give a precise definition of an exceptional (i.e. non-generic) intersection. To accomplish this, we use the Banach--Colmez Tangent Bundles computed via the theory of inscription introduced in \cite{Howe.InscriptionTwistorsAndPAdicPeriods} to make sense of a generically transverse intersection. We phrase the resulting conjecture (\cref{conj.ax-schanuel}) and related computations and discussion in language that is independent of the inscribed formalism in order to keep it more accessible. 

\subsection{Outline} 
In \cref{s.prelims} we recall some preliminaries that will be used throughout the paper. In \cref{s.admissible-pairs} we define our general categories of admissible pairs and $p$-adic Hodge structures and establish their basic properties. In particular, we prove the equivalence between basic admissible pairs and $p$-adic Hodge structures, compare with the neutral objects of \cite{howe-klevdal:ap-pahsII}, and define various period maps associated to a general $G$-admissible pair. In \cref{s.variations-padic-hs} we define admissible pairs of potential good reduction and variations of $p$-adic Hodge structure then discuss the relation between them; the main goal of the section is to explain and partially justify some geometric conjectures about these objects and to connect them to potential generalizations of the geometric transcendence results of \cite{howe-klevdal:ap-pahsII}. In \cref{s.special-points}, we prove our results on the density of special points and give a counter-example to the na\"{i}ve generalization of the Andr\'{e}-Oort conjecture to local Shimura varieties. Finally, in \cref{s.ax-schanuel} we formulate our Ax--Schanuel conjecture.

\subsection*{Acknowledgements} During the preparation of this work, Sean Howe was supported by the NSF through grant DMS-2201112 and DMS-2501816.  We thank Banff International Research Station
for hosting both authors for a Research in Teams project that led, in particular, to the formulation of \cref{conj.ax-schanuel}.

\section{Preliminaries}\label{s.prelims}

In this section we fix some notation and recall some preliminaries on $p$-adic geometry, local systems, and bundles on the Fargues--Fontaine curve. 

\subsection{Notation}

\subsubsection{}
Throughout this paper, $p$ is a fixed prime. We say a field $L$ is a non-archimedean field if it is complete with respect to a non-trivial non-archimedean absolute value and its residue field has characteristic $p$. We write $\mc{O}_L$ for the ring of integers of any non-archimedean field $L$, i.e. the elements of absolute value at most $1$. A $p$-adic field is a non-archimedean field containing $\mathbb{Q}_p$ that is complete with respect to a discrete valuation extending the $p$-adic absolute value on $\mathbb{Q}_p$ and has perfect residue field. We fix an algebraic closure $\ol{\mbb{F}}_p/\mathbb{F}_p$ and let $\breve{\QQ}_p = W(\ol{\mbb{F}}_p)[p^{-1}]$. We write $\sigma$ for the automorphism of $\breve{\QQ}_p$ induced by the $p$-power Frobenius on $\ol{\mbb{F}}_p$. If $A$ is a sheafy Huber ring, we write $\Spa\+ A$ for the adic space $\Spa(A, A^\circ)$. 

\subsubsection{}
If $G$ is an affine group scheme over a field $E$, we write $\Rep\+ G$ for the Tannakian category of algebraic representations of $G$ on finite dimensional $E$-vector spaces, and $\omega_\std \colon \Rep\+ G \to \Vect_E$ for the forgetful functor to $E$-vector spaces.

\subsubsection{Isocrystals}
We recall some results from \cite{kottwitz:isocrystals}. Let $\Isoc$ be the category of isocrystals over $\ol{\mbb{F}}_p$ (denoted $\Kt_{\QQ_p}$ in \cite{howe-klevdal:ap-pahsI}). The objects are given by pairs $(W, \varphi_W)$ consisting of a finite dimensional $\breve{\QQ}_p$-vector space and a $\sigma$-semilinear isomorphism $\varphi_W \colon W \xrightarrow{\sim} W$. The category $\Isoc$ is a semisimple (non-neutral) Tannakian category over $\QQ_p$, and the set of isomorphism classes of objects of $\Isoc$ is in bijection with $\mathbb{Q}$ by sending $\lambda = \frac{a}{b}$ with $b>0$ and $\gcd(a,b)=1$ to $D_\lambda = \breve{\QQ}_p[\pi]/(\pi^b - p^a)$ with $\varphi_{D_\lambda} = \pi \sigma$. 

If $G$ is a linear algebraic group over $\QQ_p$, a $G$-isocrystal is an exact tensor functor $\Rep\+ G \to \Isoc$. For $b \in G(\breve{\QQ}_p)$, we let $W_b \colon \Rep\+ G \to \Isoc$ be the $G$-isocrystal defined by $(V, \rho)\mapsto (V_{\breve{\QQ}_p}, \rho(b) \sigma)$. When $G$ is connected, any $G$-isocrystal is isomorphic to $W_b$ for some $b \in G(\breve{\QQ}_p)$, and the set of isomorphism classes of $G$-isocrystals is given by the Kottwitz set $B(G)$ of $\sigma$-conjugacy classes of $G(\breve{\QQ}_p)$. 

\subsection{$p$-adic geometry}
Let $\Perfd_{\QQ_p}$ be the category of perfectoid spaces over\footnote{This is equivalent to the category of perfectoid spaces in characteristic $p$ equipped with a structure morphism to $\Spd\+ \QQ_p$ \cite[Theorem 9.4.4]{scholze:berkeley}, so we could equally well work with perfectoid spaces in characteristic $p$ throughout this paper.} $\QQ_p$, which we view as a site equipped with the $v$-topology \cite[Definition 8.1]{scholze:etale-cohomology-of-diamonds}. A $v$-sheaf is a sheaf on the site $\Perfd_{\QQ_p}$. Every perfectoid space is a $v$-sheaf \cite[Theorem 8.7]{scholze:etale-cohomology-of-diamonds}, and a $v$-sheaf is said to be small if it admits a surjection from a perfectoid space \cite[Definition 12.1]{scholze:etale-cohomology-of-diamonds}. Any small $v$-sheaf $S$ admits an underlying topological space $|S|$ \cite[Definition 12.8]{scholze:etale-cohomology-of-diamonds} and there is a notion of (locally) spatial $v$-sheaves \cite[Definition 12.12]{scholze:etale-cohomology-of-diamonds}. If a $v$-sheaf $S$ is (locally) spatial, then $|S|$ is a (locally) spectral space \cite[Proposition 12.13]{scholze:etale-cohomology-of-diamonds}. A diamond\footnote{This is not the original definition of a diamond (c.f.\ \cite[Definition 11.1]{scholze:etale-cohomology-of-diamonds}) but it is equivalent \cite[Proposition 11.5]{scholze:etale-cohomology-of-diamonds}. We prefer to use this definition to avoid talking about the pro-\'etale topology on $\Perfd_{\QQ_p}$ and $v$-sheaves, since we will use the distinct notion the pro-\'etale site of a rigid analytic variety introduced in \cite{scholze:p-adic-ht}.} $S$ is a $v$-sheaf for which there exists a surjective quasi-pro-\'etale morphism from a perfectoid space to $S$. For any small $v$-sheaf $S$, we have the sites $S_v, S_\an$ consisting respectively of all $v$-sheaves over $S$ and all open $v$-subsheaves of $S$.

When $X$ is an adic space over $\Spa(\QQ_p)$, we write $X^\diamond$ for the functor on $\Perfd_{\mathbb{Q}_p}$, $X^\diamond(S) = \Hom(S, X)$ (morphisms in the category of adic spaces over $\QQ_p$), and we write $\Spd(A, A^+) := \Spa(A, A^+)^\diamond$. Because $X$ is an analytic adic space over $\QQ_p$, $X^\diamond$ is a locally spatial diamond \cite[Lemma 15.6]{scholze:etale-cohomology-of-diamonds}.

\subsubsection{Rigid analytic varieties}
If $L/\QQ_p$ is a non-archimedean field, a rigid analytic variety over $L$ is an adic space over $\Spa(L)$ that is locally topologically finite type, i.e.\ has an open cover by affinoids of the form $\Spa(A, A^\circ)$ for $A$ a topologically finite type $L$-algebra.

For a rigid analytic variety $X$ over $L$, we let $X_\et, X_\an$ be the \'etale site and analytic site of $X$. We write $X_{\proet}$ for the pro-\'etale site of \cite{scholze:p-adic-ht}, this is the full subcategory of pro-objects of $X_\et$ where the objects $\tilde{X} = \varprojlim_i X_i$ are formal inverse limits of small cofiltered systems $(X_i)_{i \in I}$ of representable objects $X_i \in X_\et$ such that the transition maps $X_i \to X_j$ are finite \'etale for $i > j$ large. The pro-system $(X_i)_{i \in I}$ is called a pro-\'etale presentation. Given an object $\tilde{X} = (X_i)_{i \in I} \in X_\proet$, we let $|\tilde{X}| = \varprojlim_{i \in I} |X_i|$ and $\tilde{X}^\diamond = \varprojlim_{i \in I} X_i^\diamond$. 

The functor $X \mapsto X^\diamond$ from rigid analytic varieties over $L$ to diamonds over $\Spd\+ L$ is fully faithful once restricted to the full subcategory of seminormal rigid analytic varieties \cite[Proposition 10.2.3]{scholze:berkeley}. 

\subsection{Local systems}
If $\Lambda$ is a topological ring, we let $\ul{\Lambda}$ be the $v$-sheaf on $\Perfd_{\QQ_p}$ whose value on a perfectoid space $U$ is $\ul{\Lambda}(U) = C^0(|U|, \Lambda)$. We let $\mc{O}$ be the structure sheaf whose value on a perfectoid space $U/\Spa(\mathbb{Q}_p)$ is $ \mc{O}_U(U)$, which is a $v$-sheaf by \cite[Theorem 8.7]{scholze:etale-cohomology-of-diamonds}. The de Rham period sheaf $\mbb{B}_\dR^+$ is the unique sheaf on $\Perfd_{\QQ_p}$ whose value on an affinoid perfectoid space $\Spa(A, A^+)$ is the completion of $W(A^{+\flat})[p^{-1}]$ along the kernel of the surjection $W(A^{+\flat})[p^{-1}] \to A$. (That this is a $v$-sheaf follows from \cite[Example 15.2.9-(5)]{scholze:berkeley}.) By construction, there is a canonical surjection $\theta \colon \mbb{B}_\dR^+ \to \mc{O}$, and the kernel is locally principal. We write $\mbb{B}_\dR = \mbb{B}_\dR^+[\ker(\theta)^{-1}]$. If $L$ is a $p$-adic field, there is a canonical inclusion $\ul{L} \hookrightarrow \mbb{B}^+_{\dR, \Spd\+ L}$. 

For $R \in \{\ul{\Lambda}, \mc{O}, \mbb{B}_\dR^{(+)} \}$, and $S$ a small $v$-sheaf, we write $R_S$ for the restriction of $R$ to $\Perfd_{S}$ (or, by a slight abuse of notation, just $R$ if $S$ is clear from context). An $R$-local system on a perfectoid space $U$ is a sheaf $\mbb{M}$ of $R_U$-modules on $U_v$ such that there is a $v$-cover $\{U_i \to U\}$ such that $\mbb{M}|_{U_i}$ is isomorphic (as a sheaf of $R_{U_i}$-modules) to $R_{U_i}^{n_i}$ for some non-negative integer $n_i$. The assignment $U \mapsto \Loc_R(U)$, the category of $R$-local systems on $U$, is evidently a $v$-stack, and we can thus evaluate $\Loc_R$ on any small $v$-sheaf $S$. The resulting category $\Loc_R(S)$ is an exact $R(S)$-linear tensor category. 

\subsubsection{$\mbb{B}_\dR^+$-lattices} If $S$ is a small $v$-sheaf, $R$ a subsheaf of $\mbb{B}_{\dR, S}$, and $\mbb{M} \in \Loc_R(S)$ then a $\mbb{B}_\dR^+$-lattice on $\mbb{M}$ is a $\mbb{B}_\dR^+$-submodule $\mc{L} \subseteq (\mbb{M} \otimes_R \mbb{B}_{\dR, S})$ such that $\mc{L} \in \Loc_{\mbb{B}_\dR^+}(S)$ and the natural map $\mc{L} \otimes_{\mbb{B}_\dR^+} \mbb{B}_{\dR, S} \to \mbb{M} \otimes_R \mbb{B}_{\dR, S}$ is an isomorphism. The most important example for us is when $R$ is the canonical inclusion of $\ul{L}$ into $\mbb{B}_\dR^{(+)}$ on $\Perfd_L$ whenever $L$ is a $p$-adic field (usually just $L = \QQ_p$). 

\subsubsection{$G$-local systems and torsors}
Fix $L$ a non-archimedean extension of $\QQ_p$ and $G$ an affine algebraic group over $L$. Let $R$ be one of the sheaves $\ul{\Lambda}$ for $\Lambda$ a topological $L$-algebra, $\mc{O},$ or $\mbb{B}_\dR^{(+)}$, which we consider as a sheaf on $\Perfd_L$. In the case, $R = \mbb{B}_\dR^{(+)}$, we make the additional assumption that $L$ is a $p$-adic field  which ensures that $R$ is a sheaf of $L$-algebras. 

Let $S$ be a small $v$-sheaf over $\Spd\+ \QQ_p$. A $G(R)$-local system on $S$ is an exact $L$-linear tensor functor $\Rep\+ G \to \Loc_R(S)$. A $G(R)$-torsor on $S$ is a sheaf $\mc{T}$ on $\Perfd_L/S$ together with a right action of the $v$-sheaf $G(R)$ (defined by $U \mapsto G(R(U))$) such that $v$-locally, $\mc{T}$ is isomorphic to $G(R)$ with right multiplication action. 

\subsection{Bundles on the Fargues--Fontaine curve}

\subsubsection{Vector bundles}
For $T$ a perfectoid space in characteristic $p$, the relative Fargues--Fontaine curve $\FF_{T}$ (denoted as $X_{T}$ in \cite{fargues-scholze:geometrization}) is an analytic adic space over $\QQ_p$, constructed as a quotient $Y_{T}/\varphi^\ZZ$ of a certain adic space $Y_{T}$ by an automorphism $\varphi$ \cite[Chapter II]{fargues-scholze:geometrization}. If $T = P^\flat$ is the tilt of a perfectoid space $P$ over $\QQ_p$, then $P$ gives distinguished point $\infty_P \in \FF_{P^\flat}(P)$ presenting $P$ as a Cartier divisor \cite[Proposition II.1.4]{fargues-scholze:geometrization} in $\FF_{P^\flat}$. We write $\FF_P$ for $\FF_{P^\flat}$ with the specified point $\infty_P$. We let $\Vect_\FF$ be the $v$-stack on $\Perfd_{\QQ_p}$ sending a perfectoid space $P$ to the category of vector bundles on $\FF_{P^\flat}$ (c.f.\ \cite[Proposition II.2.1]{fargues-scholze:geometrization}). When $P=\Spa(C, C^+)$ for $C$ an algebraically closed perfectoid field, there is a well defined notion of degree and slope on $\Vect_\FF(\Spa(C, C^+))$ leading to Harder--Narasimhan slope filtrations and the notion of semistable vector bundles \cite[Chapter II \S 2.4]{fargues-scholze:geometrization}. For a general perfectoid space $P$ and $\mc{V} \in \Vect_\FF(P)$, the function taking a geometric point $x \colon \Spa\+ C \to P$ to the Harder--Narasimhan polygon of $x^\ast \mc{V}$ is upper semicontinuous \cite[Theorem II.2.19]{fargues-scholze:geometrization}. If the Harder--Narasimhan polygon is constant, there is a global Harder--Narasimhan filtration, i.e.\ there is a decreasing, exhaustive, separated $\QQ$-filtration $\mc{V}^{\geq \lambda}, \lambda \in \QQ$ by subobjects in $\Vect_\FF(P)$, that is functorial and uniquely characterized by the fact that $\mc{V}^{\lambda} := \mc{V}^{\geq \lambda}/\mc{V}^{> \lambda}$ is semistable of slope $\lambda$ after pullback to every geometric point \cite[Theorem II.2.19]{fargues-scholze:geometrization}. For $\lambda in \mathbb{Q}$, we write $\Vect_\FF^\lambda(P)$ for the full subcategory of vector bundles on $\FF_{P^\flat}$ with constant Harder--Narasimhan polygon whose slope filtration is concentrated in degree $\lambda$ (and say these vector bundles are semistable of slope $\lambda$). The assignment $P \mapsto \Vect^\lambda_\FF(P)$ is visibly a $v$-substack of $\Vect_\FF$, so it makes sense to evaluate $\Vect_\FF^\lambda(S)$ for any small $v$-sheaf $S/\Spd\+ \QQ_p$. 

For $P$ a perfectoid space over $\breve{\QQ}_p$, there is a faithful functor 
\begin{equation}\label{eq.isoc-to-vb}
\Isoc \to \Vect_\FF(P), \qquad W \mapsto \mc{E}(W) := (W \otimes_{\breve{\QQ}_p} Y_P)/(\varphi_W \otimes \varphi)^\ZZ. 
\end{equation} 
When $P=\Spa(C, C^+)$ for $C$ an algebraically closed perfectoid field, the map \eqref{eq.isoc-to-vb} is bijective on isomorphism classes, and restricts to an equivalence of categories between the category of $D_{-\lambda}$-isotypic isocrystals and vector bundles on $\FF_{C}$ that are semistable of slope $\lambda$ \cite[Theorem II.2.14]{fargues-scholze:geometrization}. 

We let $\Vect_{\FF}^{\HNsplit}$ be the $v$-stack sending a perfectoid space $P/\Spa \mathbb{Q}_p$ to the category of vector bundles on $\FF_{P^\flat}$ with a splitting of the Harder--Narasimhan filtration. Concretely, for quasi-compact $P$, $\Vect_\FF^{\HNsplit}(P) = \bigoplus_{\lambda \in \QQ} \Vect_\FF^\lambda(P)$ is the category of $\QQ$-graded vector bundles $\mc{V} = \oplus_{\lambda \in \QQ} \mc{V}^\lambda$ such that each $\mc{V}^\lambda$ is semistable of slope $\lambda$. The functor \eqref{eq.isoc-to-vb} naturally factors through $\Vect_\FF^{\HNsplit}(P)$, and yields an equivalence $\Isoc \xrightarrow{\sim} \Vect_\FF^{\HNsplit}(\Spa(C, C^+))$ when $C$ is an algebraically closed perfectoid field.

\subsubsection{Banach--Colmez spaces}
Let $P$ be a perfectoid space over $\QQ_p$, and $\mc{E}$ a vector bundle on $\FF_{P}$. The Banach--Colmez space $\BC(\mc{E})$ is a $\ul{\mathbb{Q}_p}$-module on $\Perfd_P$ whose value on a perfectoid space $U$ over $P$ is $H^0(\FF_U, \mc{E}|_{\FF_U})$ \cite[Proposition II.2.16]{fargues-scholze:geometrization}. We also define $\BC(\mc{E}[1])$ as the $v$-sheafification of $U \mapsto H^1(\FF_U, \mc{E}|_{\FF_U})$ on $\Perfd_P$, which is also a $\ul{\mbb{Q}_p}$-module over $P$. The assignments $\mc{E} \mapsto \BC(\mc{E})$ and $\BC(\mc{E}[1])$ are morphisms of $v$-stacks from $\Vect_{\FF}$ to the stack of $\ul{\mbb{Q}_p}$-modules.  

For a small $v$-sheaf $S$ over $\Spd\+ \QQ_p$, the functor $\Vect_\FF^0(S) \to \Loc_{\QQ_p}(S)$ sending $\mc{E} \mapsto \BC(\mc{E})$ is an equivalence of categories. A quasi-inverse functor sends a local system $V \in \Loc_{\QQ_p}(S)$ to the element $V \otimes_{\ul{\QQ_p}} \mc{O}_\FF \in \Vect_\FF(S)$ \cite[Corollary II.2.20]{fargues-scholze:geometrization}.

\subsubsection{$G$-bundles}

Let $G$ be a connected linear algebraic group over $\QQ_p$, and $S$ a small $v$-sheaf over $\QQ_p$. A $G$-bundle on $\FF_S$ is an exact $\otimes$-functor $\mc{E} \colon \Rep\+ G \to \Vect_\FF(S)$, and an HN-splitting consists of a factorization of $\mc{E}$ through an exact tensor functor $\mc{E}^\gr \colon \Rep\+ G \to \Vect_\FF^{\HNsplit}(S)$. Given $b \in G(\breve{\QQ}_p)$, we get a $G$-bundle $\mc{E}_b$ given by the composition $W_b \colon \Rep\+ G \to \Isoc$ and the functor \eqref{eq.isoc-to-vb}. The $G$-bundle $\mc{E}_b$ naturally is HN-split, and we write $\mc{E}_b^\gr$ when we want to consider it as a HN-split $G$-bundle rather than a $G$-bundle (i.e. valued in $\Vect^{\HNsplit}_\FF$ versus $\Vect_\FF$). For example, the sheaves of automorphisms $\ul{\Aut}(\mc{E}_b^\gr)$ and $\ul{\Aut}(\mc{E}_b)$ are different whenever $b$ is not basic: the former is   $\ul{G_b(\QQ_p)}$ (where $G_b$ is the automorphism group of of the $G$-isocrystal $W_b$), a proper subsheaf of the latter. 

The identification $\ul{\Aut}(\mc{E}_b^\gr) = \ul{G_b(\QQ_p)}$ gives another way of constructing HN-split $G$-bundles: if $\mc{T}$ is a $\ul{G_b(\QQ_p)}$-torsor on $S$, then $\mc{T} \times^{\ul{G_b(\QQ_p)}} \mc{E}_b^\gr$ is the $G$-bundle whose value on $V \in \Rep\+ G$ is the vector bundle given by descending $\mc{E}_b^\gr(V)|_{\FF_{\mc{T}}}$ by the diagonal action of $\ul{G_b(\QQ_p)}$. 

Finally, we write $\mr{Bun}_G$ (resp.\ $\mr{Bun}_G^{\HNsplit}$) for the $v$-stack over $\Spd\, \overline{\mathbb{F}}_p$ that sends a perfectoid space $P/\overline{\mathbb{F}}_p$ to the groupoid of $G$-bundles (resp. $G$-bundles with an HN-splitting) on the relative Fargues--Fontaine curve $\FF_P$. The following theorem gives a precise description of $\mr{Bun}_G^{\HNsplit}$ (which we will principally use after base change to $\Spd\+ \Qpbreve$). 

\begin{theorem}[Proposition III.4.7 \cite{fargues-scholze:geometrization}]\label{thm:bun-HN-split}
Let $G$ be a connected linear algebraic group\footnote{Proposition III.3.7 in \cite{fargues-scholze:geometrization} is stated only for $G$ reductive, but the proof works for any connected linear algebraic group.} over $\QQ_p$. The natural map 
    \[ \bigsqcup_{[b] \in B(G)} [\ast/\ul{G_b(\QQ_p)}] \to \mr{Bun}_G^{\HNsplit} \]
sending a $\ul{G_b(\QQ_p)}$-torsor $\mc{T}$ to the HN-split $G$-bundle $\mc{T} \times^{\ul{G_b(\QQ_p)}} \mc{E}_b^\gr$ is an isomorphism of $v$-stacks over $\Spd\, \overline{\mathbb{F}}_p$. A quasi-inverse functor sends an HN-split $G$-bundle $\mc{E}^{\gr}$ whose underlying $G$-bundle is locally isomorphic to $\mc{E}_b$ to the sheaf of isomorphisms of HN-split $G$-bundles
    \[ \ul{\Isom}^\otimes(\mc{E}^\gr_b, \mc{E}^\gr). \]
\end{theorem}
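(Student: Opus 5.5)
We prove that the map is fully faithful and that it is essentially surjective $v$-locally. Both are $v$-local statements, so since both sides are $v$-stacks it suffices to check them on perfectoid spaces $P/\overline{\mathbb{F}}_p$. The inverse functor in the statement is then the one we construct.

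\emph{Full faithfulness.} For $\ul{G_b(\QQ_p)}$-torsors $\mc{T},\mc{T}'$ over $P$, we must show
\[ \ul{\Isom}(\mc{T},\mc{T}') \to \ul{\Isom}^\otimes(\mc{T}\times^{\ul{G_b(\QQ_p)}}\mc{E}_b^\gr,\ \mc{T}'\times^{\ul{G_b(\QQ_p)}}\mc{E}_b^\gr) \]
is an isomorphism. This is $v$-local, so we may trivialize both torsors. The claim then reduces to the identification $\ul{\Aut}(\mc{E}_b^\gr)=\ul{G_b(\QQ_p)}$ recalled above.

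To justify that identification, we reduce by Tannakian formalism to the following statement on $\Vect_\FF^{\HNsplit}$. An automorphism of the graded bundle $\mc{E}(W)$ preserving each graded piece amounts to automorphisms of the semistable pieces $\mc{E}(W)^\lambda$. For semistable bundles of slope $\lambda$ coming from isocrystals, the map
\[ \underline{\Hom}_{\Isoc}(W,W') \to \underline{\Hom}(\mc{E}(W),\mc{E}(W')) \]
is an isomorphism of $v$-sheaves. Indeed, $\underline{\Hom}(\mc{E}(W),\mc{E}(W'))=\BC(\mc{E}(W^\vee\otimes W'))$, and $W^\vee\otimes W'$ is isoclinic of slope $0$. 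By \cite[Corollary II.2.20]{fargues-scholze:geometrization}, the Banach--Colmez space of a slope-$0$ bundle is the pro-étale local system $\ul{(W^\vee\otimes W')^{\varphi=1}}$. Its sections are locally constant maps to $\Hom_{\Isoc}(W,W')$.

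In the graded setting there are no cross terms between distinct slopes, because morphisms must respect the grading. Hence the automorphism sheaf of the HN-split $G$-bundle $\mc{E}_b^\gr$ is $\ul{\Aut^\otimes(W_b)}=\ul{G_b(\QQ_p)}$. Note that this works even when $G$ is only a connected linear algebraic group: nothing here uses reductivity.

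\emph{Essential surjectivity.} Let $\mc{E}^\gr$ be an HN-split $G$-bundle on $\FF_P$. We must show that $v$-locally on $P$ it is isomorphic to $\mc{E}_b^\gr$ for some $b$, with $[b]$ locally constant.

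First, each geometric fiber is classified by \cite[Theorem II.2.14]{fargues-scholze:geometrization}. Over $\Spa(C,C^+)$, the functor $\Isoc\to\Vect_\FF^{\HNsplit}$ is an equivalence of tensor categories. So $\mc{E}^\gr$ composed with this equivalence gives a $G$-isocrystal, which is isomorphic to some $W_b$ by connectedness of $G$ (Kottwitz).

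Second, we need local constancy of $[b]$. On a connected component of $P$ the HN polygons of all representations $\mc{E}^\gr(V)$ are constant. They are locally constant because each graded piece has constant semistable type, and degrees and ranks are locally constant. This pins down the Newton point, and the Kottwitz invariant is locally constant as well. Together these determine $[b]$, since $B(G)\hookrightarrow \mc{N}(G)\times\pi_1(G)_\Gamma$. For non-reductive $G$ we instead pass to the reductive quotient, whose kernel is unipotent and has $H^1=0$, so $B(G)=B(G^{\mr{red}})$.

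Third, fix such $b$ and consider $\mc{T}=\ul{\Isom}^\otimes(\mc{E}_b^\gr,\mc{E}^\gr)$. It is a sheaf with a free right action of $\ul{G_b(\QQ_p)}$, and it is nonempty on geometric points by the first step. We must show it is $v$-locally nonempty over $P$ itself, not just on points.

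\emph{Main obstacle.} The difficulty is spreading an isomorphism from a geometric point to a $v$-neighborhood. The natural approach is to twist by slopes. For each slope $\lambda$, the bundle $\underline{\Hom}(\mc{E}(D_{-\lambda})^{\oplus},\mc{E}^\gr(V)^\lambda)$ is semistable of slope $0$, so it is the pro-étale local system of its $\BC$ by \cite[Corollary II.2.20]{fargues-scholze:geometrization}. Thus $\mc{T}$ is representable by a closed subsheaf of a product of such local systems cut out by tensor compatibilities. These are finitely many, using a faithful representation and finitely many tensors defining $G$.

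Being a locally profinite-type sheaf with nonempty fibers, $\mc{T}$ is then pro-étale locally trivial. Concretely, over a strictly totally disconnected cover, sections of the local systems are determined by values at points. A point isomorphism is a compatible tuple of fiber sections, and it extends to an open-closed neighborhood. This gives the required $v$-local section, and identifies the quasi-inverse with $\ul{\Isom}^\otimes(\mc{E}^\gr_b,\mc{E}^\gr)$.
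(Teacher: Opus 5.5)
\textbf{Overall.} Your full faithfulness argument is fine. It is the identification $\ul{\Aut}(\mc{E}_b^\gr)=\ul{G_b(\QQ_p)}$ that the paper also uses. You should add that for $[b]\neq[b']$ there are no isomorphisms $\mc{E}_b^\gr\to\mc{E}_{b'}^\gr$ over a nonempty base, which follows from your pointwise classification. The genuine gap is in the last step of essential surjectivity.

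\textbf{Comparison with the paper.} This step is precisely the point the paper does not argue by hand. It follows Fargues--Scholze's proof and, for non-reductive $G$, substitutes \cite[Theorem 4.3]{howe-klevdal:ap-pahsII} for \cite[Theorem III.2.4]{fargues-scholze:geometrization}. That cited input is what supplies the $v$-local triviality you need.

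\textbf{The gap.} Your justification is that ``a point isomorphism is a compatible tuple of fiber sections, and it extends to an open-closed neighborhood.'' This does not work.
\begin{itemize}
\item Extending a fiber isomorphism $\phi_0$ at $x_0$ constantly, in a trivialization of the graded Hom local systems, does give a graded isomorphism of the underlying vector bundles near $x_0$, because invertibility is an open condition.
\item It need not respect the tensors $s_i$ cutting out $G$. Over a strictly totally disconnected $P$ the sections $\mc{E}^\gr(s_i)$ are continuous functions of the point, not locally constant ones.
\item Example: take $G=\mathrm{O}(q)$ and $b=1$. Put on $\mc{O}^n_{\FF_P}$ the form $q\circ g(x)$, where $g\colon |P|\to \GL_n(\mathbb{Z}_p)$ is continuous but not locally constant. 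No constant extension of a fiber isometry is an isometry on any neighborhood.
\item More generally, ``closed subsheaf of a local system with nonempty fibers'' does not by itself yield local sections.
\end{itemize}

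\textbf{What is missing.} You need the smoothness of $G_b$, fed into an implicit function theorem.
\begin{enumerate}
\item Near $x_0$, fix a graded isomorphism $\phi\colon\mc{E}_b(V)\to\mc{E}^\gr(V)$ of the underlying bundles, as above.
\item Set $t(x)=\phi_x^{-1}(\mc{E}^\gr(s)_x)$. This is a continuous map into the $\QQ_p$-points of the relevant tensor space. By your pointwise classification it lies, for each $x$, in the orbit $H(\QQ_p)\cdot s$, where $H=\Aut_{\Isoc}(W_b(V))$.
\item Sections of $\mc{T}$ near $x_0$ are exactly $\phi\circ h$ for continuous $h\colon|U|\to H(\QQ_p)$ with $h(x)\cdot s=t(x)$.
\item Since $G_b=\mathrm{Stab}_H(s)$ is smooth, the orbit map $H\to H/G_b$ is smooth. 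Hence $H(\QQ_p)\to H(\QQ_p)\cdot s$ is a submersion of $p$-adic manifolds and admits local analytic sections.
\item Composing such a section with $t$ produces the required $h$.
\end{enumerate}

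\textbf{Consequence for your Step 2.} Once you have local sections of $\mc{T}$ around every point whose fiber is $\cong\mc{E}_b^\gr$, local constancy of $[b]$ is automatic. Your Step 2 then becomes unnecessary. That is just as well, since it relies on local constancy of the Kottwitz invariant, which you assert without proof and which is a nontrivial theorem in Fargues--Scholze.
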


\subsubsection{Modifications of bundles}
Let $P$ be an affinoid perfectoid space over $\QQ_p$, and let $\FF_P$ be the pointed Fargues--Fontaine curve relative to $P$. Recall that the point $\infty \colon P \to \FF_P$ realizes $P$ as a Cartier divisor and there is a natural isomorphism $\varprojlim_{n} \mc{O}_{\FF_P}/\mc{I}_\infty^n \cong \mbb{B}_\dR^+,$ where  $\mc{I}_\infty \subseteq \mc{O}_{\FF_P}$ is the ideal sheaf of $\infty$ and $\mbb{B}_\dR^+$ is restricted to $\Perfd_{/P}$. In particular, we get exact tensor functors 
    \[ \Vect_\FF(P) \to \Loc_{\mbb{B}_\dR^{(+)}}(P), \qquad \mc{V} \mapsto \mc{V} \otimes_{\mc{O}_{\FF_P}} \mbb{B}_\dR^{(+)}. \]
These are natural in $P$ and define restriction functors $\Vect_\FF \to \Loc_{\mbb{B}_\dR^{(+)}}$ of stacks over $\Perfd_{\mathbb{Q}_p}$, which for ease of notation we will denote as $(-)\boxtimes \mbb{B}_\dR^{(+)}$. 

Suppose now $S$ is a small $v$-sheaf over $\Spd\+ \QQ_p$ and we are given $\mc{E} \in \Vect_\FF(S)$, and $\mc{L} \subseteq \mc{E} \boxtimes \mbb{B}_\dR$ a $\mbb{B}_\dR^+$-lattice. We can form a new vector bundle $\mc{E}_{\mc{L}} \in \Vect_\FF(S)$ by modifying $\mc{E}$ at $\infty$ by $\mc{L}$ by Beauville-Laszlo gluing. (It suffices to define $\mc{E}_\mc{L}$ when $S$ is perfectoid, and the relevant gluing lemmas are in \cite{scholze:berkeley}; see the discussion of \cite[\S 4.3]{howe-klevdal:ap-pahsII}.) If $G$ is an algebraic group over $\QQ_p$ all of the above discussion extends to $G$-bundles on $\FF_S$: if $\mc{E} \colon \Rep\+ G \to \Vect_\FF(S)$ is a $G$-bundle, we write $\mc{E} \boxtimes \mbb{B}_\dR^{(+)}$ for the composition of $\mc{E}$ with the $\mbb{B}_\dR^{(+)}$-restriction functor. If $\mc{L} \subseteq \mc{E} \boxtimes \mbb{B}_\dR$ is a $\mbb{B}_\dR^+$-lattice, the modification of $\mc{E}$ by $\mc{L}$ is the $G$-bundle $\mc{E}_{\mc{L}}$ on $\FF_S$ whose value at $V \in \Rep\+ G$ is $\mc{E}_\mc{L}(V) = \mc{E}(V)_{\mc{L}(V)}$.

\section{Admissible pairs and $p$-adic Hodge structures}\label{s.admissible-pairs}
In this section, we give the general definition of an admissible pair and a $p$-adic Hodge structure on a small $v$-sheaf $S/\Spd\, \QQ_p$. We show that the categories of admissible pairs and $p$-adic Hodge structures satisfy $v$-descent, that there is an equivalence between basic admissible pairs on $S$ and $p$-adic Hodge structures on $S$ (extending the equivalence of \cite{howe-klevdal:ap-pahsI} when $S=\Spd\+ C$ for $C/\mathbb{Q}_p$ algebraically closed) and that they form Tannakian categories when $S$ is connected. We discuss the relationship between the admissible pairs/$p$-adic Hodge structures defined here and the neutral admissible pairs defined in \cite{howe-klevdal:ap-pahsII}. 
Finally, we discuss the lattice period morphisms (defined on certain pro-\'etale covers of $S$), and recall the definition of the type of a lattice. 

\subsection{Definitions and first properties}

\begin{definition}
    Let $S/\Spd\+ \mbb{Q}_p$ be a small $v$-sheaf.
    \begin{enumerate}
        \item A \emph{$p$-adic Hodge structure over $S$} is a $\mathbb{Q}$-graded $\mathbb{B}^+_\dR$-latticed $\mathbb{Q}_p$-local system $(V,\mc{L}_\dR)= \oplus_{\lambda \in \mathbb{Q}} (V_\lambda, \mathcal{L}_{\dR,\lambda})$ over $S$ such that, at each geometric point, each of the modified bundles $\mathcal{E}_\lambda(V):=(V_\lambda \otimes \mathcal{O}_{\FF})_{\mathcal{L}_{\dR,\lambda}}$ is semistable of slope $-\lambda/2$. A morphism of $p$-adic Hodge structures over $S$ is a morphism of $\mathbb{Q}$-graded $\mathbb{Q}_p$-local systems whose $\mathbb{B}_\dR$-linear extension sends one lattice into the other. 
        \item An \emph{admissible pair over $S$} is a pair $(\mc{E}, \mc{L}_\et)$ where $\mc{E}=\bigoplus \mc{E}_\lambda \in \Vect^{\HN}(S)$ is an HN-split vector bundle on the relative Fargues--Fontaine curve over $S$, and $\mc{L}_\et \subseteq \mc{E} \boxtimes \mbb{B}_\dR$ is a $\mbb{B}^+_\dR$-lattice such that $\mc{E}_{\mc{L}_\et}$ is semistable of slope zero at each geometric point. A morphism of admissible pairs is a morphism of $\mathrm{HN}$-split vector bundles whose $\mbb{B}_\dR$-linear extension sends one lattice into the other. We write $\AdmPair(S)$ for the rigid $\mbb{Q}_p$-linear tensor category of admissible pairs over $S$. An admissible pair $(\mc{E}, \mc{L}_\et)$ over $S$ is \emph{basic} if $\mc{L}_\et=\bigoplus \mc{L}_{\et,\lambda}$ for $\mc{L}_{\et, \lambda}:=\mc{E}_{\lambda}\boxtimes \mbb{B}_\dR \cap \mc{L}_\et$. 
    \end{enumerate}
    We write $\HS(S)$ (resp. $\AdmPair(S)$, resp. $\AdmPair^{\basic}(S)$) for the rigid $\mbb{Q}_p$-linear tensor category of $p$-adic Hodge structures (resp. admissible pairs, resp. basic admissible pairs) over $S$. 
\end{definition}

For $S/\Spd\+ \mbb{Q}_p$ a small $v$-sheaf and $s: \Spd\+ (C,C^+) \rightarrow S$ a geometric point, we write $\omega_{s}$ for the functor 
\[ \HS(S) \rightarrow \Vect_{\mbb{Q}_p},\; (V,\mc{L}_\dR)\mapsto H^0(\Spd\+(C,C^+), s^*V).\]
We also write $\omega_s$ for the functor 
\[ \AdmPair(S) \rightarrow \Vect_{\mbb{Q}_p},\; (\mc{E}, \mc{L}_\et) \mapsto H^0(\FF_C, s^*\mc{E}_{\mc{L}_\et}). \]
The doubled notation will not cause confusion because, under the natural functor from $\AdmPair(S)$ to $\mbb{B}^+_\dR$-latticed $\mbb{Q}_p$-local systems on $S$, 
\[ \omega_{\et}^\mc{L}: (\mc{E},\mc{L}_\et) \mapsto (\BC(\mc{E}_{\mc{L}_\et}), \mc{E} \boxtimes \mbb{B}^+_\dR), \]
there is a canonical identification of $\omega_s$ on $\AdmPair(S)$ with the composition of $\omega_{\et}^\mc{L}$ and the obvious extension of $\omega_s$ on $\HS(S)$ to $\mbb{B}^+_\dR$-latticed $\mbb{Q}_p$-local systems on $S$.

\begin{theorem}\label{theorem.ap-hs-properties}\hfill
\begin{enumerate}
    \item The assignments $S \mapsto \AdmPair(S)$, $S \mapsto \AdmPair^{\basic}(S)$, and $S\mapsto \HS(S)$ are $v$-stacks.
    \item For any small $v$-sheaf $S/\Spd\+\mbb{Q}_p$, the categories $\AdmPair(S)$, $\AdmPair^\basic(S)$, and $\HS(S)$ are rigid tensor categories over $\mbb{Q}_p$.
    \item The functor $\omega_{\et}^\mc{L}$ is an exact tensor functor and restricts to an equivalence
    \[ \AdmPair^{\mr{basic}}(S) \cong \HS(S) \]
    \item If $S/\Spd\+\mbb{Q}_p$ is a locally spatial diamond such that $\pi_0(S)$ is discrete, then $\AdmPair(S)$, $\AdmPair^\basic(S)$, and $\HS(S)$ are abelian categories. If $S$ is moreover connected and $s: \Spd\+(C,C^+) \rightarrow S$ is a geometric point then $\omega_s$ is a fiber functor on each of $\AdmPair(S)$, $\AdmPair^\basic(S)$, and $\HS(S)$ so that these categories are neutral Tannakian. 
\end{enumerate}
\end{theorem}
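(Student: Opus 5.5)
The plan is to reduce each part to facts about the stacks $\Loc_{\mbb{Q}_p}$, $\Loc_{\mbb{B}^+_\dR}$, $\Vect_\FF$ and $\Vect^{\HNsplit}_\FF$ recalled in \cref{s.prelims}, and to the geometric-point statements of \cite{howe-klevdal:ap-pahsI}.

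\textbf{Part (1).} An admissible pair is an HN-split vector bundle together with a $\mbb{B}^+_\dR$-sublattice of $\mc{E}\boxtimes\mbb{B}_\dR$; both data are $v$-local, since $\Vect_\FF^{\HNsplit}$ and $\Loc_{\mbb{B}^+_\dR}$ are $v$-stacks and being a lattice is a $v$-local condition. The modification $\mc{E}_{\mc{L}_\et}$ is compatible with pullback. The admissibility condition (semistable of slope zero at every geometric point) is a pointwise condition, so it descends. The basic condition is the equality $\mc{L}_\et=\bigoplus(\mc{E}_\lambda\boxtimes\mbb{B}_\dR\cap\mc{L}_\et)$ of subsheaves, which is again $v$-local. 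The same argument applies verbatim to $\HS$, using graded $\mbb{Q}_p$-local systems in place of HN-split bundles. Morphisms are morphisms of the underlying sheaves satisfying a lattice-containment condition, so they glue as well.

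\textbf{Part (2).} Tensor products, duals and unit objects are formed componentwise. The grading is additive in $\lambda$; this is consistent because tensor products of semistable bundles of slopes $\lambda,\mu$ are semistable of slope $\lambda+\mu$, and the same holds for duals. Semistability of slope zero, and of slope $-\lambda/2$ for the graded pieces, is then checked at geometric points, where it is the statement of \cite{howe-klevdal:ap-pahsI}.

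\textbf{Part (3).} Exactness and tensor compatibility of $\omega^\mc{L}_\et$ follow because $\BC$ restricted to $\Vect^0_\FF$ is an exact tensor equivalence with $\Loc_{\mbb{Q}_p}$ \cite[Corollary II.2.20]{fargues-scholze:geometrization}. For a basic pair, the grading $\mc{L}_\et=\bigoplus\mc{L}_{\et,\lambda}$ makes $\mc{E}_{\mc{L}_\et}=\bigoplus(\mc{E}_\lambda)_{\mc{L}_{\et,\lambda}}$, so $V:=\BC(\mc{E}_{\mc{L}_\et})$ is graded. The new lattice $\mc{E}\boxtimes\mbb{B}^+_\dR$ is graded as well, and modifying $V_\lambda\otimes\mc{O}_\FF=(\mc{E}_\lambda)_{\mc{L}_{\et,\lambda}}$ back along it recovers $\mc{E}_\lambda$. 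I need to fix the indexing so that the slope of $\mc{E}_\lambda$ matches $-\lambda/2$ (possibly after relabelling the grading; I will follow the convention of \cite[\S5]{howe-klevdal:ap-pahsI}).

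The inverse functor sends $(V,\mc{L}_\dR)$ to $\bigl(\bigoplus (V_\lambda\otimes\mc{O}_\FF)_{\mc{L}_{\dR,\lambda}},\, V\otimes\mbb{B}^+_\dR\bigr)$. Its output is HN-split, with pieces semistable at each geometric point by hypothesis, and it is basic by construction. The two composites are naturally isomorphic because modification along a lattice is inverse to modification along the original lattice. Full faithfulness reduces to the matching of lattice-containment conditions, which is symmetric. Everything is functorial, so no descent is needed here beyond part (1).

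\textbf{Part (4).} This is the main obstacle. Since $\pi_0(S)$ is discrete I may assume $S$ connected, and I first prove abelianness. Given a morphism $f$, I would take kernels and cokernels on the underlying $\mbb{Q}_p$-local systems; for pairs, on $\BC(\mc{E}_{\mc{L}_\et})$. The difficulty is that a cokernel of local systems need not be a local system, and the induced lattices need not be lattices with the correct semistability.

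The plan is as follows. At each geometric point, \cite{howe-klevdal:ap-pahsI} shows the category is abelian, with kernel and cokernel again semistable of the correct slopes. So the rank of $\omega_s(f)$ is locally constant on $|S|$, and, $S$ being connected, constant. From this I would deduce that the kernel and image of $f$ on local systems are local systems; this is the key step, and I would prove it by $v$-locally trivializing and using continuity of rank for maps $\ul{\mbb{Q}_p}^n\to\ul{\mbb{Q}_p}^m$. I expect the locally-spatial, discrete-$\pi_0$ hypothesis to be used exactly here. Next I would show that the induced lattices (intersection for the kernel, image for the cokernel) are $\mbb{B}^+_\dR$-local systems, using strictness at geometric points, which comes from semistability of equal slope. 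Finally I would verify the graded and HN-split structure.

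Once abelianness is established, $\omega_s$ is exact and faithful because it is faithful on $\mbb{Q}_p$-local systems over connected $S$, and it is a tensor functor. Since $\End(\mathbf{1})=\mbb{Q}_p$ (sections of $\ul{\mbb{Q}_p}$ over connected $S$), the categories are neutral Tannakian.
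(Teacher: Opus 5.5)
Your parts (1)--(3) are fine and agree with the paper, including the reindexing $\mc{E}_\lambda=(V_{-2\lambda}\otimes\mc{O}_\FF)_{\mc{L}_{\dR,-2\lambda}}$ you anticipate. The gap is in part (4), at the step you yourself flag.

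The inference ``the category is abelian at every geometric point, so the rank of $\omega_s(f)$ is locally constant on $|S|$'' is not valid. After $v$-locally trivializing, $f$ is a continuous map $|U|\to M_{m\times n}(\mbb{Q}_p)$, and continuity only gives \emph{lower semicontinuity} of the rank. The remark immediately after the theorem shows that no pointwise argument can do better. On $S=\ul{\ZZ_p}$, multiplication by the coordinate on $\ul{\mbb{Q}_p}$ is well behaved at every geometric point, yet its rank drops at $0$ and its kernel is not a local system. Your argument never uses that $S$ is a locally spatial diamond with discrete $\pi_0$; you only say you expect the hypothesis to enter there. Your linear-algebra step (constant rank implies that kernel and image are local systems) is fine. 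What needs a genuine global input is the constancy of the rank. The paper supplies it via \cite[Theorem 3.24]{mann-werner}, together with the existence of integral lattices on an \'{e}tale cover \cite[Lemma 3.10]{mann-werner}. These give that $\mbb{Q}_{p^d}$-local systems on such $S$ form an abelian category.

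The same problem recurs on the bundle side, which your plan treats only as a final ``verification''. For a morphism of (not necessarily basic) admissible pairs, you must show that each $\ker(\mc{E}_\lambda\to\mc{E}'_\lambda)$ is a vector bundle on $\FF_S$, semistable of slope $\lambda$. In your formulation, this amounts to showing that the kernel of $\mc{E}\boxtimes\mbb{B}^+_\dR\to\mc{E}'\boxtimes\mbb{B}^+_\dR$ is a $\mbb{B}^+_\dR$-local system. ``Strictness at geometric points'' is again only pointwise information. It does not prevent the relative position of this kernel from jumping over $S$, just as the rank jumps in the example above.

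The paper handles this side directly. By \cite[Theorem 8.5.12]{kedlaya-liu:relative-foundations}, semistable bundles of slope $\lambda=c/d$ on $\FF_S$ are equivalent to $\mbb{Q}_{p^d}$-local systems with a semilinear Frobenius. The Mann--Werner input then shows that each graded kernel is a vector bundle of the right type. Slope-zero semistability of the modified kernel is a pointwise statement, handled by \cite[Theorem 5.1.6]{howe-klevdal:ap-pahsI}, and cokernels follow by duality.

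To repair your proof, you need this global input on each graded piece of $\mc{E}$, and on the slope-zero bundle $\mc{E}_{\mc{L}_\et}$ for the \'{e}tale side. It cannot be obtained from the geometric-point theory plus continuity.
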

\begin{proof}
Both (1) and (2) are clear because $\QQ_p$-local systems/$\mbb{B}_\dR^+$-local systems/vector bundles on the relative Fargues--Fontaine curve all form $\QQ_p$-linear rigid tensor categories that satisfy $v$-descent. 

For (3), a quasi-inverse functor sends $(V, \mc{L}_\dR) = \oplus_{\lambda} (V_\lambda, \mc{L}_{\dR,\lambda})$ to 
\[ \mc{E} = \oplus_{\lambda} (V_{-2\lambda} \otimes_{\ul{\mbb{Q}}_p} \mc{O}_{\FF})_{\mc{L}_{\dR, -2\lambda}}\]
with \'etale lattice $\mc{L}_{\et} = V \otimes_{\ul{\QQ_p}}\mbb{B}_\dR^+ =\oplus_{\lambda} V_{-2\lambda} \otimes_{\ul{\QQ_p}} \mbb{B}_\dR^+$. 

For (4), to show $\AdmPair(S)$ is an abelian category, the key point is to show that the kernel of a map $f \colon (\mc{E}, \mc{L}) \to (\mc{E}', \mc{L}')$ of admissible pairs is again an admissible pair; the same then holds for cokernels by duality.  Now $\ker(f)$ has a $\QQ$-grading by $\ker(\mc{E}_\lambda \to \mc{E}'_\lambda)$, which we need to show is a vector bundle on the relative Fargues--Fontaine curve over $S$ that is semistable of slope $\lambda$. By \cite[Theorem 8.5.12]{kedlaya-liu:relative-foundations}, the category of semistable slope $\lambda = \frac{c}{d}$ vector bundles on $\FF_S$ is equivalent to the category of $\mathbb{Q}_{p^d}$-local systems on $S$ with a semilinear action of Frobenius. We thus reduce to showing that the category of $\mathbb{Q}_{p^d}$-local systems on $S$ form an abelian category when $\pi_0(S)$ is discrete. This follows from \cite[Theorem 3.24]{mann-werner} upon noticing that any $\mathbb{Q}_{p^d}$-local system admits a $\mathbb{Z}_{p^d}$-lattice on an \'etale cover \cite[Lemma 3.10]{mann-werner}. Thus $\ker(\mc{E} \to \mc{E}')$ is a vector bundle. That the modification of $\ker(f)$ by $\ker(\mc{L} \to \mc{L}')$ is semi-stable of slope $0$ at each geometric point follows from \cite[Theorem 5.1.6]{howe-klevdal:ap-pahsI} (see also \cite[Theorem 7.2]{howe-klevdal:ap-pahsII}). The arguments for $\AdmPair^\basic$ and $\HS$ are similar, and that $\omega_s$ is a fiber functor when $S$ is connected follows as above from the full-faithfulness of restriction to a point for $\mathbb{Q}_{p}$-local systems.

\end{proof}

\begin{remark}
If $\pi_0(S)$ is not discrete, then $\AdmPair(S)$ and $\HS(S)$ may not be abelian. Indeed, both contain $\mbb{Q}_p$-local systems as a full subcategory, but if $\pi_0(S)$ is not discrete, then the kernel/cokernel of a map of $\mbb{Q}_p$-local systems on $S$ is not always a $\mbb{Q}_p$-local system on $S$. For example, if $S$ is a small $v$-sheaf with $\pi_0(S) = \ZZ_p$ (e.g.\ $S = \ul{\ZZ_p}$) and $V=\ul{\mbb{Q}_p}$ is the trivial rank one local system, then $V$ admits an endomorphism whose value on a perfectoid $U \xrightarrow{f} S$ sends a section $s \in V(U)$ to $\pi_0(f) \cdot s$. The kernel of this map restricts to the trivial rank one local system on the connected component $0 \in \ZZ_p = \pi_0(S)$, but is zero on any other connected component. Thus the kernel is not a $\QQ_p$-local system on $S$; similarly, the cokernel is supported on the connected component $0 \in \ZZ_p = \pi_0(S)$ where it restricts to a rank one local system, so it is not a $\QQ_p$-local system on $S$.
\end{remark}

\begin{definition}\label{def.motivic-galois-group}
Suppose $S$ is a connected locally spatial diamond, $s \colon \Spa(C,C^+) \to S$ is a geometric point, and $A = (\mc{E}, \mc{L}_\et)$ is an admissible pair on $S$. The \emph{motivic Galois group} $\MG(A, s)$ is the algebraic group over $\QQ_p$ representing the automorphisms of $\omega_s$ restricted to the Tannakian subcategory $\langle A \rangle \subseteq \AdmPair(S)$ generated by $A$. 
\end{definition}

We consider the following natural exact tensor functors:

\begin{align}
\begin{split}
    &\omega_\et: \AdmPair \rightarrow \Loc_{\ul{\mbb{Q}_p}},\;  (\mc{E}, \mathcal{L}_\et) \mapsto \BC(\mc{E}_{\mathcal{L}_\et}) \\
    &\omega_\et: \HS \rightarrow \Loc_{\ul{\mbb{Q}_p}},\; (V, \mc{L}_\dR) \mapsto V \\
    &\omega_\FF: \AdmPair \rightarrow \Vect^{\HNsplit}_{\FF},\; (\mc{E}, \mathcal{L}_\et) \mapsto \mc{E} \\
    &\omega_\FF: \HS \rightarrow \Vect^{\HNsplit}_{\FF},\; \oplus_\lambda (V_\lambda, \mathcal{L}_\lambda) \mapsto \oplus_{\lambda} (V_\lambda \otimes_{\ul{\mbb{Q}_p}} \mathcal{O}_\FF)_{\mathcal{L}_\lambda}   \\
    &\omega_{\mc{L}_\et}: \AdmPair \to \Loc_{\mbb{B}_\dR^+},\; (\mc{E}, \mc{L}_{\et}) \mapsto \mc{L}_\et\\
    &\omega_{\mc{L}_\et}: \HS \to \Loc_{\mbb{B}_\dR^+},\; (V, \mc{L}_\dR) \mapsto V \otimes_{\ul{\mbb{Q}_p}} \mbb{B}^+_\dR \\
    &\omega_{\mc{L}_\dR}: \AdmPair \to \Loc_{\mbb{B}_\dR^+},\;  (\mc{E}, \mc{L}_\et) \mapsto \mc{E} \boxtimes \mbb{B}_\dR^+ \\
    &\omega_{\mc{L}_\dR}: \HS \to \Loc_{\mbb{B}_\dR^+},\;  (V, \mc{L}_\dR) \mapsto \mc{L}_\dR  
\end{split}
\end{align}

For $\bullet = \et, \FF, \mc{L}_\et, \mc{L}_\dR$, there are canonical isomorphisms $\omega_\bullet \circ \omega_{\et}^\mathcal{L}=\omega_\bullet$ of functors on $\AdmPair^\basic$; and moreover we have canonical identifications 
    \[ \omega_{\mc{L}_\et} \otimes_{\mbb{B}^+_\dR} \mbb{B}_\dR = \omega_\FF \boxtimes \mbb{B}_\dR = \omega_{\mc{L}_\dR} \otimes_{\mbb{B}^+_\dR} \mbb{B}_\dR. \]

\subsubsection{Relation with neutral $p$-adic Hodge structures and admissible pairs.}\label{ss.neutral-admissible-pairs} We discuss here the relation between the general theory developed here and the neutral theory of \cite{howe-klevdal:ap-pahsII}. Recall that a neutral $p$-adic Hodge structure on $S$ is a finite dimensional $\mbb{Q}$-graded pair $\bigoplus_{\lambda} (V_{\lambda},\mc{L}_{\dR,\lambda})$ where $V_{\lambda}$ is a $\mbb{Q}_p$-vector space and $\mc{L}_{\dR,\lambda}$ is a $\mbb{B}^+_\dR$ lattice in $V_{\lambda}\otimes_{\mbb{Q}_p} \mbb{B}_\dR$ such that $(V_{\lambda} \otimes_{\mbb{Q}_p} \mc{O}_\FF)_{\mc{L}_\dR}$ is semistable of slope $-\lambda/2$ at each geometric point. There is a natural functor from neutral $p$-adic Hodge structures on $S$ to $p$-adic Hodge structures on $S$ given by 
\begin{equation}\label{eq.neutral-hs-to-hs} \bigoplus (V_{\lambda},\mc{L}_{\dR,\lambda}) \mapsto \bigoplus( \ul{V_\lambda}, \mc{L}_{\dR,\lambda}). \end{equation}

Recall that a neutral admissible pair on $S$ is a pair $(W,\mc{L}_\et)$ where $W$ is an isocrystal and $\mc{L} \subseteq W \otimes \mbb{B}_\dR$ is a $\mbb{B}^+_\dR$-lattice such that $\mc{E}(W)_{\mc{L}}$ is semistable of slope zero at every geometric point. There is a natural functor from neutral admissible pairs to admissible pairs on $S$
\begin{equation}\label{eq.neutral-ap-to-ap} (W, \mc{L}_{\et}) \mapsto ( \mc{E}^\HN(W), \mc{L}_{\et}). \end{equation}

The following is then immediate from the definitions and the discussion in the proof of \cref{theorem.ap-hs-properties}. 
\begin{proposition}
    The functors \cref{eq.neutral-ap-to-ap} and \cref{eq.neutral-hs-to-hs} are faithful exact tensor functors. If $S$ is connected, they are fully faithful. 
\end{proposition}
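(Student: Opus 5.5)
The plan is to treat the two functors in parallel, since \cref{eq.neutral-hs-to-hs} is just the composition of \cref{eq.neutral-ap-to-ap} (restricted to basic neutral pairs) with the equivalence $\omega_\et^{\mc{L}}$ of \cref{theorem.ap-hs-properties}(3), modulo the analogous neutral equivalence of \cite{howe-klevdal:ap-pahsII}. So I would carry out the argument for admissible pairs and then note that the same argument applies verbatim with isocrystals replaced by $\mathbb{Q}$-graded $\mathbb{Q}_p$-vector spaces and $\mc{E}^\HN$ replaced by $V \mapsto \ul{V}$.

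First, the tensor property and exactness. The functor $W \mapsto \mc{E}^\HN(W)$ from $\Isoc$ to $\Vect_\FF^{\HNsplit}(S)$ is an exact tensor functor, and it is compatible with $\boxtimes \mbb{B}_\dR$. The lattice $\mc{L}_\et$ is carried along unchanged, so tensor products, duals and exact sequences of lattices are preserved. Exactness in the category of admissible pairs can be tested on the underlying bundles and lattices, and these are exact there. Faithfulness reduces to faithfulness of \eqref{eq.isoc-to-vb}, which is recalled in the preliminaries, since a morphism of pairs is determined by its underlying morphism of bundles. The same holds for $V \mapsto \ul{V}$ from $\Vect_{\mathbb{Q}_p}$ to $\Loc_{\mathbb{Q}_p}(S)$, which is faithful for $S$ nonempty. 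The statement should be read for $S$ nonempty.

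For full faithfulness when $S$ is connected, a morphism in the target is a morphism $f\colon \mc{E}^\HN(W) \to \mc{E}^\HN(W')$ of HN-split bundles preserving lattices. It suffices to show that $f$ comes from a morphism of isocrystals $W \to W'$; the lattice condition is then the same condition in both categories. The key point is the computation $\ul{\Hom}(\mc{E}_\lambda^\gr(W), \mc{E}_\lambda^\gr(W')) = \BC$ of the slope-$0$ bundle $\mc{H}om(\mc{E}(W_\lambda),\mc{E}(W'_\lambda))$. By the equivalence $\Vect^0_\FF(S) \simeq \Loc_{\mathbb{Q}_p}(S)$, this is the constant local system $\ul{\Hom_{\Isoc}(W_\lambda, W'_\lambda)}$. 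Here one uses that for $P = \Spa C$ the map $\Hom_\Isoc \to \Hom$ on HN-split bundles is bijective, by the equivalence recalled after \eqref{eq.isoc-to-vb}, and that the identification is compatible with base change. Morphisms of HN-split bundles preserve grading, so only the isotypic pieces of equal slope matter. Global sections of a constant local system $\ul{H}$ on a connected $S$ are $H$ itself, since continuous maps $|S| \to H$ into a totally disconnected space are constant. This gives bijectivity.

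I expect the main obstacle to be justifying that $\mc{H}om$ of two pure slope-$\lambda$ bundles coming from isocrystals is the trivial local system associated to $\Hom_\Isoc$ globally over $S$, rather than only at geometric points. I would handle this by noting that $\mc{H}om(\mc{E}(W_\lambda),\mc{E}(W'_\lambda)) = \mc{E}(\ul{\Hom}(W_\lambda,W'_\lambda))$ with internal Hom an isocrystal of slope $0$. That isocrystal is $H \otimes \breve{\mathbb{Q}}_p$ with $H = \Hom_\Isoc(W_\lambda,W'_\lambda)$, so the bundle is $H \otimes \mc{O}_\FF$, and $\BC$ of it is $\ul{H}$ directly from the definitions.
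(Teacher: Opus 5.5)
Your proposal is correct and is essentially the paper's argument, which it dismisses as immediate from the definitions together with the full faithfulness of restriction to a point for $\mathbb{Q}_p$-local systems on connected $S$. Your identification of each graded Hom with the constant local system $\underline{\Hom_{\Isoc}(W_\lambda,W'_\lambda)}$ (resp.\ $\underline{\Hom(V_\lambda,V'_\lambda)}$) is exactly that input made explicit. One correction to your opening remark, on which nothing in your argument depends: \cref{eq.neutral-hs-to-hs} does not factor through \cref{eq.neutral-ap-to-ap} via $\omega^{\mc{L}}_\et$, because a neutral $p$-adic Hodge structure need not give a neutral admissible pair (the modified bundle is only pointwise isomorphic to some $\mc{E}(W)$), but your direct verbatim argument for the Hodge structure case is fine.
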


In particular, if $S$ is a connected locally spatial diamond, the motivic Galois group of a neutral admissible pair as defined in \cite{howe-klevdal:ap-pahsII} agrees with the motivic Galois group of the associated admissible pair as defined in \cref{def.motivic-galois-group}. 

\subsection{$G$-structure for admissible pairs and $p$-adic Hodge structures}\label{ss.G-structure}

\begin{definition}
Let $S/\Spd\+ \mbb{Q}_p$ be a small $v$-sheaf and let $G/\mbb{Q}_p$ be a connected linear algebraic group. 
\begin{enumerate}
    \item A $G$-$p$-adic Hodge structure on $S$ is an exact tensor functor $\mc{G}: \Rep\, G \rightarrow \HS(S)$ such that, for any geometric point $s: \Spd(C,C^+)\rightarrow S$, $\omega_s \circ \mc{G}$ is isomorphic to $\omega_\std$. 
    
    \item A $G$-admissible pair on $S$ is an exact tensor functor $\mc{G}: \Rep\, G \rightarrow \AdmPair(S)$ such that, for any geometric point $s: \Spd(C,C^+) \rightarrow S$, $\omega_s \circ \mc{G}$ is isomorphic to $\omega_\std$.
\end{enumerate}
 If $S/\Spd\Qpbreve$, a $G$-admissible pair $\mc{G}$ on $S$ is called neutral if $\omega_\FF \circ \mc{G} \cong \mc{E}_b$ for some $b \in G(\breve{\QQ}_p)$, and an isomorphism $\omega_\FF \circ \mc{G} \cong \mc{E}_b$ of HN-graded vector bundles is called a $b$-rigidification. 
\end{definition}

Because of the equivalence between basic admissible pairs and $p$-adic Hodge structures on $S$ of \cref{theorem.ap-hs-properties}-(3), one can consider a $G$-$p$-adic Hodge structure instead as a $G$-admissible pair that factors through $\AdmPair^\basic$. Thus, we focus on admissible pairs for the remainder of this section. 

\begin{example}
If $S$ is a connected locally spatial diamond, $s \colon \Spd(C, C^+) \to S$ is a geometric point, and $A$ is an admissible pair on $S$, then $A$ admits a canonical $\MG(A, s)$-structure (see \cite[\S 2.1.3]{howe-klevdal:ap-pahsI}). 
\end{example}

\subsection{Lattice period maps}

\subsubsection{}
Let $S/\Spd\+ \mbb{Q}_p$ be a small $v$-sheaf, let $G/\mbb{Q}_p$ be a connected linear algebraic group, and let $\mc{G}$ be a $G$-admissible pair on $S$. We define the $v$-sheaf of \'etale trivializations of $\mc{G}$
\[ \tilde{S}_{\mc{G},\et}:= \ul{\Isom}^\otimes(\omega_\std \otimes_{\mbb{Q}_p} {\ul{\mbb{Q}_p}}, \omega_\et \circ \mc{G}) \]

Concretely, if $T \in \Perfd_{\mbb{Q}_p}$ is a perfectoid space, then $\tilde{S}_{\mc{G},\et}(T)$ classifies pairs consisting of a map $T \to S$ and an isomorphism $\rho_\et \colon \omega_\std \otimes_{\mbb{Q}_p} \ul{\QQ_p} \xrightarrow{\sim} \omega_\et\circ \mc{G}|_{T}$ of $\otimes$-functors. There is a right action of $\ul{G(\QQ_p)}=G(\ul{\mbb{Q}_p})$ on $\tilde{S}_{\mc{G},\et}$ by automorphisms of $\omega_\std \otimes_{\mbb{Q}_p} \ul{\mbb{Q}_p}$. 

\subsubsection{}
Suppose $S/\Spd\+ \Qpbreve$. Writing $B(G)^{\mr{disc}}$ for the set $B(G)$ equipped with the discrete topology, there is a continuous isocrystal classifying map associated to $\mc{G}$,
\begin{equation}\label{eqn:isocrystal-classifying-map}
    |S| \to |\mr{Bun}_G^{\HN}| = B(G)^{\mr{disc}} 
\end{equation}
where the first map is induced by $\omega_\FF \circ \mc{G}$ and the second equality is from \cite[Proposition III.4.7]{fargues-scholze:geometrization} (this result is a priori for $G$ reductive but the proof goes through for $G$ a connected linear algebraic group by using \cite[Theorem 4.3]{howe-klevdal:ap-pahsII} in place of \cite[Theorem III.2.4]{fargues-scholze:geometrization}). 

For $[b] \in B(G)$ we let $S_{\mc{G},[b]}$ be the preimage of $[b]$ under the isocrystal classifying map \cref{eqn:isocrystal-classifying-map}. For $b \in G(\Qpbreve)$, we define the $v$-sheaf of $b$-rigidifications of $\mc{G}$ 
\[ \tilde{S}_{\mc{G},b} := \ul{\Isom}^\otimes(\mc{E}_b^\gr, \omega_\FF \circ \mc{G}). \]

Concretely, if $T \in \Perfd_{\mbb{Q}_p}$, $\tilde{S}_{\mc{G},b}(T)$ consists of $T \to S$ with $\rho_b \colon \mc{E}_b \xrightarrow{\sim} \omega_\FF \circ \mc{G}|_T$ (we emphasize that these functors are taken to be valued in $\Vect^{\HNsplit}(\FF_T)$, not $\Vect(\FF_T)$). There is a right action of $\ul{G_b(\QQ_p)}=G_b(\ul{\mbb{Q}_p})$ on $\tilde{S}_{\mc{G},b}$ by automorphisms of $\mc{E}_b$. 

\begin{lemma}\hfill
\begin{enumerate}
    \item Let $S/\Spd\+ \mbb{Q}_p$ be a small $v$-sheaf and let $\mc{G}$ be a $G$-admissible pair on $S$. The $v$-sheaf $\tilde{S}_{\mc{G},\et}$ is a $G(\QQ_p)$-torsor over $S$.  
    \item Let $S/\Spd\+ \Qpbreve$ be a small $v$-sheaf and let $\mc{G}$ be a $G$-admissible pair on $S$. The $v$-sheaf $\tilde{S}_{\mc{G},b}$ is a $G_b(\QQ_p)$-torsor over $S_{\mc{G},b}$. 
\end{enumerate}
In particular, if $S$ in (1) or (2) is a locally spatial diamond, then so is $\tilde{S}_{\mc{G},\et}$ or $\tilde{S}_{\mc{G},b}$. 
\end{lemma}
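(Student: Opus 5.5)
For (1), I will check the torsor condition $v$-locally on $S$, which is enough because both $\tilde{S}_{\mc{G},\et}$ and the $\ul{G(\QQ_p)}$-action are defined functorially and satisfy $v$-descent. So fix a perfectoid $T \to S$; after passing to a $v$-cover, $T$ may be taken to be a disjoint union of affinoid perfectoids. The functor $\omega_\et \circ \mc{G}|_T \colon \Rep\+ G \to \Loc_{\QQ_p}(T)$ is an exact tensor functor, i.e.\ a $G(\ul{\QQ_p})$-local system. By the definition of a $G$-admissible pair, at each geometric point $s$ its fiber $\omega_s \circ \mc{G}$ is isomorphic to $\omega_\std$.

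The plan is to invoke the standard equivalence between $G(\ul{\QQ_p})$-local systems and $\ul{G(\QQ_p)}$-torsors on $\Perfd_{\QQ_p}$. This is the $v$-sheaf version of the Tannakian description of $G$-torsors, proved by a Broshi-style argument: $\ul{\Isom}^\otimes(\omega_\std \otimes \ul{\QQ_p}, \omega)$ is a $v$-sheaf torsor under $\ul{\Aut}^\otimes(\omega_\std\otimes\ul{\QQ_p}) = \ul{G(\QQ_p)}$ as soon as it has sections $v$-locally.

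The main obstacle is producing those local sections. At a geometric point $s = \Spa(C,C^+)$, a $\QQ_p$-local system is the constant sheaf attached to its global sections, since $\pi_0$ is a point and $v$-local triviality is effective there. So a trivialization exists over $s$, coming from the given isomorphism $\omega_s\circ\mc{G}\cong\omega_\std$. To spread this out, I would choose a finite-dimensional faithful representation $V_0$ that tensor-generates $\Rep\+ G$ and realize $G$ as the stabilizer of a finite set of tensors $t_i$ in $V_0$. Then $\tilde{S}_{\mc{G},\et}|_T$ is the closed subsheaf of $\ul{\Isom}(V_0\otimes\ul{\QQ_p}, \omega_\et\mc{G}(V_0))$ preserving the $t_i$. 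After trivializing $\omega_\et\mc{G}(V_0)$ and the relevant tensor local systems, which is possible $v$-locally (indeed pro-étale locally, using a lattice as in \cite[Lemma 3.10]{mann-werner}), this subsheaf is the sheaf of continuous maps from $|T|$ to the $\QQ_p$-analytic space $X$ of isomorphisms carrying the $t_i$ to the corresponding tensors. Because each fiber is nonempty, $X$ is a $G(\QQ_p)$-torsor pointwise. Its projection from $\ul{\Isom}$ is then surjective on points, and surjective over strictly totally disconnected $T$ by a profinite-set lifting argument using that $G(\QQ_p)\to$ orbit is a submersion of $p$-adic manifolds with local sections. Hence it is a $v$-cover, which gives (1).

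For (2), I would argue the same way, but with $\mr{Bun}_G^{\HNsplit}$ in place of local systems and using \cref{thm:bun-HN-split} directly. The HN-split $G$-bundle $\omega_\FF\circ\mc{G}|_{S_{\mc{G},[b]}}$ defines a map $S_{\mc{G},[b]} \to \mr{Bun}_G^{\HNsplit}$. Since it lands pointwise in the class $[b]$ and $|\mr{Bun}_G^{\HNsplit}| = B(G)^{\mr{disc}}$ by \eqref{eqn:isocrystal-classifying-map}, it factors through the open-closed component $[\ast/\ul{G_b(\QQ_p)}]$. The quasi-inverse in \cref{thm:bun-HN-split} identifies the corresponding torsor with $\ul{\Isom}^\otimes(\mc{E}_b^\gr, \omega_\FF\circ\mc{G}) = \tilde{S}_{\mc{G},b}$, and the right actions of $\ul{G_b(\QQ_p)}$ match.

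Finally, for the local spatiality, I would use that a $\ul{H}$-torsor for a locally profinite group $H$ is, locally on $S$, a torsor under a compact open $K \le H$ together with a discrete set of translates. A $\ul{K}$-torsor is a pro-finite-étale cover of $S$, since it is a limit of the finite étale $\ul{K/K'}$-quotients, so it is quasi-pro-étale over $S$. Quasi-pro-étale maps preserve (locally spatial) diamonds by \cite[Proposition 11.5, Corollary 11.28]{scholze:etale-cohomology-of-diamonds}. Applying this with $H=G(\QQ_p)$ and $H=G_b(\QQ_p)$ gives the claim in both cases.
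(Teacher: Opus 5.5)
Your treatment of (2) and of the local spatiality claim is essentially the paper's. Part (2) is read off from the $[b]$-component of \cref{thm:bun-HN-split}. For spatiality, the paper simply cites \cite[Lemma 10.13]{scholze:etale-cohomology-of-diamonds}, and your reduction to a compact open $K$ plus the \'{e}tale $\ul{H/K}$-quotient unpacks that citation.

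For (1) you take a genuinely different route. The paper does no hands-on work there: it treats (1) as the case $b=1$ of the same theorem. Via $\Loc_{\QQ_p}\simeq\Vect^0_\FF$, it views $(\omega_\et\circ\mc{G})\otimes\mc{O}_\FF$ as an HN-split $G$-bundle. By the pointwise hypothesis this bundle lies in the component of $\mc{E}_1^\gr=\omega_\std\otimes\mc{O}_\FF$, so $\tilde{S}_{\mc{G},\et}=\ul{\Isom}^\otimes(\mc{E}_1^\gr,(\omega_\et\circ\mc{G})\otimes\mc{O}_\FF)$ is a $\ul{G(\QQ_p)}$-torsor.

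You instead reprove this special case directly. After trivializing $\omega_\et\mc{G}(V_0)$, the images $s_i$ of the tensors are continuous maps from $|T|$ to the relevant tensor space of $V_0$. By the pointwise hypothesis they land in the orbit $\GL(V_0)(\QQ_p)\cdot(t_i)\cong\GL(V_0)(\QQ_p)/G(\QQ_p)$, and local sections of this submersion lift the $s_i$ on an open cover of $T$.

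Each route has its merits:
\begin{itemize}
\item Yours is more elementary and avoids the Fargues--Scholze classification entirely.
\item Yours also makes visible that the hypothesis $\omega_s\circ\mc{G}\cong\omega_\std$ is exactly what excludes $G$-bundles coming from nontrivial classes in $H^1(\QQ_p,G)$.
\item The paper's argument is two lines and uniform in $b$.
\end{itemize}

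Three points in your sketch need tightening.
\begin{itemize}
\item The ``space $X$'' of tensor-preserving isomorphisms is not a fixed $\QQ_p$-analytic space, since the target tensors vary over $|T|$. The correct formulation is a lifting problem along $\GL(V_0)(\QQ_p)\to\GL(V_0)(\QQ_p)/G(\QQ_p)$, not along ``$G(\QQ_p)\to$ orbit''.
\item The paper allows non-reductive connected $G$, so $V_0$ must be chosen so that $G$ is cut out by tensors. One way is to adjoin the inverse of Chevalley's character to a faithful representation.
\item The identification of $\tilde{S}_{\mc{G},\et}|_T$ with the tensor-preserving isomorphisms on $V_0$ needs a word. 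A tensor-preserving isomorphism on $V_0$ extends to a tensor isomorphism of functors because, pointwise, it differs from a trivialization by an element of $G(\QQ_p)$. It therefore preserves all the relevant sub-local systems of tensor constructions.
\end{itemize}
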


\begin{proof}
Parts (1) and (2) follow from the $b = 1$ and $b = b$ cases of \cref{thm:bun-HN-split}, respectively. The claim that  $\tilde{S}_{\mc{G},\et}$ or $\tilde{S}_{\mc{G},b}$ is a locally spatial diamond when $S$ is a locally spatial diamond follows from \cite[Lemma 10.13]{scholze:etale-cohomology-of-diamonds}.
\end{proof}

\subsubsection{}
Suppose $S/\Spd\+ \mbb{Q}_p$ is a small $v$-sheaf and $\mc{G}$ is a $G$-admissible pair on $S$.  Recalling that $\Gr_G$ parameterizes $\mbb{B}^+_\dR$-lattices on the trivial $G(\mbb{B}_\dR)$-local system, we define a de Rham lattice period map $\pi_{\mc{L}_\dR}: \tilde{S}_{\mc{G},\et} \rightarrow \Gr_G$ by 
\[ \pi_{\mc{L}_\dR}(\rho_\et) = \rho_\et^{-1}(\omega_{\mc{L}_\dR} \circ \mc{G}), \]
where the right-hand side is viewed as a $\mbb{B}^+_\dR$-lattice in $\omega_\std \otimes_{\mbb{Q}_p} \mbb{B}_\dR$. 

\subsubsection{}Suppose $S/\Spd\+ \Qpbreve$ is a small $v$-sheaf and $\mc{G}$ is a $G$-admissible pair on $S$.  For $b \in G(\Qpbreve)$, we define an \'{e}tale lattice period map $\pi_{\mc{L}_\et}: \tilde{S}_{\mc{G},b} \rightarrow \Gr_G$ by
    \[  \pi_{\mc{L}_\et}(\rho_b) = \rho_{b}^{-1}(\omega_{\mc{L}_\et} \circ \mc{G}), \]
where the right-hand side is viewed as a $\mbb{B}^+_\dR$-lattice in $\omega_\std \otimes_{\mbb{Q}_p} \mbb{B}_\dR$  using the canonical identification $\mc{E}_b \boxtimes \mbb{B}_\dR^+ = \omega_\std \otimes_{\QQ_p} \mbb{B}_\dR^+$. It is immediate that $\pi_{\mc{L}_\et}$ factors through the open locus $\Gr_G^{b-\adm}$ defined in \cite[\S 7.3]{howe-klevdal:ap-pahsII}. 

\subsubsection{} Let  $b \in G(\Qpbreve)$. Recall that in \cite[\S 8.1, \S 8.3]{howe-klevdal:ap-pahsII}, we have defined a moduli space $\mc{M}_b/\Spd\+ \Qpbreve$ of neutral admissible pairs equipped with de Rham and \'{e}tale lattice period maps $\pi_{\mc{L}_\et}: \mc{M}_b \rightarrow \Gr_G^{b-\adm} \subseteq \Gr_G$ and $\pi_{\mc{L}_\dR}: \mc{M}_b \rightarrow \Gr_G$. For $S/\Spd\+ \Qpbreve$ a small $v$-sheaf and $\mc{G}$ a $G$-admissible pair on $S$, we have a natural classifying map $\tilde{S}_{\mc{G}, b,\et}=\tilde{S}_{\mc{G},\et,b} \rightarrow \mc{M}_b$ fitting into a commutative diagram of $v$-sheaves:
\begin{equation}\label{fig:lattice-period-maps}
\begin{tikzcd}
	& {\tilde{S}_{\mc{G}, b, \et}} && {\tilde{S}_{\mc{G}, \et, b}} \\
	{\tilde{S}_{\mc{G},b}} && {\mc{M}_b} && {\tilde{S}_{\mc{G},\et}} \\
	& {\Gr_G^{\badm}} && {\Gr_G} \\
	& {S_{\mc{G},[b]}} & S & S
	\arrow["{=}"{description}, no head, from=1-2, to=1-4]
	\arrow[from=1-2, to=2-1]
	\arrow[from=1-2, to=2-3]
	\arrow[from=1-4, to=2-3]
	\arrow[from=1-4, to=2-5]
	\arrow["{\pi_{\mc{L}_\et}}"{description}, from=2-1, to=3-2]
	\arrow[curve={height=18pt}, from=2-1, to=4-2]
	\arrow["{\pi_{\mc{L}_\et}}"{description}, from=2-3, to=3-2]
	\arrow["{\pi_{\mc{L}_\dR}}"{description}, from=2-3, to=3-4]
	\arrow["{\pi_{\mc{L}_\dR}}"{description}, from=2-5, to=3-4]
	\arrow[curve={height=-12pt}, from=2-5, to=4-4]
	\arrow[hook, from=4-2, to=4-3]
	\arrow["{\mr{Id}}", from=4-4, to=4-3]
\end{tikzcd}
\end{equation}
The two (shape-theoretic) diamonds at the top of the diagram are Cartesian.

\subsection{Types and filtrations}
In this subsection, we define the type of an admissible pair and associated notions. We use freely definitions from \cite[\S 3]{howe-klevdal:ap-pahsII}, in particular the notion of lattices/filtrations over diamonds. 

\begin{definition}
Let $G/\mathbb{Q}_p$ be a connected linear algebraic group. 
\begin{enumerate}
    \item Let $S/\Spd\+ \Qpbreve$ be a small $v$-sheaf and $\mc{G}$ a $G$-admissible pair on $S$. For $[b] \in B(G)$, we say $\mc{G}$ is $[b]$-pure if $S_{\mc{G},[b]} = S_{[b]}$ (equivalently, if the pullback of $\omega_\FF \circ \mc{G}$ to any geometric point is isomorphic to $\mc{E}_b$ for any representative $b$ of $[b]$). 
    \item Let $[\mu]$ be a conjugacy class of cocharacters of $G_{\ol{\QQ}_p}$ with reflex field $\mathbb{Q}_p([\mu])$. For $S/\Spd\+ \Qp([\mu])$ a small $v$-sheaf and $\mc{G}$ a $G$-admissible pair on $S$, we say that $\mc{G}$ is of type $[\mu]$ if $\pi_{\mc{L}_\dR}: \tilde{S}_{\mc{G},\et} \rightarrow \Gr_G$ factors through $\Gr_{[\mu^{-1}]}$. 
\end{enumerate}
\end{definition}

Recall the subset $B(G, [\mu^{-1}]) \subseteq B(G)$, defined by \cite[\S 6]{kottwitz:isocrystals-II} for $G$ reductive, and in general by pullback via $B(G) \cong B(G/R_u G)$ for $R_u G$ the unipotent radical of $G$ (see \cite[\S 2.2.3]{howe-klevdal:ap-pahsI}). 

\begin{lemma}
Suppose $\mc{G}$ is $b$-pure. Then the following conditions are equivalent. 
\begin{enumerate}
    \item $\mc{G}$ has type $[\mu]$.
    \item The image of $\pi_{\mc{L}_\et}$ is contained in $\Gr_{[\mu]}$. 
\end{enumerate}
Moreover, if these conditions are satisfied and $S\neq \emptyset$, then $[b] \in B(G, [\mu^{-1}])$. 
\end{lemma}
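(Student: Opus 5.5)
Since both conditions and the conclusion can be checked on geometric points, I will first reduce to $S=\Spd(C,C^+)$ with $C$ algebraically closed. Being $b$-pure, $\tilde{S}_{\mc{G},b}\to S$ is a $\ul{G_b(\QQ_p)}$-torsor over all of $S$; $\tilde{S}_{\mc{G},\et}\to S$ is always a $\ul{G(\QQ_p)}$-torsor; both torsors are $v$-surjective. The Schubert cells $\Gr_{[\mu^{\pm 1}]}$ are subsheaves of $\Gr_G$, so factorization through them can be tested after pullback to geometric points of the source. Moreover, changing a trivialization $\rho_\et$ by $g\in G(\QQ_p)$ (or $\rho_b$ by $g\in G_b(\QQ_p)\subseteq G(\Qpbreve)$) replaces the lattice by its $g$-translate, which has the same relative position with respect to the standard lattice. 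Hence it suffices to work over $C$ and to fix one isomorphism $\rho_\et$ and one $b$-rigidification $\rho_b$.

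Over $C$, I set $\mc{E}=\omega_\FF\circ\mc{G}$, which is $\mc{E}_b$ via $\rho_b$, and $\mc{E}' = \mc{E}_{\mc{L}_\et}$, which is $\mc{E}_1$ via $\rho_\et$ by admissibility. The three $G(\mbb{B}_\dR)$-torsors $\omega_{\mc{L}_\dR}\circ\mc{G}=\mc{E}\boxtimes\mbb{B}^+_\dR$, $\omega_{\mc{L}_\et}\circ\mc{G}=\mc{L}_\et=\mc{E}'\boxtimes\mbb{B}^+_\dR$, and their common $\mbb{B}_\dR$-span are all trivialized, as $\mbb{B}^+_\dR$-lattices, by $\rho_b$ and $\rho_\et$ respectively. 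The key observation is that $\pi_{\mc{L}_\dR}(\rho_\et)$ is the relative position of the lattice $\mc{E}\boxtimes\mbb{B}^+_\dR$ with respect to the lattice $\mc{L}_\et$, while $\pi_{\mc{L}_\et}(\rho_b)$ is the relative position of $\mc{L}_\et$ with respect to $\mc{E}\boxtimes\mbb{B}^+_\dR$. More precisely, if $\mc{L}_\et = h\cdot(\mc{E}\boxtimes \mbb{B}^+_\dR)$ in the $\rho_b$-trivialization, then in the $\rho_\et$-trivialization $\mc{E}\boxtimes\mbb{B}^+_\dR = h'\cdot \mc{L}_\et$ with $h'$ conjugate to $h^{-1}$ up to $G(\mbb{B}^+_\dR)$ on both sides. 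Now the relative position is an invariant in $G(\mbb{B}^+_\dR)\backslash G(\mbb{B}_\dR)/G(\mbb{B}^+_\dR)$, i.e.\ a dominant cocharacter by Cartan decomposition, and swapping the two lattices inverts it. Therefore $\pi_{\mc{L}_\dR}(\rho_\et)\in\Gr_{[\mu^{-1}]}$ if and only if $\pi_{\mc{L}_\et}(\rho_b)\in \Gr_{[\mu]}$, giving (1)$\iff$(2). I will do this tensorially, applying it to each $V\in\Rep\+G$ with the usual convention for $\Gr_{[\mu]}$ from \cite{howe-klevdal:ap-pahsII}, or directly for $G$-torsors. I expect the main obstacle to be nailing the sign and inverse conventions, i.e.\ matching the paper's convention that type $[\mu]$ means landing in $\Gr_{[\mu^{-1}]}$, rather than any conceptual difficulty.

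For the final assertion, choose a geometric point of $S\neq\emptyset$. Then $\mc{E}_1\cong(\mc{E}_b)_{\mc{L}}$ with $\mc{L}$ in relative position $[\mu]$ via (2). Equivalently, $\mc{E}_b$ is the modification of the trivial $G$-bundle of type $[\mu^{-1}]$, which is the de Rham lattice $\pi_{\mc{L}_\dR}$ lying in $\Gr_{[\mu^{-1}]}$. By the known classification of modifications of the trivial bundle (Rapoport's \cite{rapoport-viehmann} / Caraiani--Scholze result that $\mc{E}_1$ modified of type $\mu$ yields $[b]\in B(G,\mu)$, with the non-reductive case reduced via $B(G)\cong B(G/R_uG)$ as in \cite[\S 2.2.3]{howe-klevdal:ap-pahsI}), it follows that $[b]\in B(G,[\mu^{-1}])$. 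This is the Kottwitz/Mazur-type inequality plus equality of $\kappa$. I will invoke it as in \cite[\S 7.3]{howe-klevdal:ap-pahsII}, where the $b$-admissible locus is described, being careful about the sign convention once more.
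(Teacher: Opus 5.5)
Your proof is correct and is essentially the paper's argument made explicit. The paper passes to $\tilde{S}_{\mc{G},b,\et}=\tilde{S}_{\mc{G},\et}\times_S\tilde{S}_{\mc{G},b}$, which surjects onto both torsors (onto $\tilde{S}_{\mc{G},\et}$ by $b$-purity), and uses that on $\mc{M}_b$ the locus $\mc{M}_{b,[\mu]}$ is cut out equivalently by $\pi_{\mc{L}_\dR}\in\Gr_{[\mu^{-1}]}$ or by $\pi_{\mc{L}_\et}\in\Gr_{[\mu]}$. That equivalence is precisely your observation that, for $\gamma=\rho_b^{-1}\rho_\et\in G(\mbb{B}_\dR)$ and $\Lambda_0=\omega_\std\otimes_{\QQ_p}\mbb{B}^+_\dR$, one has $\pi_{\mc{L}_\et}(\rho_b)=\gamma\cdot\Lambda_0$ and $\pi_{\mc{L}_\dR}(\rho_\et)=\gamma^{-1}\cdot\Lambda_0$. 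This works over any test object, so your reduction to geometric points is harmless but unnecessary, since factoring through a subsheaf is $v$-local. For the last claim, the paper likewise notes that $\pi_{\mc{L}_\et}$ produces a point of $\Gr_{[\mu]}^{\badm}$ and cites \cite[Proposition 7.10]{howe-klevdal:ap-pahsII}, which packages the classification of modifications you invoke, with the sign conventions already matched.
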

\begin{proof}
Since $\mc{G}$ is $b$-pure, the map $\tilde{S}_{\mc{G}, \et, b} \to \tilde{S}_{\mc{G}, \et}$ of \cref{fig:lattice-period-maps} is surjective. Thus $\mc{G}$ has type $[\mu]$ if and only if the  $\tilde{S}_{\mc{G},\et, b} \to \mc{M}_b$ factors through 
$\mc{M}_{b, [\mu]}$, which happens if and only if the $\pi_{\mc{L}_\et}$ factors through $\Gr_{[\mu]}$ (since $\tilde{S}_{\mc{G}, b, \et} \to \tilde{S}_{\mc{G}, b}$ is always surjective). 

Suppose now that $\mc{G}$ has type $[\mu]$ and is $b$-pure. Then $\Gr_{[\mu]}^{\badm}$ is non-empty as it contains $\pi_{\mc{L}_\et}(\tilde{S}_b)$, and consequently $[b] \in B(G, [\mu^{-1}])$ by \cite[Proposition 7.10]{howe-klevdal:ap-pahsII}. 
\end{proof}

\subsubsection{} We recall the trace filtration: if $(\mc{L}_1, \mc{L}_2)$ are two $\mbb{B}_\dR^+$-lattices on a common $\mbb{B}_\dR$-local system on $S$, then 
\begin{align*}
    \trFil_{\mc{L}_1}^i (\mc{L}_2 \otimes_{\mbb{B}_\dR^+} \mc{O}) &:= \text{Image of } \l(\Fil^i \mbb{B}_\dR \cdot \mc{L}_1\r) \cap \mc{L}_2 \text{ in } \mc{L}_2 \otimes_{\mbb{B}_\dR^+} \mc{O}_S \\
    &\cong (\Fil^i \mbb{B}_\dR \cdot \mc{L}_1) \cap \mc{L}_2/(\Fil^i \mbb{B}_\dR \cdot \mc{L}_1) \cap (\Fil^1 \mbb{B}_\dR\cdot \mc{L}_2). 
\end{align*}

For an admissible pair $A = (\mc{E}, \mc{L}_\et)$ on $S$, the filtration $\trFil_{\mc{L}_\et}^\bullet\l( (\mc{E} \boxtimes \mbb{B}_\dR^+) \otimes \mc{O}\r)$ is called the Hodge filtration (written $\Fil_\Hdg^\bullet$), while the filtration $\trFil^\bullet_{\mc{E} \boxtimes \mbb{B}_\dR^+}(\mc{L}_\et \otimes_{\mbb{B}^+_\dR} \mc{O})$ is called the Hodge-Tate filtration (written $\Fil_\HT^\bullet$). Moreover, for $\mc{G}$ a $G$-admissible pair, we define Hodge and Hodge-Tate filtrations $\Fil_\Hdg^\bullet \mc{G}$ and $\Fil_\HT^\bullet \mc{G}$ on the $G(\mc{O})$-local systems $(\omega_{\FF} \circ \mc{G}) \boxtimes \mc{O}$ and $(\omega_{\mc{L}_\et} \circ \mc{G}) \otimes_{\mbb{B}_\dR^+} \mc{O}$ respectively by declaring that for $V \in \Rep\+ G$ 
    \[ \Fil_\Hdg^\bullet \mc{G}(V) = \Fil^\bullet_{\Hdg} (\omega_{\FF} \circ \mc{G}(V) \boxtimes \mc{O}_S) \textrm{ and }  \Fil_\HT^\bullet \mc{G}(V) = \Fil^\bullet_{\HT} (\omega_{\mc{L}_\et} \circ \mc{G}(V) \otimes_{\mbb{B}^+_\dR} \mc{O}). \]
    
In general, the graded pieces are $\mc{O}$-modules, rather than $\mc{O}$-local systems, and furthermore the functors may not be exact (as functors from $\Rep G$ to the exact category of filtered $\mc{O}$-modules). When the graded pieces of such a filtration are $\mc{O}$-local systems and the functors are exact, it is called a filtered $G(\mc{O})$-local system on $S$ as in \cite[\S3.2]{howe-klevdal:ap-pahsII}. 

\begin{proposition}
Let $G/\mathbb{Q}_p$ be a connected linear algebraic group, let $[\mu]$ be a conjugacy class of cocharacters of $G_{\overline{\mbb{Q}}_p}$, and let $S/\Spd\+ \Qpbreve([\mu])$ be a small $v$-sheaf. If $\mc{G}$ is a $G$-admissible pair on $S$  then the following are equivalent:
\begin{enumerate}
    \item The Hodge filtration $\Fil_\Hdg^\bullet \mc{G}$ is a filtered $G(\mc{O})$-local system of type $[\mu^{-1}]$
    \item The Hodge-Tate filtration $\Fil_\HT^\bullet \mc{G}$ is a filtered $G(\mc{O})$-local system of type $[\mu]$
    \item $\mc{G}$ of type $[\mu]$
\end{enumerate}
\end{proposition}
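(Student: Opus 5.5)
The three conditions are $v$-local on $S$: filtered $G(\mc{O})$-local systems of a given type satisfy $v$-descent, and factoring through $\Gr_{[\mu^{-1}]}$ can be checked after the surjection $\tilde{S}_{\mc{G},\et}\to S$. So I would first pass to the $v$-cover $\tilde{S}_{\mc{G},\et}\to S$. Over this cover $\omega_\et\circ\mc{G}$ is trivialized by $\rho_\et$, so $\omega_{\mc{L}_\et}\circ\mc{G}$ becomes $\omega_\std\otimes\mbb{B}^+_\dR$. Via the identification $\omega_{\mc{L}_\et}\otimes\mbb{B}_\dR=\omega_\FF\boxtimes\mbb{B}_\dR$, the lattice $\omega_{\mc{L}_\dR}\circ\mc{G}$ becomes the lattice $\mc{L}:=\pi_{\mc{L}_\dR}(\rho_\et)\subseteq \omega_\std\otimes\mbb{B}_\dR$. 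Under these identifications:
\begin{itemize}
\item the Hodge filtration is the trace filtration $\trFil^\bullet_{\mc{L}_0}(\mc{L}\otimes\mc{O})$, where $\mc{L}_0=\omega_\std\otimes\mbb{B}^+_\dR$ is the standard lattice;
\item the Hodge--Tate filtration is $\trFil^\bullet_{\mc{L}}(\mc{L}_0\otimes\mc{O})$.
\end{itemize}
The statement therefore reduces to a statement about the universal pair of lattices $(\mc{L}_0,\mc{L})$ on $\Gr_G$, pulled back along $\pi_{\mc{L}_\dR}$.

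Condition (3) says exactly that $\pi_{\mc{L}_\dR}$ factors through the Schubert cell $\Gr_{[\mu^{-1}]}$. The implications (3)$\Rightarrow$(1) and (3)$\Rightarrow$(2) then follow from the results recalled from \cite[\S 3]{howe-klevdal:ap-pahsII}. On $\Gr_{[\mu^{-1}]}$ the relative position of $\mc{L}$ and $\mc{L}_0$ is constant, equal to $\mu^{-1}$. After passing to a further $v$-cover one can write $\mc{L}=g\mu(\xi)^{-1}\mc{L}_0$ with $g\in G(\mbb{B}^+_\dR)$, where $\xi$ is a generator of $\ker\theta$. For such a pair, both trace filtrations are computed representation by representation as the images of the weight decompositions, so they are exact $\otimes$-filtrations with $\mc{O}$-local-system graded pieces. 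By the Bialynicki-Birula map, the Hodge filtration is of type $[\mu^{-1}]$ and the Hodge--Tate filtration is of type $[\mu]$; the signs come from applying the convention of \cite[\S 3]{howe-klevdal:ap-pahsII} to the relative positions $(\mc{L}_0,\mc{L})$ and $(\mc{L},\mc{L}_0)$ respectively.

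The converses are the main obstacle. I would check them at geometric points: a map from a $v$-sheaf to $\Gr_G$ factors through the locally closed subsheaf $\Gr_{[\mu^{-1}]}$ as soon as every geometric point lands there (membership in a Schubert cell is pointwise, by the Cartan decomposition). So fix $\Spa(C,C^+)$, and let $\nu$ be the relative position of $\mc{L}$ with respect to $\mc{L}_0$. By the Cartan decomposition $G(B_\dR)=G(B^+_\dR)\nu(\xi)G(B^+_\dR)$, the trace filtration $\trFil_{\mc{L}_0}(\mc{L}\otimes C)$ is a filtration of type $\nu$ with the sign convention above. Its type determines $\nu$: for each $V\in\Rep\,G$ the jumps of the trace filtration are the weights of $\nu$ on $V$, and the conjugacy class of a cocharacter is determined by its weights on all representations (or on a faithful one together with its tensor constructions). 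Hence if the Hodge filtration has type $[\mu^{-1}]$, then $\nu\in[\mu^{-1}]$, which is (3). The same argument with the roles of the two lattices swapped shows that (2) implies (3).

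The delicate point is matching sign and inversion conventions for the type of a trace filtration with those of $\Gr_{[\mu^{-1}]}$ and of the Hodge--Tate versus Hodge cells. I would verify these once on $G=\mbb{G}_m$ with $\mu(z)=z$ and then rely on Tannakian functoriality for general $G$.
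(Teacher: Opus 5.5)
\paragraph{Where the proposal falls short.} Your reduction to the pair of lattices $(\mc{L}_0,\mc{L})$ over $\tilde{S}_{\mc{G},\et}$ follows the same route as the paper. You get (3)$\Rightarrow$(1),(2) from the Bialynicki-Birula construction and check the converses at geometric points. The paper does the same by citing \cite[Theorem 3.5]{howe-klevdal:ap-pahsII}, with \cite[Proposition 19.4.2]{scholze:berkeley} as the key input. For reductive $G$ your argument goes through once the input discussed in the second paragraph is supplied.

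However, the statement is for an arbitrary connected linear algebraic group, which matters here because motivic Galois groups of admissible pairs need not be reductive. For such $G$ your pointwise step fails: the Cartan decomposition $G(B_\dR)=\bigcup_\nu G(B^+_\dR)\nu(\xi)G(B^+_\dR)$ is false for non-reductive $G$. For $G=\mbb{G}_a$ the only cocharacter is trivial, while $\Gr_G(C)=B_\dR/B^+_\dR$. Such points are actually hit by $\pi_{\mc{L}_\dR}$. Modify the trivial $\mbb{G}_a$-bundle on $\FF_C$ at $u^{-1}\mc{L}_0$ with $u=\xi^{-k}$, $k\geq 1$. Every $\mbb{G}_a$-bundle on $\FF_C$ is trivial since $H^1(\FF_C,\mc{O})=0$, so this gives a $\mbb{G}_a$-admissible pair over $\Spd C$ whose de Rham lattice, in an \'etale trivialization, is $u\mc{L}_0$. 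At such a point there is no relative position $\nu$. So your steps ``let $\nu$ be the relative position'' and ``membership in a Schubert cell is pointwise, by the Cartan decomposition'' have no content.

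What excludes this point from (1) is exactness, not type. On the standard $2$-dimensional representation the Hodge filtration jumps at $\pm k$, while on the trivial sub and quotient it is concentrated in degree $0$. So for general $G$ you are missing a lemma: an exact $\otimes$-trace filtration of type $[\mu^{-1}]$ (resp.\ $[\mu]$) forces $\mc{L}$ into the $L^+G$-orbit of $\mu^{-1}(\xi)\mc{L}_0$. One possible route:
\begin{itemize}
\item use the Cartan decomposition only inside each $\GL(V)$, where it holds;
\item use a Levi decomposition $G=M\ltimes R_uG$ to reduce to $\mc{L}=u\,\mu^{-1}(\xi)\mc{L}_0$ with $u\in R_uG(B_\dR)$, up to $L^+G$;
\item try to bound $u$ using the $\GL(V)$-relative positions.
\end{itemize}
The same comparison is needed to justify your $v$-local presentation in (3)$\Rightarrow$(1),(2) when $G$ is not reductive.

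\paragraph{Where the real difficulty lies.} The converses are the soft, pointwise part. The substance is (3)$\Rightarrow$(1),(2) over a general base: the trace filtrations must have locally free graded pieces and an exact $\otimes$-structure in families. You dispose of this by asserting that $\mc{L}=g\mu(\xi)^{-1}\mc{L}_0$ holds $v$-locally. That does not follow from the pointwise Cartan decomposition. It is the $v$-surjectivity of $L^+G\to\Gr_{[\mu^{-1}]}$, $g\mapsto g\mu(\xi)^{-1}\mc{L}_0$. This is exactly the family-level input the paper attributes to \cite[Proposition 19.4.2]{scholze:berkeley}, and it has to be cited or proved rather than stated.
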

\begin{proof}
This follows from \cite[Theorem 3.5]{howe-klevdal:ap-pahsII} which is stated when $S$ is a diamond, but whose proof applies to any small $v$-sheaf (note that (1) if and only if (2) is straightforward, and that the main ingredient in showing that (3) implies (1) and (2) is \cite[Proposition 19.4.2]{scholze:berkeley}).
\end{proof}

\subsubsection{}\label{sss.filtration-period-maps} In particular, when $\mc{G}$ is of type $[\mu]$, we have a Hodge-Tate period map $\pi_\HT: \tilde{S}_{\mc{G},\et} \rightarrow \Fl_{[\mu]}$ classifying $\Fil^\bullet_\HT \mc{G}$ (viewed as filtration on the trivial $G(\mc{O})$-torsor via $\rho_\et \otimes_{\ul{\mbb{Q}_p}}\mc{O}$); if it is moreover $[b]$-pure, then we have a Hodge period map $\pi_{\Hdg}: \tilde{S}_{\mc{G},b} \rightarrow \Fl_{[\mu^{-1}]}$ classiyfing $\Fil^\bullet_\Hdg \mc{G}$ (viewed as filtration on the trivial $G(\mc{O})$-torsor via $\rho_b \boxtimes \mc{O}$). 
For $\BB \colon \Gr_{[\mu^{\pm 1}]} \to \Fl_{[\mu^{\mp 1}]}$ the Bialynicki-Birula map, we can extend the commutative diagram \cref{fig:lattice-period-maps} with filtration period maps:
\[\begin{tikzcd}
	{\tilde{S}_{\mc{G},b}} && {\mc{M}_{b, [\mu]}} && {\tilde{S}_{\mc{G},\et}} \\
	& {\Gr_{[\mu]}^{\badm}} && {\Gr_{[\mu^{-1}]}} \\
	& {\Fl_{[\mu^{-1}]}} && {\Fl_{[\mu]}}
	\arrow["{\pi_{\mc{L}_\et}}"{description}, from=1-1, to=2-2]
	\arrow["{\pi_{\Hdg}}"{description}, curve={height=12pt}, from=1-1, to=3-2]
	\arrow["{\pi_{\mc{L}_\et}}"{description}, from=1-3, to=2-2]
	\arrow["{\pi_{\mc{L}_\dR}}"{description}, from=1-3, to=2-4]
	\arrow["{\pi_\Hdg}"{description}, curve={height=-6pt}, from=1-3, to=3-2]
	\arrow["{\pi_\HT}"{description}, curve={height=6pt}, from=1-3, to=3-4]
	\arrow["{\pi_{\mc{L}_\dR}}"{description}, from=1-5, to=2-4]
	\arrow["{\pi_\HT}"{description}, curve={height=-12pt}, from=1-5, to=3-4]
	\arrow["\BB"{description}, from=2-2, to=3-2]
	\arrow["\BB"{description}, from=2-4, to=3-4]
\end{tikzcd}\]

\section{Admissible pairs of potential good reduction and variations of $p$-adic Hodge structure}\label{s.variations-padic-hs}

Let $L$ be a $p$-adic field and let $X/L$ be a smooth rigid analytic variety. Like the notion of a continuous family of Hodge structures on a complex manifold, the definition of a basic admissible pair/$p$-adic Hodge structure on $X^\diamond$ is too general when one is primarily interested in geometric families. Given the additional structure that rigid analytic varieties have (e.g.\ a robust differential theory), one may wonder about the possibility of refining the coarse notion of a basic admissible pair/$p$-adic Hodge structure on $X^\diamond$ of \cref{s.admissible-pairs} into something more akin to a complex variation of Hodge structure. Moreover, one would like to clarify the relation of basic admissible pairs/$p$-adic Hodge structures to well established notions in relative $p$-adic Hodge theory such as de Rham local systems. 

This section seeks to provide an answer to these questions. First, we define the notion of an admissible pair with \emph{potential good reduction} (\cref{defn:good-reduction-adm-pair}). This is an admissible pair on $X^\diamond$ that is neutralized on a potentially unramified pro-\'etale cover $\tilde{X}^\diamond$ of $X^\diamond$. Simple examples show that $\tilde{X}^\diamond$ need not be rigid analytic, but nevertheless we expect that it behaves in many ways as if it were. Namely, we expect that
\begin{enumerate}
    \item (\cref{conj.pot-unram-prop}) $\tilde{X}^\diamond$ is the diamond associated to an adic space $\hat{\tilde{X}}$ that has a good theory of differentials and, moreover, maps from $\tilde{X}^\diamond$ to $\Gr_{[\mu]}$ are in bijection with maps from $\hat{\tilde{X}}$ to the flag variety $\Fl_{[\mu^{-1}]}$ that satisfy Griffiths transversality. 
    \item (\cref{conj.transcendence}) Neutral admissible pairs on $\tilde{X}^\diamond$ have similar transcendence properties to those on rigid analytic diamonds: if $A$ is an admissible pair on $\tilde{X}^\diamond$ then the image of the Hodge-Tate period map is Zariski dense in the flag variety for the motivic Galois group of $A$. 
\end{enumerate}
For the link to de Rham local systems, we introduce the notion of a \emph{variation of $p$-adic Hodge structure} on $X$ (\cref{def.vpahs}, \cite[\S 10.1]{howe-klevdal:ap-pahsII}). This is a $\QQ$-graded de Rham local system $V = \oplus_{\lambda} V_\lambda$ on $X$ such that the canonical lattice $\mbb{M}_0(V) = D_\dR(V) \otimes_{\mc{O}_{X}} \mbb{B}_\dR^+$ (naturally $\QQ$-graded) given by applying the relative de Rham functor of \cite[Theorem 1.5]{liu-zhu:riemann-hilbert} naturally yields a $p$-adic Hodge structure on $X^\diamond$. We have the following expectation
\begin{enumerate}[resume]
    \item (\cref{conj.pot-good-red}) The equivalence between basic admissible pairs and $p$-adic Hodge structures on $X^\diamond$ restricts to an equivalence between basic admissible pairs with potential good reduction and variations of $p$-adic Hodge structure on $X$. 
\end{enumerate}

Finally, we comment that Conjectures \ref{conj.pot-unram-prop} and \ref{conj.pot-good-red} are motivated by the expectation/desire that variations of $p$-adic Hodge structure behave completely analogously to variations of Hodge structure on a complex manifold. Namely --- if we assume these conjectures --- given a variation of $p$-adic Hodge structure $V$ on a geometrically connected rigid analytic variety $X$, the associated basic admissible pair on $X$ is neutral when pulled back to the cover $\hat{\tilde{X}}$. The Hodge period map of this neutral admissible pair yields a morphism
    \[ \pi_\Hdg \colon \hat{\tilde{X}} \to \Fl_{[\mu^{-1}]}, \]
which is Griffiths transverse since it arises by reduction from the \'etale lattice period map $\pi_{\mc{L}_\et} \colon \tilde{X}^\diamond \to \Gr_{[\mu]}$. Moreover, the derivative $d\pi_{\Hdg}$ of the Hodge period map is the Kodaira-Spencer morphism from the connection on the associated $G$-bundle $D_\dR(V)$. In \cref{ss.inscribed-period-maps}, we show how a version of the above results can be obtained \emph{unconditionally} using the differential theory of \cite{Howe.InscriptionTwistorsAndPAdicPeriods}, which we view as providing reasonable evidence for Conjectures \ref{conj.pot-unram-prop} and \ref{conj.pot-good-red}.

\subsection{Potentially unramified towers}

\begin{definition}
{(\cite[Definition 3.3]{hubner:adic-tame-site}.)}
A map $f \colon Y \to X$ of {strongly Noetherian} adic spaces is \emph{strongly \'{e}tale} at $y \in |Y|$ if it is \'etale at $y$ and the extension of residue fields $\kappa(y)/\kappa(f(y))$ is unramified with respect to the valuations $|\cdot(y)|$ and $|\cdot(f(y))|$. A map $f \colon Y \to X$ is strongly \'etale if it is strongly \'etale at every point $y \in |Y|$.
\end{definition}

Since we are working with arbitrary valuations, we recall what it means for a finite extension $E/F$ of valued fields to be unramified. Let $E^+, F^+$ be the valuation rings of $E, F$ respectively, so that $F^+ = E^+ \cap F$. Choose $\ol{E}$ an algebraic closure of $E$, and let $E^{+,\mr{sh}} \subseteq \ol{E}$ be the strict henselization of $E^+$ \cite[\href{https://stacks.math.columbia.edu/tag/04GQ}{Tag 04GQ}]{stacks-project}. Note $E^{+, \mr{sh}}$ can be computed as $\colim_{(R, \mf{q})} R$ where the indices range over subrings $E^+ \subseteq R \subseteq \ol{E}$ such that $E^+ \subseteq R$ is \'etale and $\mf{q} \cap E^+$ is the maximal ideal of $E^+$. We let $E^{\mr{sh}} = \mr{Frac}(E^{+,\mr{sh}})$, and write $F^{+, \mr{sh}}, F^{\mr{sh}}$ for the strict henselization of $F^+$ and its fraction field, also computed as a subring of $\ol{E}$. Note that there is always an inclusion $F^{+, \mr{sh}} \subseteq E^{+, \mr{sh}}$ since if $R$ appears in the colimit for $F^{+, \mr{sh}}$, then the image of $R \to R \otimes_{F^+} E^+ \to \ol{E}$ is an \'etale $E$-subalgebra contained in $E^{+,\mr{sh}}$.  The extension $E/F$ of valued fields is (by definition) unramified if $F^{\mr{sh}} = E^{\mr{sh}}$ as subrings of $\ol{E}$. This definition is independent of the algebraic closure $\ol{E}$ by \cite[\href{https://stacks.math.columbia.edu/tag/04GU}{Tag 04GU}]{stacks-project}. 

As an immediate consequence of the above discussion, for a tower $M/E/F$ of finite extensions of valued fields, the extension $M/F$ is unramified if and only if both $M/E$ and $E/F$ are unramified. This immediately yields the following.
\begin{lemma}\label{lem.composition-strongly-etale}
Let $f \colon Y \to X$ and $g \colon Z \to Y$ be \'etale maps of strongly Noetherian adic spaces and $z \in |Z|$. Then $g \circ f$ is strongly \'etale at $z$ if and only if $g$ is strongly \'etale at $z$ and $f$ is strongly \'etale at $g(z)$.     
\end{lemma}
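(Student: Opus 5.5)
Note first that the composite of $g \colon Z \to Y$ followed by $f \colon Y \to X$ is $f \circ g$; the statement writes $g\circ f$, but the intended meaning is clear. The plan is to reduce the lemma pointwise to the tower property of unramified extensions of valued fields stated just before it. Fix $z \in |Z|$ and put $y = g(z)$ and $x = f(y)$. Being \'etale is already assumed for $f$ and $g$, and an \'etale map is \'etale at every point. Hence $f\circ g$ is \'etale at $z$, $g$ is \'etale at $z$, and $f$ is \'etale at $y$. By the definition of strongly \'etale, the claim therefore becomes a statement purely about residue fields: $\kappa(z)/\kappa(x)$ is unramified if and only if $\kappa(z)/\kappa(y)$ and $\kappa(y)/\kappa(x)$ are both unramified, each taken with respect to the valuations $|\cdot(z)|$, $|\cdot(y)|$, $|\cdot(x)|$.

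To apply the tower statement, I will check two things. First, the residue field extensions are finite. This holds because an \'etale morphism of strongly Noetherian adic spaces is locally a composite of an open immersion and a finite \'etale map, so $\kappa(z)/\kappa(y)$ and $\kappa(y)/\kappa(x)$ are finite separable. Second, the valuations are compatible, meaning $|\cdot(z)|$ restricts to $|\cdot(y)|$ on $\kappa(y)$, and likewise for $y$ over $x$. This is just the definition of the induced map $|Z|\to|Y|$ on points. So $\kappa(z)/\kappa(y)/\kappa(x)$ is a tower of finite extensions of valued fields.

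It remains to justify the tower property. Choose a single algebraic closure $\overline{\kappa(z)}$, and compute all strict henselizations and their fraction fields inside it; this is legitimate by the independence of the choice of algebraic closure recalled above. The inclusions $\kappa(x)^{\mr{sh}} \subseteq \kappa(y)^{\mr{sh}} \subseteq \kappa(z)^{\mr{sh}}$ hold by the observation in the text. Unramifiedness of $\kappa(z)/\kappa(x)$ is the equality of the two outer terms. That equality holds if and only if both intermediate inclusions are equalities, i.e.\ if and only if both steps are unramified.

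The only point requiring any care is this compatibility of choices: the strict henselizations for the two sub-extensions must be taken in one common algebraic closure, with separable points chosen compatibly so that the inclusions chain. I expect that to be the only (mild) obstacle; everything else is unwinding definitions.
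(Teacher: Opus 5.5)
Your proposal is correct and takes the same route as the paper: the paper deduces the lemma immediately from the tower property of unramified extensions of valued fields, and that property comes from the chain of inclusions $F^{\mathrm{sh}} \subseteq E^{\mathrm{sh}} \subseteq M^{\mathrm{sh}}$ inside a common algebraic closure, exactly as in your argument (you are also right that the composite should read $f\circ g$). Your extra checks make explicit what the paper leaves implicit: finiteness of the residue field extensions via the local structure of \'etale maps, compatibility of the valuations, and computing all strict henselizations with respect to one valuation on a single algebraic closure.
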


\begin{definition}\label{defn.potentially-unramified}
Let $L$ be a $p$-adic field, let $X/L$ be a rigid analytic variety, and let $\tilde{X} \in X_\proet$. 
\begin{enumerate}
    \item We say $\tilde{X}$ is \emph{unramified} if it admits a pro-\'{e}tale presentation $\tilde{X} \cong (X_i)_{i \in I}$ where each map $X_i \to X$ is strongly \'etale (in particular, the transition maps are strongly \'etale).
    \item We say $\tilde{X}$ is \emph{potentially unramified} if there is an \'{e}tale cover $\{U_j \rightarrow X\}$ such that, for each index $j$, $\tilde{X} \times_X U_j \rightarrow U_j$ is unramified (as an element of $U_{j, \proet}$).
\end{enumerate}
\end{definition}

We record a stability properties of (potentially) unramified objects.

\begin{lemma}\label{lemma.pur-product}
    If $\tilde{X}$ and $\tilde{Y}$ are (potentially) unramified objects in $X_\proet$, then $\tilde{X} \times_X \tilde{Y}$ is a (potentially) unramified object in $X_\proet$. 
\end{lemma}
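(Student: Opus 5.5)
We first treat the unramified case, and then deduce the potentially unramified case by passing to a common étale refinement.

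\emph{Unramified case.} Choose pro-\'etale presentations $\tilde{X} \cong (X_i)_{i \in I}$ and $\tilde{Y} \cong (Y_j)_{j \in J}$ in which every structure map $X_i \to X$ and $Y_j \to X$ is strongly \'etale. The fiber product $\tilde{X}\times_X \tilde{Y}$ in $X_\proet$ is presented by the cofiltered system $(X_i \times_X Y_j)_{(i,j) \in I \times J}$. For $(i,j)\ge(i',j')$ large, the transition map factors as
\[X_i\times_X Y_j \to X_{i'}\times_X Y_j \to X_{i'}\times_X Y_{j'}.\]
Each factor is a base change of a finite \'etale map, so the transition maps are finite \'etale. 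It therefore suffices to show that $X_i\times_X Y_j \to X$ is strongly \'etale. By \cref{lem.composition-strongly-etale}, this composite is
\[X_i \times_X Y_j \to Y_j \to X,\]
and $Y_j\to X$ is strongly \'etale by hypothesis. So we reduce to showing that strongly \'etale maps are stable under base change along \'etale maps: if $X_i\to X$ is strongly \'etale and $Y_j\to X$ is \'etale, then $X_i\times_X Y_j\to Y_j$ is strongly \'etale.

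\emph{Main obstacle: base change of strong \'etaleness.} Let $z\in |X_i\times_X Y_j|$ have images $x_i$, $y$, $x$ in $X_i$, $Y_j$, $X$. The residue field extensions $\kappa(x_i)/\kappa(x)$ and $\kappa(y)/\kappa(x)$ are finite, since the maps are \'etale. The field $\kappa(z)$ is a quotient of (the completion of) the tensor product $\kappa(x_i)\otimes_{\kappa(x)}\kappa(y)$, so it is a compositum of $\kappa(x_i)$ and $\kappa(y)$ inside an algebraic closure. Its valuation is compatible with the valuations on both factors.

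To control ramification, we work with strict henselizations inside a common algebraic closure $\overline{E}$ of $\kappa(z)$. The hypothesis gives $\kappa(x)^{\mathrm{sh}} = \kappa(x_i)^{\mathrm{sh}}$, so that $\kappa(x_i)\subseteq \kappa(x)^{\mathrm{sh}}\subseteq \kappa(y)^{\mathrm{sh}}$. Hence $\kappa(z)$, being generated by $\kappa(x_i)$ and $\kappa(y)$, lies in $\kappa(y)^{\mathrm{sh}}$. Since $\kappa(y)^{\mathrm{sh}}$ is already strictly henselian, and the strict henselization of $\kappa(z)$ is sandwiched between $\kappa(y)^{\mathrm{sh}}$ and itself, we conclude $\kappa(z)^{\mathrm{sh}}=\kappa(y)^{\mathrm{sh}}$. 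This is the unramified condition for $\kappa(z)/\kappa(y)$, and hence $X_i\times_X Y_j\to Y_j$ is strongly \'etale at $z$.

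The delicate points are two. First, we must check that strict henselizations behave well under passing to these subfields of $\overline{E}$, using the colimit description and independence of the algebraic closure \cite[\href{https://stacks.math.columbia.edu/tag/04GU}{Tag 04GU}]{stacks-project}. Second, we must handle the fact that residue fields of adic spaces are completed, or use the uncompleted local fields; with finite extensions this causes no issue, since the valuations are unique up to the chosen point.

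\emph{Potentially unramified case.} Take étale covers $\{U_k\to X\}$ and $\{V_l\to X\}$ that witness potential unramifiedness for $\tilde X$ and $\tilde Y$ respectively. The family $\{U_k\times_X V_l\to X\}$ is an \'etale cover. Base changing the unramified object $\tilde X\times_X U_k$ along $U_k\times_X V_l\to U_k$ stays unramified, by the base change statement above applied to each term of the presentation together with \cref{lem.composition-strongly-etale}. The same holds for $\tilde Y$. Over each $U_k\times_X V_l$, the unramified case then shows that
\[(\tilde{X}\times_X\tilde{Y})\times_X (U_k\times_X V_l)\]
is unramified, which proves the claim.
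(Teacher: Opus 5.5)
Your proof is correct and takes the same approach as the paper. The paper's proof is one line: it cites H\"ubner for the fact that a fiber product of strongly \'etale maps is strongly \'etale. You instead derive that fact from the composition lemma and a direct strict-henselization argument for base change along \'etale maps, and you also write out the presentation and \'etale-refinement bookkeeping for the potentially unramified case, which the paper leaves implicit.
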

\begin{proof}
    This follows since the product of two strongly \'{e}tale maps is strongly \'{e}tale (\cite[Top of p.885]{hubner:adic-tame-site}). 
\end{proof}

Recall that any $\tilde{X} \in X_\proet$ admits an underlying topological space $|\tilde{X}|=\lim_{i \in I} |X_i|$ and an associated diamond $\tilde{X}^\diamond=\lim_{i \in I} X_{i}^\diamond$ such that $|\tilde{X}^\diamond|=|\tilde{X}|.$ A potentially unramified $\tilde{X}$ is essentially the opposite of a perfectoid object --- instead of admitting enough ramification at $p$ to become a perfectoid space after completion, it admits so little ramification at $p$ that we expect it to still behave similarly to a rigid analytic variety after completion. 

As a first example of this phenomenon, we show an analog of the density of classical points in a rigid analytic variety. For $L$ a $p$-adic field,  
recall from \cite[Definition 2.1]{howe-klevdal:ap-pahsII} that a rigid analytic point of $Y/\Spd\+ L$ is a map $\Spd\+ L' \rightarrow \tilde{X}^\diamond$ for $L'/L$ a finite extension.

\begin{lemma}\label{lemma.pur-density}
Let $L$ be a strict $p$-adic field (i.e. a $p$-adic field with algebraically closed residue field). If $X/L$ is a rigid analytic variety and $\tilde{X} \in X_\proet$ is a potentially unramified tower, then (the images of) rigid analytic points are dense in $|\tilde{X}|=|\tilde{X}^\diamond|.$
\end{lemma}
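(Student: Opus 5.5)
We reduce the statement to a local one about unramified towers and then show that rigid analytic points survive all the way up the tower. Since $|\tilde{X}|$ is covered by the images of $|\tilde{X}\times_X U_j|$ for an \'etale cover $\{U_j\to X\}$, and \'etale maps of rigid spaces over $L$ send classical points to classical points, it is enough to treat the case where $\tilde{X}$ itself is unramified. Replacing $X$ by an affinoid open, we may also assume $X$ is affinoid. So fix a presentation $\tilde{X}=(X_i)_{i\in I}$ in which every $X_i\to X$ is strongly \'etale.

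Recall that $|\tilde{X}|=\varprojlim |X_i|$, where the transition maps are eventually finite \'etale. A basis of quasicompact opens of $|\tilde{X}|$ is therefore given by preimages of quasicompact opens $V\subseteq |X_{i_0}|$. Fix such a nonempty $V$. We need a compatible family of points $x_i\in|X_i|$, for $i\ge i_0$, with $x_{i_0}\in V$, whose residue fields are all contained in one finite extension $L'/L$. Such a family gives a map $\Spd\, L'\to\tilde{X}^\diamond$ whose image lies in $V$.

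The key input is strong \'etaleness. Choose a classical point $x_{i_0}\in V$, which exists because classical points are dense in a rigid space. Its residue field is finite over $L$; call it $L'$. For $i\ge i_0$, the fiber of $X_i\to X_{i_0}$ over $x_{i_0}$ is a finite \'etale $L'$-scheme. Each of its points is a finite extension of $L'$ that is unramified, by \cref{lem.composition-strongly-etale} together with the definition of a strongly \'etale map at a rank-one point. Since $L$ is strict, its residue field is algebraically closed, and so is that of $L'$; hence every finite unramified extension of $L'$ is $L'$ itself. So every fiber consists of $L'$-rational points. The fibers form a cofiltered system of nonempty finite sets: they are nonempty because the transition maps are eventually finite \'etale and surjective onto their image near $x_{i_0}$. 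Its inverse limit is therefore nonempty, and any element gives a compatible system of $L'$-points, i.e.\ the desired $\Spd\, L'\to\tilde{X}^\diamond$ landing in $V$.

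The main obstacle is surjectivity (nonemptiness) of the fibers. The transition maps are only finite \'etale for large $i$ and need not be surjective, so the image of $|\tilde{X}|\to|X_{i_0}|$ may be a proper closed-open subset. I would handle this by starting with a point of $|\tilde{X}|$ lying over $V$ rather than a point of $V$. Concretely, since the open set $U$ of $|\tilde{X}|$ is nonempty, I would first pass to a larger index $i_1$ beyond which all transitions are finite \'etale. I would then replace $V$ by a quasicompact open $V'\subseteq|X_{i_1}|$ contained in the image of $|\tilde{X}|$; this is possible since images of finite \'etale maps are open and closed. Then I pick the classical point in $V'$. A second, minor point is checking that ``unramified'' in the strict henselization sense agrees, for rank-one points, with the classical notion; this should follow directly from the definition recalled above.
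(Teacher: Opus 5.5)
Your strategy is the same as the paper's. You reduce to an unramified presentation, choose a classical point in a basic open at some finite level, and use \cref{lem.composition-strongly-etale} together with the algebraically closed residue field of $L'$ to see that every point above it is $L'$-rational. The paper simply asserts that the classical point lifts compatibly to every $X_j$. You are right that this needs nonempty fibers, which is not automatic, because the transition maps in the definition of $X_\proet$ used here need not be surjective. (Your first justification, that the maps are ``surjective onto their image'', is circular, but you then correctly treat nonemptiness as the real issue.)

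Your fix, however, has a gap as stated. The image of $|\tilde{X}|$ under the projection $\mathrm{pr}_{i_1}\colon |\tilde{X}|\to|X_{i_1}|$ is not the image of a single finite \'etale map. It is the cofiltered intersection $\bigcap_{i} \mathrm{im}(|X_i| \to |X_{i_1}|)$ over $i$ above $i_1$; the nontrivial inclusion is your compactness argument with nonempty finite fibers. An infinite intersection of open and closed sets need not be open, so ``images of finite \'etale maps are open and closed'' does not produce your $V'$.

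The missing input is that a quasi-compact rigid space has only finitely many open and closed subsets, since an affinoid algebra is Noetherian and so has finitely many idempotents. Hence, for a quasi-compact open $U_1 \subseteq X_{i_1}$, the downward directed family $\mathrm{im}(|X_i| \to |X_{i_1}|) \cap U_1$ has a smallest member, and that member equals the intersection. So $\mathrm{im}(|\tilde{X}|) \cap U_1$ is open and closed in $U_1$, and it is nonempty whenever the basic open $\mathrm{pr}_{i_1}^{-1}(U_1)$ is. Any classical point $x$ in it then has a preimage $\tilde{x} \in |\tilde{X}|$. Your residue field argument shows every coordinate of $\tilde{x}$ is an $L'$-point, giving the required $\Spd\, L' \to \tilde{X}^\diamond$.

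Equivalently, replace each $X_i$ by the open and closed image of $|\tilde{X}|$. This changes neither $|\tilde{X}|$ nor $\tilde{X}^\diamond$ and makes the transition maps surjective, after which the paper's one-line lifting argument applies verbatim.
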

\begin{proof}
We can replace $X$ by an \'etale cover, and assume that $\tilde{X}$ is unramified. Let $(X_i)_{i \in I}$ be a presentation of $\tilde{X}$ with each $X_j \to X$ strongly \'etale. Any open in $|\tilde{X}| = \varprojlim_i |X_i|$ contains the preimage of an open $U \subseteq X_{i_0}$ for some index $i_0$, so it suffices to show that there is morphism $\Spa\, L' \to \tilde{X}$ (for $L'/L$ finite) whose image after projection to $X_{i_0}$ lies in $U$. To produce the desired map $\Spa\, L' \to \tilde{X}$, choose a classical point $x \in U(L')$ for $L'$ a finite extension of $L$ and then lift $x$ compatibly to an $L'$-valued points of each $X_j$, using that each $X_j \to X_{i_0}$ is strongly \'etale (by \cref{lem.composition-strongly-etale}) and that the residue field of $L'$ is algebraically closed since $L$ is strict. 

\end{proof}

\subsubsection{} Note that, for any object $\tilde{X}$ of $X_\proet$, we can restrict the completed structure sheaf $\hat{\mc{O}}$ on $X_\proet$ to a sheaf $\hat{\mc{O}|}_{\tilde{X}}$ on $|\tilde{X}|$ (using \cite[Lemma 3.10-(iii)]{scholze:p-adic-ht}). For any point $\tilde{x} \in |\tilde{X}|$, there is an associated valuation $v_{\tilde{x}}$ on the stalk $\hat{\mc{O}|}_{\tilde{X}}$ at $\tilde{x}$ induced by \cite[Lemma 4.2-(iv)]{scholze:p-adic-ht}. The following conjecture says that we expect potentially unramified towers over smooth rigid analytic varieties to have associated adic spaces with very nice differential properties analogous to smooth rigid analytic varieties: 

\begin{conjecture}\label{conj.pot-unram-prop}
Suppose $L$ is a $p$-adic field, $X/L$ is a smooth rigid analytic variety and $\tilde{X}/X$ is a potentially unramified tower. 
\begin{enumerate}
    \item  The triple $\hat{\tilde{X}}:=(|\tilde{X}|, \hat{\mc{O}}|_{|\tilde{X}|}, (v_{\tilde{x}})_{\tilde{x} \in |\tilde{X}|})$ is an adic space and $\left(\hat{\tilde{X}}\right)^\diamond=\tilde{X}^\diamond$.  
    \item The natural map $\mathcal{O}\mbb{B}^+_\dR|_{|\tilde{X}|} \rightarrow \hat{\mc{O}}|_{|\tilde{X}|}$
    is an isomorphism.
    \item Writing $f: \hat{\tilde{X}} \rightarrow X$ for the induced map of adic spaces, the derivation $d: \mc{O}_{\hat{\tilde{X}}} \rightarrow f^*\Omega_{X/L}$ induced by (2) and the usual derivation $\mc{O}\mbb{B}^+_\dR \rightarrow \mc{O}\mbb{B}^+_\dR \otimes_{\mc{O}} \Omega_{X/L}$ restricts, on any affinoid $\Spa(R,R^+)$ of $\hat{\tilde{X}}$, to the universal continuous derivation over $L$ from $R$ to an $L$-Banach module. 
    \item For $G/L$ a connected linear algebraic group and $[\mu]$ a conjugacy class of cocharacters of $G_{\overline{L}}$ defined over $L$, the Bialynicki-Birula map $\Gr_{[\mu]} \rightarrow \Fl_{[\mu^{-1}]}^\diamond$  induces a bijection between $\Gr_{[\mu]}(\tilde{X}^\diamond)$ and the subset of 
    \[ \Fl_{[\mu^{-1}]}(\tilde{X}^\diamond)=\Fl_{[\mu^{-1}]}(\hat{\tilde{X}})\] consisting of filtrations on the trivial $G$-torsor satisfying Griffiths transversality for the trivial connection. 
\end{enumerate}
\end{conjecture}

Note that \cref{conj.pot-unram-prop}-(4) is an analog of \cite[Theorem 5.4-(4)]{howe-klevdal:ap-pahsII} for rigid analytic varieties; more generally, one also expects an analog of \cite[Theorem 3.7]{howe-klevdal:ap-pahsII}. 

\subsection{Admissible pairs of (potential) good reduction}
\newcommand{\pgr}{\mathrm{pgr}}

\begin{definition}\label{defn:good-reduction-adm-pair}
Let $L$ be a $p$-adic field and let $X/\Spa\+ L$ be a smooth rigid analytic variety. An admissible pair $A$ on $X^\diamond$ has \emph{(potential) good reduction} if there is a cover $\{\tilde{U}_i \rightarrow X\}$ of $X$ by (potentially) unramified objects in $X_\proet$ such that $A|_{\tilde{U}_i^\diamond}$ is neutral. We write $\AdmPair^\mr{gr}$ (resp. $\AdmPair^{\pgr}(X^\diamond)$) for the full sub-category of $\AdmPair(X^\diamond)$ consisting of those admissible pairs of good reduction (resp. potential good reduction). 
\end{definition}

\begin{proposition}
$\AdmPair^{\mr{gr}}(X^\diamond)$ and $\AdmPair^{\pgr}(X^\diamond)$ are both abelian subcategories of $\AdmPair(X^\diamond)$ stable under tensor products and subquotient.
\end{proposition}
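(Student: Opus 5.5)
The strategy is to reduce everything to the neutral setting on a common refinement of covers, where the statement follows from the abelian and tensor structure established in \cref{theorem.ap-hs-properties}-(4) together with the corresponding facts for neutral admissible pairs from \cite{howe-klevdal:ap-pahsII}. Since $X$ is a smooth rigid analytic variety, $X^\diamond$ is a locally spatial diamond and $\pi_0(X^\diamond)$ is discrete. Hence, by \cref{theorem.ap-hs-properties}-(4), $\AdmPair(X^\diamond)$ is abelian. It therefore suffices to show that the full subcategories $\AdmPair^{\mr{gr}}(X^\diamond)$ and $\AdmPair^{\pgr}(X^\diamond)$ contain $0$ and are stable under direct sums, tensor products, duals, and subquotients (kernels and cokernels of morphisms within them, and more generally subobjects and quotients in $\AdmPair(X^\diamond)$). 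Stability under subquotients then makes them abelian subcategories.

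\emph{Step 1: common refinement.} Given $A$ and $B$ of (potential) good reduction, choose covers $\{\tilde{U}_i\}$ and $\{\tilde{V}_j\}$ of $X$ by (potentially) unramified objects of $X_\proet$ neutralizing $A$ and $B$ respectively. I would pass to the cover $\{\tilde{U}_i\times_X \tilde{V}_j\}$. To show these objects are (potentially) unramified, I would argue as in \cref{lemma.pur-product}, with the slight generalization that $\tilde{U}_i \to X$ and $\tilde{V}_j\to X$ factor through \'etale maps $U_i, V_j \to X$. I would first localize on $X$ via \cref{defn.potentially-unramified}, then use that products and compositions of strongly \'etale maps are strongly \'etale (\cref{lem.composition-strongly-etale} and \cite{hubner:adic-tame-site}). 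Neutrality is preserved by pullback, so both $A$ and $B$ become neutral on each member of the refined cover.

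\emph{Step 2: tensor operations.} On a member $\tilde{W}$ of the refined cover, $A|_{\tilde{W}^\diamond}$ and $B|_{\tilde{W}^\diamond}$ are in the essential image of the functor \cref{eq.neutral-ap-to-ap}. Because that functor is an exact tensor functor, $A\oplus B$, $A\otimes B$, and $A^\vee$ restricted to $\tilde{W}^\diamond$ are also neutral, coming from $W_A\oplus W_B$, $W_A\otimes W_B$, and $W_A^\vee$.

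\emph{Step 3: subquotients.} This is the main obstacle. Let $A' \subseteq A$ be a subobject in $\AdmPair(X^\diamond)$ with $A$ neutral on $\tilde{W}^\diamond$. I need $A'|_{\tilde{W}^\diamond}$ to be neutral, meaning its underlying HN-split bundle must come from a sub-isocrystal of $W_A$. The plan is to refine $\tilde{W}$ to its connected components, after first making the cover finer by taking the $\tilde{W}$ to be connected and quasicompact. On a connected base, the functor \cref{eq.neutral-ap-to-ap} is fully faithful by the Proposition above. Moreover, subobjects of $\mc{E}(W)$ in $\Vect^{\HNsplit}_\FF$ correspond, slope by slope, to sub-$\mathbb{Q}_{p^d}$-local systems with Frobenius (via \cite[Theorem 8.5.12]{kedlaya-liu:relative-foundations}) of the constant local system attached to $W_\lambda$. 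On a connected base, a sub-local system of a constant local system is constant, because it is determined by its fiber at a point (the full faithfulness used in \cref{theorem.ap-hs-properties}-(4)).

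The delicate point is connectedness of pieces of the cover. I would handle it by replacing each $\tilde{U}_i$ with the preimages of connected affinoid opens of $X$, and then using that $\pi_0(\tilde{U}_i^\diamond)$ is a profinite set. A sub-local system of a constant local system is therefore locally constant on $\pi_0$, which means it is constant after passing to a finer cover by clopen pieces. Clopen subsets of a pro-\'etale object come from clopen subsets at some finite level $X_i$, so the pieces remain (potentially) unramified. Quotients then follow from the cokernel construction, or alternatively by duality from Step 2. Neither step leaves the class of (potentially) unramified covers.
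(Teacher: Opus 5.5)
Your Steps 1 and 2 are the paper's argument. The paper's proof consists only of the appeal to \cref{lemma.pur-product}, which gives a common (potentially) unramified neutralizing cover; on that cover neutral pairs are closed under sums, tensor products and duals.

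Step 3 covers the point the paper leaves implicit, but it does not go through as written. You require $A'|_{\tilde{W}^\diamond}$ to come from a sub-isocrystal of $W_A$ for the given neutralization. To get this on clopen pieces, you claim that a sub-local system of a constant local system is locally constant on the profinite set $\pi_0$. That claim is false. Such a subobject is classified by a continuous map from $\pi_0$ to a compact $p$-adic space of submodules, and this map is typically not locally constant.

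Here is an example. Take $X$ connected and let $A=(\mc{O}_\FF^2,(\mbb{B}^+_\dR)^2)$ be the trivial rank two pair. Let $A'$ be the subpair given by a fixed line $\ell\subseteq\Qp^2$. Let $\tilde{W}=X\times\GL_2(\mbb{Z}_p)$, which is an unramified tower, and neutralize $A$ by the automorphism that is $g$ on the component indexed by $g$. In this neutralization, $A'$ is the line $g^{-1}\ell$ over the component $g$. The fibers of $g\mapsto g^{-1}\ell$ are cosets of the non-open stabilizer of $\ell$. So this subobject is constant on no finite clopen partition of $\tilde{W}$.

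The missing observation is that neutrality only asks for an \emph{isomorphism} $\omega_\FF(A'|_{\tilde{W}^\diamond})\cong\mc{E}(W')$ of HN-split bundles. Equivalently, for each slope the associated local system of modules over the relevant division algebra $D$ must be trivial. It does not need to be a constant submodule of $\ul{M}$ for the given identification. This weaker statement is true and repairs the step:
\begin{enumerate}
\item The slope-$\lambda$ piece of $A'$ is classified by a continuous map $|\tilde{W}|\to\mr{Sub}(M)$, where $\mr{Sub}(M)$ is the compact $p$-adic manifold of $D$-submodules of $M$.
\item Partition $\mr{Sub}(M)$ into finitely many compact opens over which the orbit map from $\GL_D(M)$ has a continuous section.
\item Pull this back to a finite clopen partition of $\tilde{W}$, refining over the finitely many slopes. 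On each piece, the section gives an automorphism of $\ul{M}$ carrying a constant submodule onto $A'$, so $A'$ is neutral there.
\end{enumerate}
Alternatively, apply your connected-base argument on each connected component and project onto the fiber along a $D$-stable complement. Then spread the resulting isomorphism to a clopen neighborhood, using openness of the isomorphism locus and compactness.

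With this replacement, the rest of your argument works as stated: descending clopens to a finite level keeps the pieces (potentially) unramified, and the duality argument handles quotients.
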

\begin{proof}
This follows from \cref{lemma.pur-product}. 
\end{proof}

\begin{example}\label{example.p-adic-field-gr-and-pgr}
Suppose $X = \Spd\+ L$ for $L$ a $p$-adic field. Let $C = \hat{\ol{L}}$ be the completion of an algebraic closure of $L$ and $\mf{G}_L = \Aut_{\mr{cont}}(C/L) \cong \mr{Gal}(\ol{L}/L)$. By descent, the category $\AdmPair(L)$ is equivalent to the category $\AdmPair(C)^{\mf{G}_L}$ of objects with a semilinear action of $\mf{G}_L$. The category $\AdmPair^{\mr{gr}}(L)$ (resp.\ $\AdmPair^{\mr{pgr}}(L)$) of admissible pairs with good reduction (resp.\ potential good reduction) is then equivalent to the subcategory of $\AdmPair(C)^{\mf{G}_L}$ of objects where the semilinear action on the associated isocrystal is unramified (resp.\ potentially unramified). 

The category $\AdmPair^{\mr{gr}}(L)$ has an explicit classical description. Let $L_0$ be the maximal absolutely unramified subfield of $L$, and let $\varphi\text{-}\mr{Mod}_{L/L_0}$ be the category of filtered $\varphi$-modules over $L$ \cite[\S 10.5]{fargues-fontaine:courbes}. Associated to each filtered $\varphi$-module $(W, \Fil^\bullet)$ is the pair $A(W, \Fil^\bullet) := (\mc{E}(W), \Fil^0 (W_L \otimes_{L} B_\dR))$ where $\mc{E}(W)$ in $\Vect(\FF_C)$ is the $\mf{G}_L$-equivariant HN-split vector bundle associated to the $\varphi$-module $W$, and $W_L \otimes_L B_\dR$ has the tensor product filtration for the usual filtration on $B_\dR = \mbb{B}_\dR(C)$. By the classification of vector bundles on the Fargues--Fontaine curve, $A(W, \Fil^\bullet)$ is an admissible pair if and only if $(W, \Fil^\bullet)$ is a weakly admissible filtered $\varphi$-module (see \cite[Proposition 10.5.6]{fargues-fontaine:courbes}). The association $(W, \Fil^\bullet) \mapsto A(W, \Fil^\bullet)$ thus is a fully faithful functor from weakly admissible filtered $\varphi$-modules over $L$ to $\AdmPair^{\mr{gr}}(L)$. To see that it is essentially surjective, suppose $(\mc{E}, \mc{L}_\et) \in \AdmPair(C)^{\mf{G}_L}$ has unramified $\mf{G}_L$-action. Then $\mc{E} = \mc{E}(W)$ for a $\varphi$-module $W$ over $L_0$ \cite[Proposition 10.2.2]{fargues-fontaine:courbes}, and by \cite[Proposition 10.4.3]{fargues-fontaine:courbes} the $\mf{G}_L$-lattice $\mc{L}_\et$ arises as $\Fil^0(W_L \otimes_L B_\dR)$ for a unique filtration $\Fil^\bullet$ on $W_L$. 
\end{example}

\subsubsection{} Let $\mc{G}$ be a $G$-admissible pair on $X^\diamond$. We say $\mc{G}$ has (potential) good reduction if $\mc{G}$ factors through $\AdmPair^\gr(X^\diamond)$ (resp. $\AdmPair^{\pgr}(X^\diamond)$). 

Note that, given a locally profinite group $H$ and an $H$-torsor $\tilde{X}/X^\diamond$, we obtain a canonical element of $X_\proet$, $\tilde{X}^\proet:=(\tilde{X}/K)_{K \leq H \textrm{ compact open}}$ by using the equivalence of the \'{e}tale site of $X$ and $X^\diamond$.  

\begin{proposition}\label{prop.torsor-unramified}
    A $G$-admissible pair $\mc{G}$ on $X^\diamond$ has (potential) good reduction if and only if, for each $[b] \in B(G)$, $(\tilde{X}^\diamond_{\mc{G},b})^\proet / X_{\mc{G},[b]}$ is (potentially) unramified. 
\end{proposition}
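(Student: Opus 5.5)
Both directions come down to one observation: a $G$-admissible pair becomes neutral over a perfectoid $T \to X^\diamond$ landing in $X_{\mc{G},[b]}$ exactly when $T \to X^\diamond$ lifts to $\tilde{X}^\diamond_{\mc{G},b}$. Indeed, a $b$-rigidification over $T$ is a point of $\tilde{X}_{\mc{G},b}(T)$ over $T$. Conversely, if $\omega_\FF\circ\mc{G}|_T \cong \mc{E}_b$ as HN-split $G$-bundles, then $T$ maps to $X_{\mc{G},[b]}$. We will use \cref{thm:bun-HN-split}, which identifies HN-split $G$-bundles locally isomorphic to $\mc{E}_b$ with $\ul{G_b(\QQ_p)}$-torsors, and the fact that the locus $X_{\mc{G},[b]}$ is open and closed. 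That locus is the preimage of a point under the continuous map \eqref{eqn:isocrystal-classifying-map} to a discrete set, so $X$ is the disjoint union of the rigid analytic varieties $X_{\mc{G},[b]}$. We will also use that on $\tilde{X}_{\mc{G},b}$ the pair $\mc{G}$ is tautologically neutral. Finally, since $\omega_\FF$ is a tensor functor, $\mc{G}$ factors through $\AdmPair^{(\mr{p})\gr}$ if and only if $\mc{G}$ itself, viewed on a faithful representation together with the tensor structure, is neutralized on a cover by (potentially) unramified objects. The subcategory is stable under tensor operations and subquotients, and neutrality of a faithful representation spreads to all of $\langle\cdot\rangle$, so we can work directly with $G$-bundles.

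($\Leftarrow$) Suppose each $(\tilde{X}^\diamond_{\mc{G},b})^\proet$ is (potentially) unramified over $X_{\mc{G},[b]}$. Then $\{(\tilde{X}_{\mc{G},b})^\proet\to X\}_{[b]}$ is a pro-étale cover: the pieces $X_{\mc{G},[b]}$ are open and closed, and a torsor is surjective. On each piece the $b$-rigidification makes $\mc{G}$ neutral, which is exactly the definition in \cref{defn:good-reduction-adm-pair}.

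($\Rightarrow$) Take a cover $\{\tilde{U}_i\to X\}$ by (potentially) unramified objects on which $\mc{G}$ is neutral, say $\omega_\FF\circ\mc{G}|_{\tilde U_i}\cong \mc{E}_{b_i}$. Intersecting with the components $X_{\mc{G},[b]}$, we may assume each $\tilde U_i$ lies over one $X_{\mc{G},[b]}$ and that $b_i=b$, after $\sigma$-conjugating, which does not change the torsor up to canonical isomorphism. The rigidification gives a section of $\tilde{X}_{\mc{G},b}$ over $\tilde U_i^\diamond$, and hence a trivialization
\[ \tilde X_{\mc{G},b}\times_{X^\diamond}\tilde U_i^\diamond \cong \ul{G_b(\QQ_p)}\times \tilde U_i^\diamond. \]
Passing to quotients by compact opens $K\le G_b(\QQ_p)$, each finite étale $\tilde X_{\mc{G},b}/K$ becomes split over some finite level $U_{i,j}$ of $\tilde U_i$; at worst this happens over an étale localization plus a finite level. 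Since $\tilde U_i$ is (potentially) unramified, after an étale cover $V\to X_{\mc{G},[b]}$ every $U_{i,j}\to V$ is strongly étale. The component of $\tilde X_{\mc{G},b}/K\times_X V$ through the image of a section is then a quotient of a strongly étale map, so it is strongly étale by \cref{lem.composition-strongly-etale}. Strong étaleness is checked on residue field extensions, and since these sit inside those of $U_{i,j}$, it descends along the surjection $U_{i,j}\to$ image.

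The main obstacle is this last descent step. The covering $\tilde U_i$ may map onto only part of each connected component of $\tilde X_{\mc{G},b}/K\times_X V$, and potential unramifiedness requires a single étale cover of $X$ that works uniformly for all $K$. To handle this, I would use that $G_b(\QQ_p)$ acts transitively on the fibers, so every component of $\tilde X_{\mc{G},b}/K$ is a $G_b(\QQ_p)$-translate of one hit by the section. Since $\tilde U_i$ is a cofiltered limit, I would take the étale cover $V$ to be the one witnessing potential unramifiedness of $\tilde U_i$, which is independent of $K$. Then for each point, unramifiedness of the residue extension of the $K$-level is inherited from a finite level of $\tilde U_i$ dominating it, via the tower property of unramified valued-field extensions recorded before \cref{lem.composition-strongly-etale}.
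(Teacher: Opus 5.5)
Your argument is essentially the paper's: neutrality over $\tilde U$ gives a map $\tilde U^\diamond \to \tilde X^\diamond_{\mc{G},b}$, and its image at level $K$ descends to a section over some finite level $U_j$. \cref{lem.composition-strongly-etale} then gives strong \'etaleness of $X_K \to X$ (where $X_K^\diamond = \tilde X^\diamond_{\mc{G},b}/K$) at every point of that image, and $G_b(\QQ_p)$-translates of the map upstairs on $\tilde X_{\mc{G},b}$, together with surjectivity of $|U_j| \to |X|$, reach every point of $|X_K|$. The paper writes out only this forward direction, in the good reduction case, and leaves the converse as evident.

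One correction: $X_K \to X$ is \'etale but not finite in general. Its geometric fibers are $G_b(\QQ_p)/K$, which is infinite whenever $G_b(\QQ_p)$ is noncompact, e.g.\ for $G=\GL_n$. So to descend the section you should do what the paper does. First reduce to $X$ affinoid with $\tilde U$ presented by qcqs $U_j$. Then note that the image of the quasi-compact space $|\tilde U|$ in $|X_K|$ lies in a qcqs open $V \subseteq X_K$. Only then apply \cite[Proposition 11.23]{scholze:etale-cohomology-of-diamonds} to split $U_j \times_X V \to U_j$ for large $j$.
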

\begin{proof}
We treat the case of good reduction. The question is local on $X$ in the analytic topology, so assume that $X = X_{\mc{G}, [b]}$ for a single $[b] \in B(G)$, $X$ is affinoid, and we have a single unramified cover $\tilde{U} \to X$ with $\mc{G}|_{\tilde{U}^\diamond}$ neutral. Moreover, since $X$ is qcqs, we can assume that $\tilde{U}$ has a presentation $(U_i)_{i \in I}$ with each $U_i$ qcqs. Via the equivalence $X^\diamond_\et \cong X_\et$, the tower of \'etale covers $(\tilde{X}_{\mc{G},b}/K)_{K \leq G_b(\QQ_p)}$ (as $K$ ranges over compact open subgroups of $G_b(\QQ_p)$) arises from a unique tower $(X_K)_{K \leq G_b(\QQ_p)}$ in $X_\proet$. We need to show that this tower is unramified, i.e.\ that each $X_K$ is strongly \'etale over $X$. 

Fix $K \leq G_b(\QQ_p)$ a compact open subgroup. Since $\mc{G}|_{\tilde{U}^\diamond}$ is neutral, there is a map $\tilde{U}^\diamond \to \tilde{X}_{\mc{G},b}^\diamond$. Since $|U|$ is qc, the image of the composition 
    \[ |\tilde{U}| = |\tilde{U}^\diamond| \to |\tilde{X}_{\mc{G},b}^\diamond| \to |\tilde{X}_{\mc{G},b}^\diamond/K| = |X_K| \]
is contained in $|V|$ for $V \subseteq X_K$ a qcqs open. From \cite[Proposition 11.23-(ii), Lemma 15.6]{scholze:etale-cohomology-of-diamonds} we have equalities 
    \begin{align*}
         \Hom_{\tilde{U}^\diamond_\et}(\tilde{U}^\diamond, \tilde{U}^\diamond \times_{X^\diamond} V^\diamond) = \varinjlim_{j} \Hom_{U_{j, \et}}(U_j, U_j \times_{{X}} V),
    \end{align*}
and so the map $\tilde{U}^\diamond \to V^\diamond$ induces a splitting of $U_j \times_X V \to U_j$ for large enough $j$; let $s \colon U_j \to U_j \times_X V \to V$ be the composition of the splitting with projection to $V$. It follows from \cref{lem.composition-strongly-etale} that $X_K \to X$ is strongly \'etale at every point of $s(U_j)$. Replacing the map $\tilde{U}^\diamond \to \tilde{X}^\diamond_{\mc{G},b}$ by a $G_b(\QQ_p)$-translate, we can arrange for any point of $|X_K|$ to be in the image of $s$: indeed this follows since $|U_j| \to |X|$ is surjective and $G_b(\QQ_p)$ acts transitively on fibers of $\tilde{X}^\diamond_{\mc{G},b} \to X^\diamond$. We conclude that $X_K \to X$ is strongly \'etale. 

\end{proof}

\subsection{Variations of $p$-adic Hodge structure} Let $L$ be a $p$-adic field and let $X/\Spa\+ L$ be a smooth rigid analytic variety. In \cite[Definition 10.1]{howe-klevdal:ap-pahsII}, we defined a variation of $p$-adic Hodge structure on $X$. We recall this definition now, and define an equivalent full subcategory of $p$-adic Hodge structures on $X^\diamond$. Recall from \cite[\S3.3]{howe-klevdal:ap-pahsII} that for any de Rham local system $V$ on $X_\proet$, the results of \cite{scholze:p-adic-ht} produce a second $\mathbb{B}^+_\dR$ lattice $\mathbb{M}_0 \subseteq \mbb{L} \otimes_{\mbb{B}^+_\dR}\mbb{B}_\dR$ deforming the associated filtered vector bundle with integrable connection satisfying Griffiths transversality on $X_\et$\footnote{The lattice $\mathbb{M}_0$ (defined in the proof of \cite[Theorem 7.6-(ii)]{scholze:p-adic-ht}) is given by $(D_\dR(V) \otimes_\mc{O} \mc{O}\mbb{B}_\dR^+)^{\nabla = 0}$, where $D_\dR$ is the functor of \cite{liu-zhu:riemann-hilbert}.}. Recall furthermore from  \cite[\S3.3]{howe-klevdal:ap-pahsII} that the categories  of $\mbb{Q}_p$, $\mbb{B}^+_\dR$, and $\mbb{B}_\dR$-local systems on $X_\proet$ and $X^\diamond_v$ are naturally equivalent and thus it makes sense to discuss a de Rham local system on $X^\diamond$ and the associated lattice $\mathbb{M}_0$ also in this context. 

\begin{definition}\label{def.vpahs}
Let $L/\mathbb{Q}_p$ a $p$-adic field and let $X/L$ be a smooth rigid analytic variety.
\begin{enumerate}
\item A variation of $p$-adic Hodge structure on $X$ is a $\mbb{Q}$-graded de Rham $\mbb{Q}_p$-local system $V= \oplus_{\lambda} V_\lambda$ on $X_\proet$ such that, for any geometric point $x: \Spa(C,C^+) \rightarrow X$, $\oplus_{\lambda} (x^*V_{\lambda}, x^*\mbb{M}_0(V_\lambda))$
is a $p$-adic Hodge structure over $C$. We write $\VHS(X)$ for the category of variations of $p$-adic Hodge structure on $X$.
\item A variation of $p$-adic Hodge structure on $X^\diamond$ is a $p$-adic Hodge structure $\bigoplus (V_\lambda, \mc{L}_\lambda)$ on $X^\diamond$ such that, for each $\lambda \in \mbb{Q}$, $V_{\lambda}$ is a de Rham local system and $\mc{L}_{\lambda}=\mbb{M}_0(V_{\lambda})$. We write $\VHS(X^\diamond)$ for the category of variations of $p$-adic Hodge structure on $X^\diamond$. 
\end{enumerate}
\end{definition}

\cref{def.vpahs}-(1)/\cite[Definition 10.1]{howe-klevdal:ap-pahsII} was made using restriction to geometric points since in \cite{howe-klevdal:ap-pahsII} we defined only \emph{neutral} $p$-adic Hodge structures on $X^\diamond$. It is equivalent to require that $(V_\lambda, \mbb{M}_0(V_\lambda))$ be a $p$-adic Hodge structure, and this is essentially the content of the following lemma. 

\begin{lemma}
Let $L/\mathbb{Q}_p$ be a $p$-adic field and let $X/L$ be a smooth rigid analytic variety. The functor
\[ \oplus V_\lambda \mapsto  \oplus_{\lambda} (V_{\lambda}, \mbb{M}_0(V_\lambda)) \]
 is a tensor equivalence between $\VHS(X)$ and $\VHS(X^\diamond)$. 
\end{lemma}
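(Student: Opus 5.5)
The plan is to check three things in turn: that the functor is well defined, that it is fully faithful, and that it is essentially surjective. The key input is that being a $p$-adic Hodge structure on $X^\diamond$ is a pointwise condition. By definition, a $\mbb{Q}$-graded $\mbb{B}^+_\dR$-latticed $\mbb{Q}_p$-local system $\oplus_\lambda(V_\lambda,\mc{L}_\lambda)$ on $X^\diamond$ is a $p$-adic Hodge structure exactly when, at each geometric point, each modified bundle $(V_\lambda\otimes\mc{O}_\FF)_{\mc{L}_\lambda}$ is semistable of slope $-\lambda/2$. The structures involved are compatible with pullback: $\mbb{M}_0(V_\lambda)$ is a $\mbb{B}^+_\dR$-lattice in $V_\lambda\otimes\mbb{B}_\dR$, and its restriction along a geometric point $x\colon \Spa(C,C^+)\to X$ is $x^*\mbb{M}_0(V_\lambda)$. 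It follows that for $V\in\VHS(X)$ the object $\oplus_\lambda(V_\lambda,\mbb{M}_0(V_\lambda))$ satisfies the pointwise condition of a $p$-adic Hodge structure on $X^\diamond$. Here I use the equivalence, recalled from \cite[\S3.3]{howe-klevdal:ap-pahsII}, between $\mbb{Q}_p$- and $\mbb{B}^{(+)}_\dR$-local systems on $X_\proet$ and on $X^\diamond_v$, which transports $V_\lambda$ and $\mbb{M}_0(V_\lambda)$ to $X^\diamond$. The result lies in $\VHS(X^\diamond)$ by construction, so the functor is well defined. One should also note that every geometric point of $X^\diamond$ comes from a geometric point of $X$ in the sense of \cref{def.vpahs}-(1), since a map $\Spd(C,C^+)\to X^\diamond$ is the same as a map $\Spa(C,C^+)\to X$.

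For full faithfulness, a morphism in $\VHS(X)$ is a graded morphism $f\colon V\to V'$ of $\mbb{Q}_p$-local systems on $X_\proet$. Since the categories of $\mbb{Q}_p$-local systems on $X_\proet$ and on $X^\diamond_v$ are equivalent, it remains to show that $f\otimes\mbb{B}_\dR$ automatically carries $\mbb{M}_0(V_\lambda)$ into $\mbb{M}_0(V'_\lambda)$. This follows from the functoriality of $\mbb{M}_0=(D_\dR(-)\otimes\mc{O}\mbb{B}^+_\dR)^{\nabla=0}$. The functor $D_\dR$ of \cite{liu-zhu:riemann-hilbert} is functorial, and its $\mbb{B}_\dR$-extension is compatible with the comparison isomorphism $V\otimes\mbb{B}_\dR\cong D_\dR(V)\otimes\mc{O}\mbb{B}_\dR$ after taking horizontal sections. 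Hence morphisms on the two sides agree. Compatibility with tensor products likewise follows from the tensor compatibility of $D_\dR$, and hence of $\mbb{M}_0$.

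For essential surjectivity, an object of $\VHS(X^\diamond)$ is $\oplus(V_\lambda,\mc{L}_\lambda)$ with each $V_\lambda$ de Rham and $\mc{L}_\lambda=\mbb{M}_0(V_\lambda)$. Transporting $V$ back to $X_\proet$ gives a graded de Rham local system, and pulling the global $p$-adic Hodge structure back to geometric points shows that it satisfies the condition in \cref{def.vpahs}-(1). Its image under the functor is then the given object.

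I expect the main obstacle to be the bookkeeping in the comparison between $X_\proet$ and $X^\diamond_v$: checking that $\mbb{M}_0$, which is constructed on $X_\proet$ via $\mc{O}\mbb{B}^+_\dR$, corresponds to a $\mbb{B}^+_\dR$-lattice on $X^\diamond$ whose pullback to a geometric point is the lattice appearing in \cref{def.vpahs}-(1). I would handle this by citing \cite[\S3.3]{howe-klevdal:ap-pahsII} for the equivalence of $\mbb{B}^+_\dR$-local systems on the two sites, together with its compatibility with pullback to points.
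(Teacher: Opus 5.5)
Your proposal is correct and follows the same route as the paper. The paper's entire proof is the observation that being a $p$-adic Hodge structure can be checked at geometric points, so the pointwise condition of \cref{def.vpahs}-(1) agrees with the global condition on $X^\diamond$; your additional check that any graded morphism of de Rham local systems carries $\mbb{M}_0(V_\lambda)$ into $\mbb{M}_0(V'_\lambda)$ (by naturality of $D_\dR$ and of the de Rham comparison isomorphism), so that morphisms agree on both sides, just makes explicit a step the paper treats as immediate.
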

\begin{proof}
    This is immediate, since the property of a $\mbb{Q}$-graded $\mbb{B}^+_\dR$-latticed local system being a $p$-adic Hodge structure can be checked at geometric points.
\end{proof}

\begin{proposition}\label{prop.basic-pgr-to-vhs}
Let $L/\mathbb{Q}_p$ be a $p$-adic field and let $X/L$ be a smooth rigid analytic variety. The functor $\omega_\et$ from $\AdmPair^\pgr(X^\diamond)$ to $\ul{\mathbb{Q}_p}$-local systems on $X^\diamond$ factors through the subcategory of de Rham local systems, and there is a canonical isomorphism $\mbb{M}_0 \circ \omega_\et = \omega_{\mc{L}_\dR}$.

In particular, the equivalence $\omega_{\et}^{\mc{L}}: \AdmPair^{\mr{basic}}(X^\diamond) \rightarrow \HS(X^\diamond)$ of \cref{theorem.ap-hs-properties} restricts to a functor
\begin{equation}\label{eq.basic-pgr-to-vhs} \AdmPair^{\mr{basic},\pgr}(X^\diamond) \rightarrow \VHS(X^\diamond). \end{equation}
\end{proposition}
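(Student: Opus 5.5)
The statement is local on $X$, because being de Rham is local for the pro-\'etale (or \'etale) topology on $X$. Both $\mbb{M}_0$ and $\omega_{\mc{L}_\dR}$ are compatible with pullback along \'etale maps and descend uniquely, so we may pass to an \'etale cover. By \cref{defn:good-reduction-adm-pair} and \cref{defn.potentially-unramified}, after replacing $X$ by an \'etale cover we may therefore assume the following: $X$ is affinoid and connected, and there is an unramified $\tilde{X} = (X_i)_{i\in I}\in X_\proet$ with each $X_i \to X$ strongly \'etale, such that $A=(\mc{E},\mc{L}_\et)$ becomes neutral on $\tilde{X}^\diamond$. Concretely, $A|_{\tilde{X}^\diamond} \cong (\mc{E}(W), \mc{L}_\et)$ for an isocrystal $W$, at least on each connected component.

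The key step is to produce, on $\tilde{X}$, the de Rham comparison
\[ \omega_\et(A) \otimes_{\ul{\mbb{Q}_p}} \mbb{B}_\dR \cong \mc{E}(W)\boxtimes \mbb{B}_\dR = W\otimes_{\Qpbreve} \mbb{B}_\dR \]
and to show it is ``horizontal'': it should come from an isomorphism $\omega_\et(A)\otimes \mc{O}\mbb{B}_\dR \cong D\otimes_{\mc{O}_X}\mc{O}\mbb{B}_\dR$ for a vector bundle with integrable connection $D$ on $X$. The candidate for $D$ is obtained by descending $W\otimes_{\Qpbreve} \mc{O}$ with its trivial connection along the tower. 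We use that the transition maps are strongly \'etale, so the descent datum is given by the unramified (Frobenius-compatible) action of the automorphism group $G_b(\QQ_p)$ on $W$. Unramifiedness is exactly what makes the $\Qpbreve$-structure, or its completion $\widehat{L^{\mathrm{ur}}}$-structure, land inside $\mc{O}_{X_i}$-type functions, so $W\otimes \mc{O}$ descends to $X$. This is the relative analog of \cref{example.p-adic-field-gr-and-pgr}, where an unramified $\mf{G}_L$-action on $\mc{E}(W)$ comes from a $\varphi$-module over $L_0$. Concretely, I would use $\Qpbreve \hookrightarrow \mc{O}\mbb{B}^+_\dR|_{\tilde{X}}$ (via $W(\bar{\mbb{F}}_p)\to \mbb{A}_{\inf}$ and horizontal sections) to embed $W$ into $\omega_\et(A)\otimes\mc{O}\mbb{B}_\dR$ as horizontal sections. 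The $G_b(\QQ_p)$-equivariance then shows $D := (\omega_\et(A)\otimes \mc{O}\mbb{B}_\dR)^{\nabla=0}$-descended is a vector bundle of full rank, i.e.\ that $\omega_\et(A)$ is de Rham in the sense of \cite{liu-zhu:riemann-hilbert} (full rank of $D_\dR$ can be checked at one classical point by Liu--Zhu's rigidity).

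Granting this, $\mbb{M}_0(\omega_\et(A)) = (D\otimes \mc{O}\mbb{B}^+_\dR)^{\nabla=0}$. This coincides with $W\otimes_{\Qpbreve}\mbb{B}^+_\dR = \mc{E}\boxtimes\mbb{B}^+_\dR = \omega_{\mc{L}_\dR}(A)$ on $\tilde{X}^\diamond$, since horizontal sections of $W\otimes\mc{O}\mbb{B}^+_\dR$ with trivial connection are $W\otimes\mbb{B}^+_\dR$. This identification is independent of the choice of neutralization, since it is built from canonical maps, so it descends to a canonical isomorphism $\mbb{M}_0\circ\omega_\et\cong\omega_{\mc{L}_\dR}$ on $X^\diamond$, natural in $A$. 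The final claim is then formal. For a basic $A$ of potential good reduction, $\omega^{\mc{L}}_\et(A) = (\omega_\et(A),\omega_{\mc{L}_\dR}(A))$ is a $p$-adic Hodge structure by \cref{theorem.ap-hs-properties}-(3), its underlying local system is de Rham, and its lattice is $\mbb{M}_0$, so it lies in $\VHS(X^\diamond)$ by \cref{def.vpahs}-(2).

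The main obstacle is the middle step: making precise that an unramified pro-\'etale tower carries $\Qpbreve$, or at least $\widehat{L^{\mathrm{ur}}}$, inside the horizontal sections of $\mc{O}\mbb{B}_\dR$, and controlling the descent of $W\otimes\mc{O}$. There is an issue when the residue fields of points grow: strongly \'etale only means unramified residue extensions, so the maximal unramified constants can vary. I would first try to handle this by checking de Rhamness at classical points, where \cref{example.p-adic-field-gr-and-pgr} gives crystallinity, hence de Rhamness, of the fibers. I would then invoke Liu--Zhu's theorem that a local system on a connected smooth $X$ which is de Rham at one classical point is de Rham, using \cref{lemma.pur-density} to supply such points. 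The identification of lattices can then be checked after pullback to $\tilde{X}^\diamond$ and at geometric points, since two $\mbb{B}^+_\dR$-lattices coinciding at all geometric points coincide.
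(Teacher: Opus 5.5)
There is a genuine gap, and it is in the lattice comparison $\mbb{M}_0\circ\omega_\et=\omega_{\mc{L}_\dR}$. Your de Rham argument in the last paragraph is essentially the paper's: Liu--Zhu's pointwise criterion reduces you to one classical point. There, after a finite extension, the pair has good reduction and \cref{example.p-adic-field-gr-and-pgr} gives crystallinity. (You don't need \cref{lemma.pur-density} for this, since $X$ itself has classical points.)

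Your first route builds $D_\dR$ by descending $W\otimes_{\Qpbreve}\mc{O}$ along the tower, so that $D\otimes\mc{O}\mbb{B}^+_\dR=W\otimes\mc{O}\mbb{B}^+_\dR$ holds by construction. This is not established. $G_b(\mathbb{Q}_p)$ acts on $W$ through an infinite group, so nothing descends at a finite level $X_i$. The decompletion statement you would need is of the same nature as what the paper leaves conjectural (\cref{conj.pot-unram-prop}-(2), \cref{remark.sen-theory}). Embedding $W$ into $\omega_\et(A)\otimes\mc{O}\mbb{B}_\dR$ as horizontal sections is tautological, since it already lies in $\omega_\et(A)\otimes\mbb{B}_\dR$, and tells you nothing about $D_\dR$. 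Your fallback then says the lattices "can be checked at geometric points." But you only have a handle on $\mbb{M}_0$ at classical points, where the classical crystalline comparison identifies the fiber with $D_\dR(V_x)\otimes B^+_\dR=\mc{E}\boxtimes B^+_\dR$. At non-classical geometric points of $\tilde{X}^\diamond$ you have nothing to compare with.

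The missing step is the one the paper uses. First base change to $\breve{L}$, so that \cref{lemma.pur-density} applies. Then the rigid analytic points of your unramified neutralizing cover $\tilde{X}$ are dense in $|\tilde{X}|$; the paper uses the trivializing torsor $\tilde{X}_{A,b}$ instead, which is potentially unramified by \cref{prop.torsor-unramified}. On $\tilde{X}^\diamond$, both $\mbb{M}_0(\omega_\et(A))$ and $W\otimes_{\Qpbreve}\mbb{B}^+_\dR$ are $\mbb{B}^+_\dR$-lattices in the same $W\otimes_{\Qpbreve}\mbb{B}_\dR$, so they give two classifying maps $\tilde{X}^\diamond\to\Gr_{\GL_n}$, one of them constant. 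Since $\Gr_{\GL_n}$ is separated, the locus where these maps agree is closed. Agreement at the dense set of classical points therefore forces agreement everywhere. This is exactly where potential good reduction is used, as opposed to neutrality on an arbitrary pro-\'etale cover. It is also why \cref{lemma.pur-density} belongs in the lattice step rather than the de Rham step.
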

\begin{proof}
The ``in particular" is an immediate consequence of the first statement.

For the first statement, we first need to show that the restriction of $\omega_\et$ to $\AdmPair^{\pgr}$ factors through de Rham local systems. This follows by the pointwise criterion of Liu-Zhu \cite{liu-zhu:riemann-hilbert}: if $A=(\mc{E}, \mathcal{L}_\et) \in \AdmPair^{\pgr}(X^\diamond)$ then to check that $\omega_{\et}(A)$ is de Rham, we can first base change to $\breve{L}$, then, for any classical point, it suffices to check after a finite extension; if the extension is large enough, then this becomes just the usual crystalline comparison as in \cref{example.p-adic-field-gr-and-pgr}. To see that the $\mathcal{L}_\dR = \mathbb{M}_0 \circ \omega_\et$, it also suffices to check this at classical points where it is clear from the construction of the usual crystalline comparison: indeed, if we pass to a trivializing cover $\tilde{X}_{A,b}$ for $A$, which is potentially unramified by \cref{prop.torsor-unramified} then, since $\Gr_G/\Spd\+ \mathbb{Q}_p$ is separated and the classical points of $\tilde{X}_b$ are dense by \cref{lemma.pur-density}, it suffices to check that $\mathbb{M}_0 \circ \omega_\et=\omega_{\mathcal{L}_\dR}(A)=\mc{E}\boxtimes \mathbb{B}^+_\dR$ at classical points (the classifying map $\tilde{X}_{A,b} \rightarrow \Gr_{\GL_n}$ parameterizing the lattice $\mathbb{M}_0 \circ \omega_\et \subseteq \mc{E}\boxtimes \mathbb{B}_\dR=\mc{E}_b \boxtimes \mathbb{B}_\dR$  must be the trivial one since they agree on a dense set). 
\end{proof}

\begin{conjecture}\label{conj.pot-good-red}
    The functor \cref{eq.basic-pgr-to-vhs} is essentially surjective, and thus, by \cref{prop.basic-pgr-to-vhs} defines an equivalence $\AdmPair^{\mr{basic},\pgr}(X^\diamond) \xrightarrow{\sim} \VHS(X^\diamond)$. 
\end{conjecture}

\begin{remark}
    When $X=\Spa\, L$, \cref{conj.pot-good-red} holds, and is induced by the usual potentially crystalline equivalence. Indeed, \cite[Proposition 10.2.2, 10.4.3]{fargues-fontaine:courbes} as in \cref{example.p-adic-field-gr-and-pgr} combine to factor the usual potentially crystalline equivalence through an equivalence between filtered $\varphi$-modules over $L_0$ (the maximal absolutely unramified subfield of $L$) equipped with a semilinear action of $\mf{G}_L = \mr{Gal}(\ol{L}/L)$ whose restriction to inertia is finite and admissible pairs with potential good reduction. That variations of $p$-adic Hodge structure are potentially crystalline Galois representations follows from de Rham = potentially semistable and the fact that an isoclinic isocrystal cannot be equipped with a monodromy operator.  
\end{remark}

\begin{remark}\label{remark.sen-theory}
    By construction, a neutralizing torsors as in \cref{prop.torsor-unramified} for the basic $G$-admissible pair associated to a $G$-variation of $p$-adic Hodge structure is a de Rham torsor whose associated filtered torsor with integrable connection has trivial filtration. It follows (e.g. from \cite[Theorem A]{Howe.GeometricSenAndKodairaSpencer}, though this is overkill) that the geometric Sen morphism of this torsor is identically zero. This strongly suggests that the torsor is potentially unramified, as would be implied by \cref{conj.pot-good-red} in light of \cref{prop.torsor-unramified}. Indeed, after extending scalars to a complete algebraically closed extension of $\mathbb{Q}_p$, it could be reasonable to conjecture that vanishing of geometric Sen is equivalent to being potentially unramified, just as vanishing of arithmetic Sen is equivalent to  potentially unramifed for Galois representations. 
\end{remark}

\subsection{The Hodge-Tate period map and transcendence}\label{ss.ht-period-transcendence}
We now make a conjecture generalizing one of the transcendence results of \cite{howe-klevdal:ap-pahsII}. 

\begin{conjecture}\label{conj.transcendence} Let $L$ be a strict $p$-adic field, let $C={\overline{L}}^\wedge$, let $X/L$ be a smooth rigid analytic variety, and let $\tilde{X}/X$ be a potentially unramified object in $X_\proet$ with $\tilde{X}^\diamond$ connected. Fix a point $x \in X(C)$. 

For $A$ a neutral admissible pair on $\tilde{X}^\diamond$ with motivic Galois group $G=\MG(A, x)$, and for $\mc{G}$ the associated $G$-admissible pair on $\tilde{X}^\diamond$:
\begin{enumerate}
    \item For some finite extension $L'/L$ and any connected component $Y$ of $\tilde{X}_{L'}^\diamond$, $\mc{G}|_{Y}$ has type $[\mu]$ for some conjugacy class of cocharacters $[\mu]$. 
    \item For $Y$ as in (1), $\pi_\HT: \tilde{Y}_{\mc{G},\et} \rightarrow \Fl_{[\mu]}^\diamond$ the associated Hodge-Tate period map, and $\tilde{Y}^\circ$ any connected component of $\tilde{Y}_{\mc{G}, \et, C}$, $\pi_\HT(\tilde{Y}^\circ)$ is rigid analytic Zariski dense in any connected open $U \subseteq \Fl_{[\mu]}^\diamond$ that it intersects.
\end{enumerate}

\end{conjecture}

The case of \cref{conj.transcendence} where $\tilde{X}=X$ is \cite[Theorem B]{howe-klevdal:ap-pahsII}. More, generally, if $X$ is connected and $A \in \AdmPair^\pgr(X)$, then $X=X_{A, [b]}$ for some $b$, and we can restrict $A$ to $\tilde{X}^\diamond$ for a connected component $\tilde{X}$ of $\tilde{X}_{A,b}$ to obtain an object where \cref{conj.transcendence} applies. In particular, if \cref{conj.pot-good-red} holds, this can be applied to any variation of $p$-adic Hodge structure.

\subsubsection{}

The proof of \cite[Theorem B]{howe-klevdal:ap-pahsII}  proceeds by establishing the existence of a Hodge generic classical point. This existence depends on properties of the Hodge period map. We now discuss which aspects of the proof go through still in the more general context of \cref{conj.transcendence} and how this relates to \cref{conj.pot-unram-prop} on the geometric and differential properties of potentially unramified objects. 

Suppose we are in the setting of \cref{conj.transcendence}, and that (1) holds. We then have an \'{e}tale lattice period map $\pi_{\mc{L}_\et}: Y \rightarrow \Gr_{[\mu]}$ and the associated Hodge period map $\pi_{\Hdg}: Y \rightarrow \Fl_{[\mu^{-1}]}.$  Using \cref{lemma.pur-density}, we find that one of the important steps in the proof of \cite[Theorem B]{howe-klevdal:ap-pahsII} still holds in this setting: as in \cite[Lemma 7.16]{howe-klevdal:ap-pahsII}, the Hodge loci, which, by \cite[Lemma 7.10]{howe-klevdal:ap-pahsII} are closed subdiamonds for any neutral admissible pair on any diamond, are in this case pulled back along $\pi_\Hdg$ from Zariski closed subsets of $\Fl_{[\mu^{-1}]}$ (as in loc. cit., one argues the natural Hodge tensor conditions formulated using the lattice and filtration are equivalent at rigid analytic points, then extends by the density of rigid analytic points in $Y^\diamond$). 

The key missing ingredient in the proof of the existence of a Hodge-generic classical point is then the analog of weakly Shilov points and \cite[Lemma 2.1.4]{bhatt-hansen:six-functors-Zariski-constructible}, which allowed us in the rigid analytic setting to show the Hodge generic locus contained many rank one points. If \cref{conj.pot-unram-prop} holds, then we can replace $\pi_\Hdg$ with an actual map of adic spaces $\pi_{\Hdg}: \hat{Y} \rightarrow \Fl_{[\mu^{-1}]}$, and in the spirit of \cref{conj.pot-unram-prop}, we expect that analogous properties should hold also for Zariski closed subsets of $\hat{Y}$.

\subsection{Differential aspects of the Hodge period map}\label{ss.inscribed-period-maps}
In the basic case, we proved in \cite[Theorem 9.4]{howe-klevdal:ap-pahsII} a more refined transcendence result than \cite[Theorem B]{howe-klevdal:ap-pahsII}. For the non-minuscule case of \cite[Theorem 9.4]{howe-klevdal:ap-pahsII}, the characterization of maps from rigid analytic varieties to the $\mbb{B}^+_\dR$-affine Grassmannian of \cite[Theorem 5.4-(4)]{howe-klevdal:ap-pahsII} also plays an important role. In the more general setting of an admissible pair with good reduction, it is thus natural to study differential properties of the Hodge and \'{e}tale lattice period map on $\tilde{X}_b^\diamond$. 

A good differential theory is not typically accessible directly at the level of diamonds. However, if $\mc{G}$ has potential good reduction, then $\tilde{X}_b/X$ is potentially unramified, and \cref{conj.pot-unram-prop} would imply that there is an associated adic space $\hat{\tilde{X}}_b$ and that we can view the Hodge period map as a map of adic spaces 
\[ \pi_{\Hdg}: \hat{\tilde{X}}_b \rightarrow \Fl_{[\mu^{-1}]}. \]
Furthermore, \cref{conj.pot-unram-prop} would imply that, viewed as a map of adic spaces, $d\pi_\Hdg$ is naturally identified with the pullback to $\hat{\tilde{X}}_b$ of the Kodaira-Spencer map for the associated filtered $G$-bundle with connection on $X$.

\subsubsection{}The theory of \cite{Howe.InscriptionTwistorsAndPAdicPeriods} gives an alternative notion of differential structures for which this kind of statement can be made precise unconditionally. Explicitly, associated to $X$ there is an inscribed $v$-sheaf $X^\lfid$ as in \cite[\S4.4]{Howe.InscriptionTwistorsAndPAdicPeriods} with a Hodge period map $\pi_{\Hdg}: X^\lfid \rightarrow \Fl_{[\mu^{-1}]}^\lfid / G(\overline{\mc{O}})$ whose derivative is the Kodaira--Spencer map (obtained by applying \cite[\S 7.1.3-7.1.4]{Howe.InscriptionTwistorsAndPAdicPeriods} to the $G$-bundle with integrable connection $D_\dR \circ \mc{G}$ for $D_\dR$ the relative de Rham functor of \cite[Theorem 1.5]{liu-zhu:riemann-hilbert}). As in \cite[\S 7.2]{Howe.InscriptionTwistorsAndPAdicPeriods}, there is a lattice lift 
\[ \pi^+_{\Hdg}: X^\lfid \rightarrow \Gr_{[\mu]} /G(\overline{\mathbb{B}^+_\dR}) \]
where $\Gr_{[\mu]}$ is a natural inscribed upgrade of the $\mathbb{ B}^+_\dR$-affine Grassmannian; the derivative of $\pi^+_\Hdg$ is the canonical lift of the Kodaira-Spencer morphism (it is also the canonical twisted Higgs field / geometric Sen morphism \cite[\S1.2.6]{Howe.GeometricSenAndKodairaSpencer}). 

The pullback to  $X^\lfid$ along $\pi_{\Hdg}^+$ of the canonical $G(\overline{\mathbb{B}^+_\dR})$-torsor is the (flat) trivializing torsor of $\omega_{\mathcal{L}_\dR} \circ \mc{G}$. In particular, letting $\hat{\tilde{X}}_b^\lfid$ be the pullback of $\tilde{X}_b^\diamond / X^\diamond$ along the natural map $X^\lfid \rightarrow X^\diamond$, it follows from the constructions that $\hat{\tilde{X}}_b^\lfid$ is a reduction of structure group to $G_b(\mathbb{Q}_p^\diamond)$ for the trivializing torsor of $\omega_{\mathcal{L}_\dR} \circ \mc{G}$ and thus we obtain a natural period map 
\[ \pi_{\Hdg}^+: \hat{\tilde{X}}_b^\lfid \rightarrow \Gr_{[\mu]} \]
and, by composition with the Bialynicki-Birula map
\[ \pi_{\Hdg}: \hat{\tilde{X}}_b^\lfid \rightarrow \Fl^{\lfid}_{[\mu^{-1}]}. \]
Writing $q: \hat{\tilde{X}}_b^\lfid \rightarrow X^\lfid$, $dq$ gives a canonical identification $q^*  T_X=T_{\hat{\tilde{X}}_b^\lfid}$, and under this identification $d\pi_\Hdg$ is the Kodaira-Spencer map. 

We view this inscribed version of the construction as some further evidence that \cref{conj.pot-unram-prop} is reasonable and that the geometric transcendence results of \cite{howe-klevdal:ap-pahsII} may extend to more general admissible pairs of potential good reduction (and, thus, via \cref{conj.pot-good-red}, to variations of $p$-adic Hodge structure).  

\begin{remark}
    The key missing ingredient in the inscribed approach is that we do not know whether functions on $\hat{\tilde{X}}_b$ induce canonical maps $\hat{\tilde{X}}_b^\lfid \rightarrow \mathbb{A}^{1,\lfid}$, thus we do not yet know that such functions can be differentiated as conjectured in \cref{conj.pot-unram-prop}. On the other hand, one advantage of the inscribed approach is that we also obtain a differentiable version of $\pi_{\Hdg}^+$ even though, when $[\mu]$ is not miniscule, there does not exist a rigid analytic version of $\Gr_{[\mu]}$. 
\end{remark}

\begin{remark}
    If we treat $\tilde{X}_b/X$ as a $G_b(\mathbb{Q}_p)$-torsor, then the construction of \cite[\S11]{Howe.InscriptionTwistorsAndPAdicPeriods} produces an inscribed $v$-sheaf $\tilde{X}_b^\lfid$ that is a $G(\mathbb{Q}_p^\lfid)$ quasi-torsor over $X^\lfid$. In terms of our above construction, it can be described as the push-out
    \[ \tilde{X}_b^\lfid =  \hat{\tilde{X}}_b^\lfid \times^{G_b(\mathbb{Q}_p^\diamond)} G_b(\mathbb{Q}_p^\lfid).\]
    In particular, the tangent bundle of $\tilde{X}_b^\lfid$ is the direct sum of $T_X$ and the constant bundle $\Lie\+ G_b(\mathbb{Q}_p) \otimes_{\mbb{Q}_p} \mathbb{Q}_p^\lfid$ in the vertical direction. We emphasize that the existence of such a reduction is atypical for $p$-adic Lie group torsors and the construction of \cite[\S 11]{Howe.InscriptionTwistorsAndPAdicPeriods}; the existence here reflects the fact that $\tilde{X}_b$ is potentially unramified. In particular, the splitting of the tangent bundle of $\tilde{X}_b^\lfid$ can be deduced from the fact that the Kodaira-Spencer / geometric Sen morphism for $\tilde{X}_b/X$ (which is \emph{not} the same as the Kodaira-Spencer morphism / geometric Sen morphism for $\tilde{X}/X$) is identically zero (cf. \cref{remark.sen-theory}).   
\end{remark}

\section{Special points}\label{s.special-points}
In this section, we study special points of the moduli spaces $\Gr_{[\mu]}^{\badm}$ and $\mc{M}_{b, [\mu]}$. In analogy with special points on global Shimura varieties, we define the special points to be the locus of points at which the universal admissible pair has motivic Galois group a torus. For global Shimura varieties special points are known to satisfy some remarkable properties, most notably the \emph{Andr\'{e}-Oort conjecture} (proven by \cite{pila-shankar-tsimerman:andre-oort}) that the Zariski closure of a set of special points is a special subvariety. A more classical result is that special points are dense in the analytic topology. 

We investigate the extent to which $p$-adic analogues of these results hold for $\Gr_{[\mu]}^\badm$. Though some of our results hold for general $[\mu]$ (e.g.\ the existence of special points when $b$ is basic in \cref{lemma.existence-special-point}), we primarily restrict to $[\mu]$ minuscule in order to avoid subtleties in talking about a `rigid analytic Zariski topology' of $\Gr_{[\mu]}^\badm$ when $[\mu]$ is not minuscule (cf. \cite[Remark 9.2]{howe-klevdal:ap-pahsII}). When $[\mu]$ is minuscule, $\Gr_{[\mu]}^\badm = (\Fl_{[\mu^{-1}]}^\badm)^\diamond$, and we show that special points are rigid Zariski dense in $\Fl_{[\mu^{-1}]}^\badm$ if and only if $b$ is basic (\cref{theorem.special-points-dense}). Finally, in \cref{ss.p-adic-andre-oort} we construct a simple example to show that the na\"ive analogue of the Andr\'e-Oort conjecture fails in this setting (\cref{example.andre-oort-counterexample}).

\subsection{The structure of the special points}\label{ss.special-points-structure}
Let $G/\mathbb{Q}_p$ be a connected reductive group and let $[\mu]$ be a conjugacy class of cocharacters of $G_{\overline{\mbb{Q}}_p}$ and $b$ a representative of a class $[b] \in B(G, [\mu^{-1}])$. We first recall the definition of special points of $\Gr_{[\mu]}^{b-\adm}$ and $\mc{M}_{b, [\mu]}$. 

\begin{definition}[Special points]\label{defn.special-points}
For $C/\breve{\mbb{Q}}_p([\mu])$ an algebraically closed non-archimedean extension, a \emph{special point} of $\Gr_{[\mu]}^{b-\adm}(C)$ (respectively $\mc{M}_{b,[\mu]}(C)$) is a point $x$ such that the motivic Galois group of the restriction of the universal $G$-admissible pair to $\Spd\+ C$ is a torus.
\end{definition}

Equivalently, the special points are the $C$-points in the image of a functoriality map as in \cite[\S 7.5]{howe-klevdal:ap-pahsII}
\[ \Gr_{\mu_T}^{b_T-\adm} \rightarrow \Gr_{[\mu], \Qpbreve(\mu_T)}^{b-\adm} \qquad \left(\text{respectively } \mc{M}_{b_T, \mu_T} \to \mc{M}_{b, [\mu], \Qpbreve(\mu_T)}\right) \]
for $T \leq G$ a torus, $\mu_T$ a cocharacter of $T_{\overline{\mbb{Q}}_p}$ lying in $[\mu]$, $b_T$ a representative of the unique class in $B(T,\mu_T^{-1})$ (by \cite[Proposition 7.10]{howe-klevdal:ap-pahsII}, requiring this compatibility is equivalent to the space on the left being non-empty), and a choice of $g \in G(\Qpbreve)$ such that $gb_T\sigma(g^{-1})=b$. Note that, in this case, $\Gr_{\mu_T}^{b_T-\adm}=\Gr_{\mu_T, \Qpbreve}=\Spd\+ \Qpbreve(\mu_T)$, so that the associated special points are defined already over $\Qpbreve(\mu_T)$.

\begin{proposition}\label{prop.special-points-loc}
The locus of special points in $\Gr_{[\mu]}^{b-\adm}$ is a finite union of $G_b(\mathbb{Q}_p)$-orbits, each defined over a finite extension of $\Qpbreve([\mu])$.  
\end{proposition}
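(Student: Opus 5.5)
We use the description of special points given just after \cref{defn.special-points}. Every special point is the image of the unique point $\Spd\+\Qpbreve(\mu_T)=\Gr_{\mu_T}^{b_T-\adm}$ under a functoriality map determined by a datum $(T,\mu_T,b_T,g)$ with $gb_T\sigma(g)^{-1}=b$. Concretely, the point is the lattice $g\cdot x_{\mu_T}$, where $x_{\mu_T}\in\Gr_{[\mu]}$ is the point attached to $\mu_T$; this lattice is moved into the $b$-frame via $g$. The plan is to show that these data fall into finitely many equivalence classes, with equivalent data giving $G_b(\Qp)$-translates of one another.

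First, fix $(T,\mu_T)$ and vary $b_T$ and $g$. Replacing $b_T$ by a $\sigma$-conjugate $t b_T \sigma(t)^{-1}$ with $t\in T(\Qpbreve)$ changes $g$ to $gt^{-1}$. Since $t$ commutes with $\mu_T$, the point $x_{\mu_T}$ is fixed by $t$, so the image point does not change. For fixed $b_T$, any two choices $g,g'$ with $gb_T\sigma(g)^{-1}=g'b_T\sigma(g')^{-1}=b$ satisfy $g'g^{-1}\in G_b(\Qp)$. So the image point moves by an element of $G_b(\Qp)$, and each pair $(T,\mu_T)$ contributes at most one $G_b(\Qp)$-orbit, defined over $\Qpbreve(\mu_T)$.

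Second, vary $(T,\mu_T)$. Since $T$ is a torus, $B(T,\mu_T^{-1})$ has a single element, so $[b_T]$ is determined by $\mu_T$. It therefore suffices to show that only finitely many pairs occur up to $G(\Qp)$-conjugacy, or more precisely up to the equivalence coming from $G(\Qpbreve)$-conjugations compatible with the $b$-frame. Conjugating the whole datum by $h\in G(\Qp)$ replaces $(T,\mu_T,b_T,g)$ by $(hTh^{-1},h\mu_Th^{-1},hb_Th^{-1},gh^{-1})$ and does not change the image point. One must also check that $\sigma(h)=h$ is what keeps the $\sigma$-conjugacy relation intact.

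The image point depends only on $\mu_T$, which is a cocharacter in $[\mu]$, together with the torus through the condition on $b_T$. Replacing $T$ by the centralizer torus of $\mu_T$ in a maximal torus, we may take $T$ to be a maximal torus of $G$. There are finitely many $G(\Qp)$-conjugacy classes of maximal tori over a $p$-adic field, by finiteness of Galois cohomology $H^1(\Qp,N(T))$. For each maximal torus $T$, there are finitely many $\mu_T\in[\mu]$ factoring through $T_{\ol{\Qp}}$, namely a Weyl orbit. Hence there are finitely many orbits overall. Each orbit is defined over $\Qpbreve(\mu_T)$, which is a finite extension of $\Qpbreve([\mu])$ because $\mu_T$ is defined over a finite splitting field of $T$.

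The main obstacle is the reduction to maximal tori. A special point has torus motivic Galois group $T'$, and $T'$ might not be maximal. We need to replace $T'$ by a maximal torus $T\supseteq T'$ such that $T$ is defined over $\Qp$, $b$ still comes from $T$, and $b_T$ is compatible. To handle this, we first try taking $T$ to be a maximal torus of the centralizer $Z_G(T')$, which is a connected reductive $\Qp$-group. We then need to show that $b_{T'}$ can be $\sigma$-conjugated into $T$ inside $Z_G(T')$, using the fact that every class in $B(Z_G(T'))$ comes from an elliptic maximal torus (Kottwitz). The image point is the same, since it only sees $\mu_{T'}$.
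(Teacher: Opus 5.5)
Your proposal is correct and follows the paper's proof: reduce to maximal tori, use finiteness of $G(\mathbb{Q}_p)$-conjugacy classes of maximal tori and of the $\mu_T\in[\mu]$ through each torus, note that the choice of $b_T$ within its class is irrelevant while the freedom in $g$ is exactly $G_b(\mathbb{Q}_p)$, and read off the field of definition $\breve{\mathbb{Q}}_p(\mu_T)$.

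The one thing to change is the step you flag as the main obstacle, because it is in fact immediate. Since $T'$ is central in the connected reductive group $Z_G(T')$, any maximal $\mathbb{Q}_p$-torus $T$ of $Z_G(T')$ contains $T'$ and is maximal in $G$. So $b_{T'}\in T'(\breve{\mathbb{Q}}_p)\subseteq T(\breve{\mathbb{Q}}_p)$ with no $\sigma$-conjugation needed. Its class in $B(T)$ is the image of the class in $B(T',\mu_{T'}^{-1})$, so by functoriality of the Kottwitz map for tori it is the unique element of $B(T,\mu_{T'}^{-1})$. The same $g$ then gives the same point.

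You should drop the Kottwitz detour. If you $\sigma$-conjugate $b_{T'}$ into an elliptic maximal torus of $Z_G(T')$, the resulting element of $T(\breve{\mathbb{Q}}_p)$ need not lie in $B(T,\mu_{T'}^{-1})$, because $B(T)\to B(Z_G(T'))$ need not be injective on basic classes. For example, $B(T)\cong\mathbb{Z}/2$ for an unramified anisotropic torus $T\subseteq\mathrm{SL}_2$, while $\mathrm{SL}_2$ has a single basic class. Your first step relies precisely on $[b_T]$ being the unique element of $B(T,\mu_T^{-1})$, so this route would break it.
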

\begin{proof}
    We first note that, in the characterization of special points via functoriality maps, we may always enlarge our torus $T$ to a maximal torus. Two maximal tori that are conjugate under $G(\mathbb{Q}_p)$ give rise to the same associated special points (for $\mc{G}$ the $G$-admissible pair associated to a special point, the different choices correspond to fixing different trivializations of the fiber functor $\omega_\et \circ \mc{G}$ to identify the motivic Galois group with a subgroup of $G$), thus it suffices to consider one $T$ in each $G(\mathbb{Q}_p)$-conjugacy class of maximal tori. 
    
    By \cite[Corollary 1 on p.147]{Serre.GaloisCohomology}, there are only finitely many such conjugacy classes of maximal tori in $G$. For a fixed maximal torus $T$, there are only finitely many choices of $\mu_T$ living in $[\mu]$. For a given $(T,\mu_T)$, the different choices of $b_T$ give rise to the same set of special points, and for a fixed choice the collection of all associated special points form an orbit for $G_b(\mathbb{Q}_p)$ (via the choice of $g$ that $\sigma$-conjugates $b_T$ to $b$). Since any point in this orbit is defined over $\Qpbreve(\mu_T)$ as above, we conclude. 

\end{proof}

\begin{corollary}The locus of special points in $\mc{M}_{b,[\mu]}$ is a finite union of $G_b(\mbb{Q}_p) \times G(\mbb{Q}_p)$-orbits. Moreover, the points in each orbit are defined over the perfectoid field $E_\infty$ for $E/\mbb{Q}_p([\mu])$ a finite extension and $E_\infty$ the completion of its maximal abelian extension. 
\end{corollary}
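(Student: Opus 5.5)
The plan is to deduce the corollary from \cref{prop.special-points-loc}, using the projection $\pi_{\mc{L}_\et} \colon \mc{M}_{b,[\mu]} \to \Gr^{b-\adm}_{[\mu]}$. This map is a $\ul{G(\mbb{Q}_p)}$-torsor: the fiber over a point is the set of trivializations $\rho_\et$ of the \'etale realization. It is also $G_b(\mbb{Q}_p)$-equivariant, and the $G_b(\mbb{Q}_p)$-action on $\mc{M}_{b,[\mu]}$ commutes with the $G(\mbb{Q}_p)$-action.

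\textbf{Step 1: special points are preimages of special points.} Being special only depends on the motivic Galois group of the universal $G$-admissible pair at the point, and that admissible pair is pulled back along $\pi_{\mc{L}_\et}$. So the special locus in $\mc{M}_{b,[\mu]}$ is exactly the preimage of the special locus in $\Gr^{b-\adm}_{[\mu]}$. By \cref{prop.special-points-loc}, the latter is a finite union of $G_b(\mbb{Q}_p)$-orbits. Since $G(\mbb{Q}_p)$ acts transitively on geometric fibers of $\pi_{\mc{L}_\et}$, the preimage of a single $G_b(\mbb{Q}_p)$-orbit is a single $G_b(\mbb{Q}_p)\times G(\mbb{Q}_p)$-orbit. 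This gives the first claim.

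\textbf{Step 2: field of definition.} Alternatively, by the description via functoriality, each orbit contains the image of $\mc{M}_{b_T,\mu_T} \to \mc{M}_{b,[\mu]}$ for a maximal torus $T$. It therefore suffices to show that $\mc{M}_{b_T,\mu_T}$ has all its $C$-points defined over $E_\infty$, where $E$ is a finite extension of $\mbb{Q}_p([\mu])$ containing $\mbb{Q}_p(\mu_T)$. We take $E$ to be the compositum over the finitely many pairs $(T,\mu_T)$ from the proof of \cref{prop.special-points-loc}.

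The argument for the torus runs as follows. Over $\Spd\+ \Qpbreve(\mu_T)$, the space $\mc{M}_{b_T,\mu_T}$ is a $\ul{T(\mbb{Q}_p)}$-torsor, namely the torsor of \'etale trivializations of a $T$-admissible pair $\mc{T}$ over the point $\Spd\+\Qpbreve(\mu_T)$, or over $\Spd\+ E'$ for a suitable finite $E'$ after descent. Its points over $C$ are defined over the fixed field of the kernel of the Galois action on $\omega_\et\circ\mc{T}$. That action is a continuous homomorphism $\Gal(\overline{E'}/E') \to T(\mbb{Q}_p)$, which is abelian, so the trivializing torsor splits over the completion of the maximal abelian extension of $E'$.

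\textbf{Main obstacle.} The delicate point is descending the special point from $\Qpbreve(\mu_T)$ to a finite extension of $\mbb{Q}_p([\mu])$ rather than of $\Qpbreve$. I would handle this by using the Lubin--Tate/crystalline description (\cref{example.p-adic-field-gr-and-pgr}): a CM $T$-admissible pair of type $\mu_T$ is associated to a crystalline character $\mathrm{Gal}(\overline{E}/E)\to T(\mbb{Q}_p)$ for $E\supseteq\mbb{Q}_p(\mu_T)$. This character can be taken to be the composite of the reciprocity character of $E$ with the norm of $\mu_T$, twisted by an unramified character to match $b_T$, which becomes trivial after the unramified base change implicit in $E_\infty$.

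The resulting Galois representation factors through $\mathrm{Gal}(E^{\ab}/E)$. Its trivializing torsor is then defined over $\widehat{E^{\ab}}=E_\infty$, since $\widehat{E^{\ab}}\supseteq \Qpbreve E$. All points in the orbit are translates of one such point under $G_b(\mbb{Q}_p)\times G(\mbb{Q}_p)$, and these translates do not change the field of definition, so every point in each orbit is defined over $E_\infty$.
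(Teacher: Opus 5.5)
Your Step 1 is the paper's argument: the special locus in $\mc{M}_{b,[\mu]}$ is the preimage of the special locus under the $G(\mathbb{Q}_p)$-torsor $\pi_{\mc{L}_\et}$. For the field of definition, your reduction to a torus also matches the paper, but you carry out the descent differently, and the argument works.

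The paper never writes down a Galois character. It picks a decent representative $b_T\in T(\mathbb{Q}_{p^n})$, so that $\mc{E}_{b_T}$, and hence the whole moduli problem, is already defined over $\mathbb{Q}_{p^n}(\mu_T)$. There $\Gr^{b_T-\adm}_{\mu_T}$ is still the single point $\Spd\,\mathbb{Q}_{p^n}(\mu_T)$. So $\mc{M}_{b_T,\mu_T}$ is a $\ul{T(\mathbb{Q}_p)}$-torsor over $\Spd$ of a finite extension of $\mathbb{Q}_p([\mu])$, and the abelianness of $T(\mathbb{Q}_p)$ finishes.

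You instead leave the moduli space over $\breve{\mathbb{Q}}_p(\mu_T)$ and build a model of its unique point over a finite $E$. This model is the admissible pair attached, via \cref{example.p-adic-field-gr-and-pgr}, to a crystalline character such as $N_{E/\mathbb{Q}_p}\circ\mathrm{Res}_{E/\mathbb{Q}_p}(\mu_T)\circ\chi_\pi$. Here $\chi_\pi\colon \mathrm{Gal}(\overline{E}/E)\to\mathcal{O}_E^\times$ is the Lubin--Tate character of a uniformizer $\pi$; the ``reciprocity character'' only becomes a character of $\mathrm{Gal}(\overline{E}/E)$ after such a choice.

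To make your argument airtight, state one step explicitly. Because $\Gr^{b_T-\adm}_{\mu_T}$ is a single point, your pair, once $b_T$-rigidified over $\breve{E}$ by some $g\in T(\breve{\mathbb{Q}}_p)$, is the pullback of the universal pair. Hence $\mc{M}_{b_T,\mu_T}$ is the restriction to $\Spd\,\breve{E}$ of the torsor attached to your character. The unramified twist is unnecessary: $B(T,\mu_T^{-1})$ is a singleton, and the exact representative is absorbed by $g$.

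The trade-off between the two routes:
\begin{itemize}
\item The paper's argument needs only the existence of decent representatives and is shorter.
\item Yours needs the extra input that crystalline $T(\mathbb{Q}_p)$-valued characters with prescribed Hodge--Tate cocharacter exist. In return it identifies the Galois character explicitly, as a local CM reciprocity law. That pins down the precise extension of $\breve{E}$ over which the special points are defined, rather than only showing it lies in $E_\infty$.
\end{itemize}
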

\begin{proof}
    The first claim is immediate since $\pi_{\mc{L}_\et}:\mc{M}_{b,[\mu]} \rightarrow \Gr_{[\mu]}^{b-\adm}$ is a $G(\mbb{Q}_p)$-torsor and the special points in $\mc{M}_{b,[\mu]}$ are precisely the pre-images of special points in $\Gr_{[\mu]}^{b-\adm}$. To see the claim about the field of definition, it suffices to treat the case where $G=T$ is a torus. In this case, by picking $b$ to lie in $T(\mbb{Q}_{p^n})$, we can define $\Gr_{[\mu]}^{b-\adm}$ over $\mbb{Q}_{p^n}(\mu)$, where it is isomorphic to $\Spd(\mbb{Q}_{p^n}(\mu))$. Then, since the structure group $T(\mbb{Q}_p)$ is abelian, the points of $\mc{M}_{b,[\mu]}$ are defined over (the completion of) the maximal abelian extension of $\mbb{Q}_{p^n}(\mu)$. 
\end{proof}

\begin{remark}
    The stabilizer in $G_b(\mbb{Q}_p)$ of a point $x \in \Gr_{[\mu]}^{b-\adm}(C)$ is naturally identified with the automorphism group $A$ of the associated $G$-admissible pair $\mc{G}$. Thus the orbit of $x$ is naturally identified with the locally profinite set $G_b(\mbb{Q}_p)/A$. Moreover, if we fix a trivialization of $\omega_\et \circ \mc{G}$, i.e. a point $x_\infty \in \mc{M}_{b, [\mu]}$ lying over $x\in\Gr_{[\mu]}^{b-\adm}(C)$, then the automorphism group $A$ is identified with the $\mathbb{Q}_p$-points of the centralizer of the motivic Galois group of $\mc{G}$, viewed as a subgroup of $G$ via this choice of trivialization. Thus the orbit of $x_\infty$ is identified with the locally profinite set $(G_b(\mbb{Q}_p) \times G(\mbb{Q}_p)) / A$, where $A$ acts by $(g_b,g)\cdot a= (g_b a, a^{-1}g)$. In particular, for $K \leq G(\mbb{Q}_p)$ a compact open subgroup, the corresponding special points in $\mc{M}_{b,[\mu]}/K$ are given by
    \[ (G_b(\mbb{Q}_p) \times G(\mbb{Q}_p)/K) / A. \]
\end{remark}

Note that it is not clear that special points exist at all. The following lemma, which is essentially \cite[Proposition 1.25]{rapoport-zink:period-spaces}, establishes this existence in the basic case. We give a proof for completeness.  

\begin{lemma}\label{lemma.existence-special-point}
    Let $G/\mathbb{Q}_p$ be a connected reductive group, let $[\mu]$ be a conjugacy class of cocharacters of $G_{\overline{\mbb{Q}}_p}$, and let $b \in G(\Qpbreve)$ represent the unique basic class $[b]\in B(G,[\mu^{-1}])$. Then, $\Gr_{[\mu]}^{b-\adm}$ contains a special point. 
\end{lemma}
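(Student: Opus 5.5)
Using the functoriality characterization of special points after \cref{defn.special-points}, it suffices to produce a maximal torus $T \leq G$ over $\mathbb{Q}_p$ and a cocharacter $\mu_T \in [\mu]$ of $T_{\overline{\mathbb{Q}}_p}$ such that, for $b_T$ representing the unique class of $B(T,\mu_T^{-1})$, the image of $[b_T]$ in $B(G)$ equals $[b]$. Then there is $g \in G(\Qpbreve)$ with $g b_T \sigma(g^{-1}) = b$, and the functoriality map $\Spd\,\Qpbreve(\mu_T) = \Gr_{\mu_T}^{b_T-\adm} \to \Gr_{[\mu]}^{b-\adm}$ gives a point whose motivic Galois group lies in $T$, hence is a torus.

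I would take $T$ to be an \emph{elliptic} maximal torus, i.e.\ one with $T/Z_G$ anisotropic over $\mathbb{Q}_p$; such tori exist for any reductive group over a $p$-adic field (e.g.\ by Kneser/Kottwitz, taking a maximal torus of an anisotropic inner form and transferring, or directly via the unramified elliptic tori coming from Coxeter elements in a quasi-split inner form). Pick any $\mu_T \in [\mu]$ factoring through $T_{\overline{\mathbb{Q}}_p}$, which is possible since all maximal tori are conjugate over $\overline{\mathbb{Q}}_p$. Since $B(T) \cong X_*(T)_{\Gamma}$ via the Kottwitz map, $b_T$ is the class of $-\mu_T$ in $X_*(T)_\Gamma$; this is automatically basic in $B(T)$ as $T$ is commutative.

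The key step is to show the image $[b']$ of $[b_T]$ in $B(G)$ is the basic element of $B(G,[\mu^{-1}])$. Two invariants determine a class in $B(G)$: the Kottwitz point $\kappa_G$ and the Newton point $\nu$. Compatibility of $\kappa$ with functoriality gives $\kappa_G([b']) = $ image of $-\mu_T$ in $\pi_1(G)_\Gamma$, which is $-\mu^\sharp$, matching the condition defining $B(G,[\mu^{-1}])$. For the Newton point: $\nu_{b_T}$ is the Galois average of $-\mu_T$ in $X_*(T)_{\mathbb{Q}}$, so it is Galois-fixed, hence lies in $X_*(T)_{\mathbb{Q}}^\Gamma = X_*(Z_G)_{\mathbb{Q}}^{\Gamma}$ because $T$ is elliptic. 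A central Newton point means $[b']$ is basic. Since basic classes are determined by $\kappa_G$, $[b']=[b]$.

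The main obstacle I expect is the existence of an elliptic maximal torus (and ensuring $\mu_T$ can be chosen in it, which is harmless); I would cite the standard result (e.g.\ Kottwitz's "Sign changes" or Rapoport--Zink Prop.\ 1.25 approach) rather than reprove it. The Newton-point and Kottwitz-point bookkeeping is routine.
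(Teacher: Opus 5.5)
Your argument is correct and follows the paper's proof: choose an elliptic maximal torus (the paper cites Platonov--Rapinchuk, Theorem 6.21), take $b_T$ in the unique class of $B(T,\mu_T^{-1})$, and identify the image of $[b_T]$ in $B(G)$ with $[b]$. The paper cites Kottwitz for this identification, which you unpack via the central Newton point and the injectivity of $\kappa_G$ on basic classes; the special point is then the image of $\mathrm{Spd}\,\breve{\mathbb{Q}}_p(\mu_T)$ under a functoriality map. One small caution: your first suggested route to an elliptic torus, via an anisotropic inner form, is not available in general (e.g.\ split $G_2$ over $\mathbb{Q}_p$ has no nontrivial inner forms by Kneser's theorem), so rely on the general existence result or the Coxeter-torus construction instead.
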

\begin{proof}
Applying  \cite[Theorem 6.21]{platonov-rapinchuk}, we find there is an elliptic (i.e. anisotropic modulo center) maximal torus $T \leq G$. Fix a cocharacter $\mu_T$ of $T_{\overline{\mathbb{Q}}_p}$ mapping to $[\mu]$, and let $[b_T]$ be the unique element in $B(T,\mu_T^{-1})$. By \cite[Proposition 5.3]{kottwitz:isocrystals}, the image of $[b_T]$ in $B(G)$ is basic. Then by combining \cite[Remark 5.8]{kottwitz:isocrystals} and \cite[6.4]{kottwitz:isocrystals-II}, we find that the image of $[b_T]$ is the unique basic conjugacy class in $B(G, [\mu^{-1}])$, i.e. $[b]$. It follows from \cite[Proposition 3.1]{rapoport-viehmann} (though this is overkill for a torus!) that 
\[ \Gr_{\mu_T}^{b-\adm}=\Fl_{\mu_T^{-1}}^{b_T-\adm, \diamond} \neq \emptyset,\]
thus it must be equal to $\Fl_{\mu_T^{-1}}^\diamond=\Spd\+ L(\mu_T)$. In particular, if we choose a $g \in G(\Qpbreve)$ such that $gb_T\sigma(g^{-1})=b_G$ inducing a functoriality map as in \cite[\S 7.5]{howe-klevdal:ap-pahsII}, then we obtain a special point $\Spd\+ L(\mu_T) \rightarrow \Gr_{[\mu], L(\mu_T)}^{b-\adm}$.  
\end{proof}

\begin{remark}\label{remark.existence-of-special-points} A complete characterization of those $(G,b,[\mu])$ with $[\mu]$ minuscule such that $\Fl_{[\mu^{-1}]}^{b-\adm}$ contains a special point is described in \cite[Theorem 1 and the following two paragraphs]{KisinMadapusiShin.HondaTateTheoryForShimuraVarieties}: if there is a special point, then one can $\sigma$-conjugate so that $b$ arises from a basic element of a torus. Then, the slope morphism can be defined over $\mbb{Q}_p$ and $b$ is also a basic element for the centralizing Levi subgroup. Conversely, if $b$ arises from a basic element of a Levi subgroup then, applying  \cite[Corollary 1.1.15]{KisinMadapusiShin.HondaTateTheoryForShimuraVarieties}, one reduces the existence of a special point in $\Gr_{[\mu]}^{b-\adm}$ to the basic case treated in \cref{lemma.existence-special-point}. Note that this also characterizes the local Shimura varieties that contain special points. We thank Keerthi Madapusi for pointing out this result; it would be interesting to know if it extends to the non-minuscule case. 
\end{remark}

\subsection{Density of special points}

We now restrict to the case of minuscule $[\mu]$, where $\Gr_{[\mu]}=\Fl_{[\mu^{-1}]}^\diamond$, and thus the non-empty open $\Gr_{[\mu]}^{b-\adm}$ is the diamondification of a unique non-empty open rigid analytic subvariety $\Fl_{[\mu^{-1}]}^{b-\adm} \subseteq \Fl_{[\mu^{-1}]}$. 

\begin{theorem}\label{theorem.special-points-dense}
     Let $G/\mathbb{Q}_p$ be a connected reductive group, let $[\mu]$ be a minuscule conjugacy class of cocharacters of $G_{\overline{\mbb{Q}}_p}$, and let $b \in G(\Qpbreve)$ represent a class $[b]\in B(G,[\mu^{-1}])$. Then, for $L/\Qpbreve([\mu])$ any non-archimedean extension, the special points are rigid Zariski dense in $\Fl_{[\mu^{-1}], L}^{b-\adm}$ if and only if $[b]$ is basic.
\end{theorem}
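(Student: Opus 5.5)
The plan follows the outline sketched in the introduction and splits into the two implications. Everything rests on \cref{prop.special-points-loc}: the special points form a finite union of $G_b(\mathbb{Q}_p)$-orbits, each defined over a finite extension of $\Qpbreve([\mu])$. Since Zariski density can be checked after the base change to $L$, it suffices to understand the rigid Zariski closure of a single $G_b(\mathbb{Q}_p)$-orbit $G_b(\mathbb{Q}_p)\cdot x$ in $\Fl^{b-\adm}_{[\mu^{-1}],L}$.

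\textbf{Basic case.} Assume $[b]$ is basic. Then $G_b$ is an inner form of $G$, so $G_{b,\Qpbreve} \cong G_{\Qpbreve}$, and $G_b(\mathbb{Q}_p)$ is Zariski dense in $G_b$ because $G_b$ is connected reductive over the infinite perfect field $\mathbb{Q}_p$ and hence unirational. By \cref{lemma.existence-special-point} there is a special point $x$. Let $Z$ be the rigid Zariski closure of $G_b(\mathbb{Q}_p)\cdot x$. I would show that $Z$ is stable under the algebraic group $G_{b,L}\cong G_L$. The set $\{g : gZ\subseteq Z\}$ is cut out by analytic conditions on $G$ and contains $G_b(\mathbb{Q}_p)$. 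Under the identification $G_b\otimes \Qpbreve \cong G\otimes\Qpbreve$, the group $G_b(\mathbb{Q}_p)$ is moreover an open subgroup's worth of analytic Zariski dense points near the identity, using that it is a $p$-adic Lie group of full dimension. Hence $Z$ contains the $G(L)$-orbit of $x$ intersected with the admissible locus. Since $G$ acts transitively on $\Fl_{[\mu^{-1}]}$, this intersection is the whole admissible open. Finally, a nonempty open subset of the irreducible flag variety is analytically Zariski dense in each connected component of the admissible locus, and those components are permuted by $G_b(\mathbb{Q}_p)$. The delicate point is to phrase this density argument carefully: I will use that the $G_b(\mathbb{Q}_p)$-orbit map is a $p$-adic analytic submersion onto an open of $\Fl$.

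\textbf{Non-basic case.} Assume $[b]$ is not basic. By \cref{remark.existence-of-special-points}, if any special point exists we can $\sigma$-conjugate $b$ so that it comes from a basic element of a torus. The slope cocharacter $\nu_b$ is then defined over $\mathbb{Q}_p$, and its centralizer $M$ is a proper Levi subgroup, proper because $b$ is non-basic, with $G_b\subseteq M$ an inner form. Each of the finitely many special orbits is then contained in $G_b(\mathbb{Q}_p)\cdot\Fl_{M,[\mu_T^{-1}]}$ for the finitely many choices of $M$-conjugacy class of $\mu_T$ appearing in \cref{prop.special-points-loc}. Each $\Fl_{M,[\mu']}\hookrightarrow \Fl_{[\mu^{-1}]}$ is a closed subvariety of strictly smaller dimension, since $M$ is proper and the orbit of $\mu'$ under $M$ is smaller than its orbit under $G$. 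Translates by $G_b(\mathbb{Q}_p)\subseteq M(\Qpbreve)$ preserve these subvarieties, so the special points lie in a finite union of proper Zariski closed subsets, and density fails. The main obstacle is bookkeeping $\mu_T$ versus $\nu_b$ so that $\Fl_M$ really is proper in every component. I would handle a potential equality of dimensions by noting that $\mu$ non-central in $G$ modulo $M$ follows from $[b]\in B(G,[\mu^{-1}])$ non-basic, because the Newton point is bounded by $\mu$.
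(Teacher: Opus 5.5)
\textbf{Basic case: missing input.} The basic direction has a genuine gap. Your argument never controls the connected components of $\Fl^{b-\adm}_{[\mu^{-1}],L}$, and this is exactly where the paper uses a deep input: the connectedness of the admissible locus \cite{gleason-lourenco:connectedness}.

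The global step ``$Z$ is stable under $G_L$, hence contains $G(L)x\cap\Fl^{b-\adm}$'' does not make sense as written, because $G(L)$ does not preserve the open admissible locus. Algebraic Zariski density of $G_b(\mathbb{Q}_p)$ coming from unirationality also says nothing about rigid Zariski density inside an open subset.

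What your idea does prove, once localized, is the following. For $z\in Z$, choose neighbourhoods $W$ of $1$ in $G_L^{\an}$ and $V$ of $z$ with $W\cdot V\subseteq\Fl^{b-\adm}$. The set of $g\in W$ with $g(Z\cap V)\subseteq Z$ is analytic and contains $G_b(\mathbb{Q}_p)\cap W$. That set is rigid Zariski dense in $W$ because $\Lie G_b(\mathbb{Q}_p)\otimes_{\mathbb{Q}_p}\Qpbreve=\Lie G_{\Qpbreve}$. This is the paper's power-series argument, and it is the correct version of your last sentence: the orbit is a $\mathbb{Q}_p$-analytic submanifold and is not open in $\Fl(L)$, so ``submersion onto an open'' is not right.

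Consequently $Z$ is open and closed. In other words, the rigid Zariski closure of the special points is exactly the union of those connected components of the admissible locus that meet one of the finitely many special orbits. To conclude, you need every component to meet a special orbit. ``The components are permuted by $G_b(\mathbb{Q}_p)$'' does not give this: you would need transitivity of $G_b(\mathbb{Q}_p)$ on $\pi_0$, which you have not justified and which is no easier than connectedness. Once connectedness is available, the single special point of \cref{lemma.existence-special-point} finishes the argument, as in the paper.

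\textbf{Non-basic case: dimension count.} This direction is essentially the paper's. Using the closed orbit $M\cdot y\cong M/(M\cap P_y)$ inside the projective flag variety is a harmless, slightly more elementary substitute for the special subvariety $\Fl^{b-\adm}_{[\mu_M^{-1}]}$. Here $y$ is a special point whose cocharacter factors through a torus of $M$, and $G_b(\mathbb{Q}_p)\subseteq M(\Qpbreve)$.

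However, the justification ``strictly smaller dimension since $M$ is proper'' is false in general. The dimensions agree exactly when every root pairing non-trivially with $\mu$ is a root of $M$. This happens, for example, for $G=G_1\times G_2$ with $\mu$ central on $G_2$ and $M=G_1\times M_2$.

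You need a factor-by-factor argument on $G^{\mathrm{ad}}$:
\begin{itemize}
\item On a simple factor where $\mu$ is central, $b$ is basic there (this is what your Newton-point remark gives), so $M$ is that whole factor.
\item On a simple factor where $\mu$ is non-central, the roots not orthogonal to $\mu$ span the root space, so a Levi containing all of them is the whole factor. The paper phrases this via the highest root.
\end{itemize}
Since $b$ is non-basic, some factor has $M$ proper, and on that factor the dimension genuinely drops.
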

\begin{proof}
Suppose $[b]$ is basic. Then, $G_b$ is an inner form of $G$. In particular, $\Lie\+ G_b(\mbb{Q}_p)$ spans $\Lie\+ G_{\Qpbreve}$. Thus, any orbit of $G_b(\mbb{Q}_p)$ on $\Fl_{[\mu^{-1}], L}$ is rigid Zariski dense in any open $U$ such that each connected component of $U$ intersects the orbit (e.g., because the values of a rigid analytic function restricted to the orbit contain enough information to compute its partial derivatives of all orders at any point in the orbit, and thus its power series expansion at any such point). Since $\Fl_{[\mu^{-1}],L}^{b-\adm}$ is connected by the results of \cite{gleason-lourenco:connectedness} (see \cite[Proposition 7.11]{howe-klevdal:ap-pahsII} for the statement with the same notation we use here), and contains a special point by \cref{lemma.existence-special-point}, we conclude. 

If $b$ is not basic, it suffices to show that each of the finitely many $G_b(\mbb{Q}_p)$-orbits of \cref{prop.special-points-loc} is contained in a strict special subvariety (since these are rigid Zariski closed by \cite[Lemma 7.22 and Lemma 2.3]{howe-klevdal:ap-pahsII}). For $(T, \mu_T)$ a pair consisting of a torus and a cocharacter giving rise to the orbit, we may assume that $b$ lies in $T(\Qpbreve)$. Then, as in \cref{remark.existence-of-special-points}, since $b$ is basic for $T$, the slope morphism $\nu_b$ is defined over $\mathbb{Q}_p$. Thus, so is its centralizer $M$, and $G_b$ is an inner form of $M$. In particular, writing $\mu_M$ for $\mu_T$ viewed as a cocharacter of $M$, the entire $G_b(\mathbb{Q}_p)$-orbit of the associated special point is contained in the special subvariety $\Fl_{[\mu_M^{-1}],C}^{b-\adm} \subseteq \Fl_{[\mu^{-1}],C}^{b-\adm}$. This containment is strict: since these are open subsets, it suffices to show that if the flag variety for $M$ has the same dimension as the flag variety for $G$, then $M=G$ (which, since we are outside the basic case, we have assumed does not hold!). But they have the same dimension if and only if, assuming $M_C$ and $P_{\mu^{-1},C}$ are a standard Levi and parabolic, the unipotent radical of $P_{\mu^{-1},C}$ is contained in $M_C$. Passing to the adjoint group and simple factors, we may assume $G$ is simple. Then, if $P_{\mu^{-1},C}$ is a proper parabolic subgroup, the corresponding factor of $M_C$ contains the highest root, and thus is equal to $G$. On the other hand, if $P_{\mu^{-1},C}=G$, then $\mu$ is central and so it follows that the $b$ is basic (since, e.g., in this case the Hodge-Tate period domain $\Fl_{[\mu]}$ has only one Newton stratum, but we know the basic stratum is non-empty); since $M_C$ is the centralizer of the slope morphism, it follows also in this case that $M=G$.  

\end{proof}

\begin{remark}\label{remark.special-subvarieties}
From the proof of \cref{theorem.special-points-dense}, we find that in the non-basic case each $G_b(\mathbb{Q}_p)$-orbit of special points lies in a strict basic special subvariety associated to a Levi subgroup of $G$ that is a $\mathbb{Q}_p$-form of the centralizer of the slope morphism $\nu_b$. 
\end{remark}

In this minuscule setting, the \'{e}tale covers $\mathcal{M}_{b,[\mu]}/K \rightarrow \Gr_{[\mu]}^{b-\adm}$ arise from the diamondification of rigid analytic \'{e}tale covers $\mathcal{M}_{b,[\mu],K} \rightarrow \Fl_{[\mu^{-1}]}^{b-\adm}$.  These are the local Shimura varieties of \cite{scholze:berkeley}, first conjectured to exist in \cite{rapoport-viehmann}. Since the special points in $\mc{M}_{b,[\mu],K}$ are the preimage of the special points in $\Fl_{[\mu^{-1}]}^{b-\adm}$, we obtain as an immediate consequence

\begin{corollary}\label{corollary.density-of-special-points}
 Let $G/\mathbb{Q}_p$ be a connected reductive group, let $[\mu]$ be a minuscule conjugacy class of cocharacters of $G_{\overline{\mbb{Q}}_p}$, and let $b \in G(\Qpbreve)$ represent a class $[b]\in B(G,[\mu^{-1}])$. The special points are rigid Zariski dense in the local Shimura variety $\mc{M}_{b,[\mu],K}$ if and only if $[b]$ is basic. 
\end{corollary}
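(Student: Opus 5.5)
We reduce \cref{corollary.density-of-special-points} to \cref{theorem.special-points-dense} by transporting rigid Zariski density along the finite étale map $f \colon \mc{M}_{b,[\mu],K} \to \Fl_{[\mu^{-1}]}^{b-\adm}$. As noted before the statement, the special points of $\mc{M}_{b,[\mu],K}$ are exactly $f^{-1}$ of the special points of the flag variety. So it suffices to show that a subset $\Sigma \subseteq \Fl^{b-\adm}_{[\mu^{-1}]}$ is rigid Zariski dense if and only if $f^{-1}(\Sigma)$ is rigid Zariski dense in $\mc{M}_{b,[\mu],K}$. For the $\Sigma$ at hand we also use that it is $G_b(\mbb{Q}_p)$-stable, and that $f$ is $G_b(\mbb{Q}_p)$-equivariant with $G(\mbb{Q}_p)$-equivariant tower structure.

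\emph{Density upstairs implies density downstairs.} Suppose $Z \subsetneq \Fl^{b-\adm}_{[\mu^{-1}]}$ is Zariski closed and contains $\Sigma$. Then $f^{-1}(Z)$ is Zariski closed, contains $f^{-1}(\Sigma)$, and is proper: $f$ is surjective, since the $G(\mbb{Q}_p)$-torsor $\pi_{\mc{L}_\et}$ is surjective. Thus if $\Sigma$ is not dense, neither is $f^{-1}(\Sigma)$. This gives the non-basic direction directly from \cref{theorem.special-points-dense}.

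\emph{Density downstairs implies density upstairs.} This is the main point. Assume $b$ is basic, and let $Z \subseteq \mc{M}_{b,[\mu],K}$ be the Zariski closure of $f^{-1}(\Sigma)$. I would argue locally on a special point, reusing the argument in the proof of \cref{theorem.special-points-dense}. Take a special point $y \in \mc{M}_{b,[\mu],K}$ over $x = f(y)$. Because $f$ is étale, it is a local isomorphism near $y$ and $x$ in the rigid analytic sense. Moreover, a small compact open subgroup $K_b \le G_b(\mbb{Q}_p)$ fixing the connected component of the fiber, or at least acting compatibly, maps an orbit neighborhood of $y$ to $K_b \cdot x$. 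More precisely, $G_b(\mbb{Q}_p)$ acts on $\mc{M}_{b,[\mu],K}$ compatibly with $f$, and $f^{-1}(\Sigma)$ is stable under this action. The orbit of $y$ under $G_b(\mbb{Q}_p)$ therefore lies in $f^{-1}(\Sigma)$. Since $\Lie\+ G_b(\mbb{Q}_p)$ spans $\Lie\+ G$, whose image spans the tangent space of the flag variety, and since $df$ is an isomorphism, the orbit of $y$ contains enough points to determine all Taylor coefficients at $y$ of any function vanishing on it. Hence any rigid analytic function vanishing on $f^{-1}(\Sigma)$ vanishes on the connected component of $\mc{M}_{b,[\mu],K}$ containing $y$.

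It remains to check that every connected component of $\mc{M}_{b,[\mu],K}$ contains a special point. This is the step I expect to need care. I would use that $G(\mbb{Q}_p)\times G_b(\mbb{Q}_p)$ acts transitively on $\pi_0$ of the tower, which should follow from connectedness of the base (\cite{gleason-lourenco:connectedness}) together with transitivity of $G(\mbb{Q}_p)$ on fibers. Given this, the set of special points, being stable under both actions and nonempty by \cref{lemma.existence-special-point}, meets every component. Combined with the previous paragraph, this shows the special points are Zariski dense in $\mc{M}_{b,[\mu],K}$ in the basic case.
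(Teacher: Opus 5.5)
Your argument is essentially the paper's. The paper deduces \cref{corollary.density-of-special-points} as an immediate consequence of \cref{theorem.special-points-dense}, using only that the special points of $\mc{M}_{b,[\mu],K}$ are the full preimage of those in $\Fl_{[\mu^{-1}]}^{b-\adm}$; your pullback of a proper Zariski closed subset in the non-basic case, and your transport of the $G_b(\mbb{Q}_p)$-orbit argument through the \'etale map at the classical special points in the basic case, supply exactly the details that ``immediate'' suppresses.

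Two things to tighten. First, the map $\mc{M}_{b,[\mu],K}\to\Fl_{[\mu^{-1}]}^{b-\adm}$ is \'etale with infinite fibers $G(\mbb{Q}_p)/K$, not finite \'etale. Second, connectedness of the base plus fiber-transitivity does not by itself show that every connected component of $\mc{M}_{b,[\mu],K}$ meets the special locus; what you need is that each component surjects onto $\Fl_{[\mu^{-1}]}^{b-\adm}$. This holds because, after base change along the map itself, the $G(\mbb{Q}_p)$-torsor acquires a tautological reduction to $K$, so the map becomes a disjoint union of finite \'etale covers; hence images of connected components are open and closed in the connected base.
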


\begin{example}\label{example.serre-tate}
    Let $G=\GL_2$, let $\mu=\mr{diag}(z^{-1},1)$, and let $b=\mr{diag}(p,1)$. This is the case associated to the Serre-Tate deformation space of an ordinary elliptic curve. The admissible locus is $\mbb{A}^1_{\Qpbreve} \subseteq \mbb{P}^1_{\Qpbreve}$, and,    
    for $K=\GL_2(\mbb{Z}_p)$, the associated local Shimura variety is given by the open rigid analytic unit disk of $|z-1|<1$, and the Hodge period map is $z \mapsto \log(z)$. The only conjugacy class of tori giving rise to special points it the split class containing the diagonal torus $T$, and it gives rise to a unique special point in the admissible locus, $0 \in \mbb{A}^1_{\Qpbreve}$. Its preimage in the local Shimura variety is thus the set of roots of unity, which is Zariski closed as the vanishing locus of the rigid analytic function $\log(z)$. 
\end{example}

\begin{remark}
    In \cite{howe-klevdal:ap-pahsII}, we considered a notion of rigid Zariski closure that can be applied to $\Gr_{[\mu],L}^{b-\adm}$ even when $[\mu]$ is not minuscule --- the rigid Zariski closure of a set $S \subseteq |\Gr_{[\mu],L}^{b-\adm}|$ is the intersection of all closed rigid analytic subdiamonds containing $S$. If $b$ is basic, $[\mu]$ is \emph{not} minuscule, and $L$ is a $p$-adic field then, by arguing with the $G_b(\mathbb{Q}_p)$-action as in \cref{theorem.special-points-dense}, we find again that the special points are rigid Zariski dense. However, now the reason is because they are not contained in \emph{any} rigid analytic subdiamond (if they were contained in a rigid analytic subdiamond $Z$, the composition of the inclusion of $Z$ with the Bialynicki-Birula map would fail Griffiths transversality, contradicting \cite[Theorem 5.4-(4)]{howe-klevdal:ap-pahsII}). The statement is no longer an if and only if: if $b$ is not basic, but $[\mu]$, viewed as a conjugacy class of cocharacters of the centralizer of the slope morphism, is non-minuscule, then the same argument shows that if there are any special points then they are rigid Zariski dense. We expect similar statements to hold for arbitrary $L$ --- more precisely, we expect that maps to the flag variety from rigid analytic varieties that factor through $\Gr_{[\mu],L}$ always satisfy Griffiths transversality, even though this will not induce a bijection as in \cite[Theorem 5.4-4]{howe-klevdal:ap-pahsII} if $L$ is not discretely valued (this expectation is supported by the differential perspective on the Bialynicki-Birula map of \cite[Corollary 6.3.2]{Howe.InscriptionTwistorsAndPAdicPeriods}). 
\end{remark}

\subsection{Failure of a $p$-adic Andr\'e-Oort conjecture}\label{ss.p-adic-andre-oort}

Let 
    \[ b = {\small \begin{pmatrix} & 1 & & \\ & & \ddots & \\ & & & 1 \\ p & & & \end{pmatrix} } \in \GL_n(\breve{\QQ}_p).\]
The class $[b]$ is the unique basic class in $B(\GL_n, [\mu^{-1}])$ for $\mu(t) = \mr{diag}(t^{-1}, 1, \ldots, 1)$. Since $\mu$ is minuscule, $\Gr_{[\mu]} = \Fl_{[\mu^{-1}]}^\diamond$. The flag variety is simply projective space $\mathbb{P}^{n-1}$, and in this case the $b$-admissible locus is equal to all of $\mbb{P}^{n-1}_{\Qpbreve}$. Indeed, the associated local Shimura variety at level $\GL_n(\mbb{Z}_p)$, $\mc{M}_{b,[\mu],\GL_n(\mbb{Z}_p)}$, can be identified with a disjoint union of copies of the height $n$ Lubin-Tate space (which is an open unit disk of dimension $n-1$ arising as the rigid generic fiber of the formal deformation space of the unique height $n$ one-dimensional formal group over $\overline{\mbb{F}}_p$) such that the finite level Hodge period map $\mc{M}_{b,[\mu],\GL_n(\mbb{Z}_p)} \rightarrow \mbb{P}^{n-1}_{\Qpbreve}$ restricts on each component to the Gross-Hopkins period map, which is surjective by \cite[Proposition 23.5]{gross-hopkins:equivariant}\footnote{This surjectivity was used in the first proof of the classification of vector bundles on the Fargues--Fontaine curve \cite{fargues-fontaine:courbes}. Fargues and Scholze \cite[II.2]{fargues-scholze:geometrization} gave a new proof of the classification that is independent of this computation, and thus one can instead use the classification to show the period map is surjective directly from the definition in terms of modifications of bundles without having to make the link between modifications and the theory of $p$-divisible groups. }.

The group $G_b(\mbb{Q}_p)$ can be identified very explicitly inside of $\GL_n(\breve{\QQ}_p)$: first, we consider the embedding 
\[ \iota: \mathbb{Q}_{p^n} \hookrightarrow M_n(\Qpbreve), a \mapsto \mr{diag}(a, \sigma^{n-1}(a), \sigma^{n-2}(a), \ldots, \sigma(a)). \]
Then we can construct a division algebra $D$ inside of $M_n(\Qpbreve)$ as the set of linear combinations 
\begin{equation} \label{eq.D-expression} \iota(a_0) + \iota(a_1) b + \iota(a_2) b^2 + \ldots \iota(a_{n-1}) b^{n-1},\; a_i \in \mbb{Q}_{p^n}.  \end{equation}
Then, it is elementary to check that $G_b(\mbb{Q}_p) = D^\times \subseteq \GL_n(\Qpbreve)$, i.e. it is the set of those combinations where not all coefficients $a_i$ are zero. 

In particular, since these matrices all lie in $\GL_n(\mbb{Q}_{p^n})$, the action of $D^\times$ is defined already on $\mbb{P}^{n-1}_{\QQ_{p^n}}$. We claim it is moreover transitive on $\mbb{P}^{n-1}(\QQ_{p^n})$. Indeed, the first column of the matrix in \cref{eq.D-expression} is given by
\[ \begin{bmatrix} a_0 \\ p\sigma^{n-1}(a_{n-1}) \\ \vdots \\ p \sigma(a_1) \end{bmatrix} \]
Since any vector in $\QQ_{p^n}^n$ can be hit by making a suitable choice of the $a_i$, it follows that any element of $\mbb{P}^{n-1}(\QQ_{p^n})$ is in the $D^\times$-orbit of $[1:0:\ldots:0].$ On the other hand, $\mbb{Q}_{p^n}^\times \subseteq D^\times$ is the stabilizer of the point $[1:0:\ldots:0]$. Since this is a torus, $[1:0:\ldots:0]$ is a special point, and we conclude every point of $\mbb{P}^{n-1}(\QQ_{p^n})$ is a special point (this is the unique $D^\times$-orbit of special points corresponding to the elliptic torus $\mathrm{Res}_{\mbb{Q}_{p^n} /\mbb{Q}_p} \mbb{G}_m \leq \GL_{n,\mbb{Q}_p}$ as in \cref{ss.special-points-structure}). 

As a consequence of these computations, we find:
\begin{proposition}\label{prop.rigid-zar-dense}
    Let $Z \subseteq \mbb{P}^{n-1}_{\mbb{Q}_{p^n}}$ be a connected rigid subvariety such that $Z(\mbb{Q}_{p^n})$ contains a smooth point. Then, for any $L/\breve{\mbb{Q}}_p$, special points are rigid Zariski dense in $Z_L \subseteq \mbb{P}^{n-1}_{L}=\mbb{P}_{L}^{n-1, b-\adm}$. 
\end{proposition}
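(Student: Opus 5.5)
The plan is to use the computation just made, that every point of $\mbb{P}^{n-1}(\QQ_{p^n})$ is special. It then suffices to show that $Z(\QQ_{p^n})$ is rigid Zariski dense in $Z_L$ for every $L/\breve{\QQ}_p$. The argument is the same "power series at a point" argument used in the basic case of \cref{theorem.special-points-dense}, with the $G_b(\QQ_p)$-orbit replaced by the $\QQ_{p^n}$-points of $Z$.

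First, localize at the smooth rational point. Let $z_0 \in Z(\QQ_{p^n})$ be a smooth point and let $d = \dim_{z_0} Z$. Since $Z$ is smooth at $z_0$ and $\QQ_{p^n}$ is complete, the rigid analytic implicit function theorem gives an open neighborhood $U \subseteq Z$ of $z_0$ together with an isomorphism $\phi \colon U \xrightarrow{\sim} \bbD^d_{\QQ_{p^n}}$ onto a closed polydisc, defined over $\QQ_{p^n}$ and sending $z_0$ to $0$. Concretely, I would use a coordinate projection of an affine chart of $\mbb{P}^{n-1}$ that is étale on $Z$ at $z_0$, and shrink. This gives $U(\QQ_{p^n}) \cong \bbD^d(\QQ_{p^n}) \supseteq (p^N\mathcal{O}_{\QQ_{p^n}})^d$ for some $N$.

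Next, show that a rigid analytic function $f$ on $U_L \cong \bbD^d_L$ vanishing on these $\QQ_{p^n}$-points is zero. Write $f = \sum a_I x^I$ with $a_I \in L$. Since $f$ vanishes on the infinite set $(p^N\mathcal{O}_{\QQ_{p^n}})^d$, induction on $d$ using the one-variable fact that a nonzero convergent power series on a closed disc has only finitely many zeros (Weierstrass preparation) gives $f = 0$. Equivalently, all partial derivatives at $0$ can be computed from values on these points. Hence any Zariski closed subset of $Z_L$ containing $Z(\QQ_{p^n})$ contains $U_L$.

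Finally, propagate from $U_L$ to all of $Z_L$. Let $W \subseteq Z_L$ be the rigid Zariski closure of the special points; it contains $U_L$. I expect this step to be the main obstacle. One needs that a Zariski closed subset containing a nonempty open of an irreducible component forces that whole component, and that connectedness of $Z$ yields all of $Z_L$. Connectedness alone is not enough if $Z$ has several irreducible components, so I would read the hypothesis as "$Z$ irreducible" (or as requiring a smooth rational point on each irreducible component of $Z_L$). I would invoke the identity principle for irreducible rigid spaces, namely Conrad's result that an irreducible rigid space cannot contain a proper analytic subset with nonempty interior, using that irreducibility is preserved by the base change from $\QQ_{p^n}$ to $L$ for a normal or geometrically irreducible $Z$. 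This last point is where I would be most careful: after extending scalars, the components of $Z_L$ may not all meet $U_L$, and I would handle this by reducing to $Z$ geometrically irreducible, which is the intended application (curves such as those in \cref{example.andre-oort-counterexample}).
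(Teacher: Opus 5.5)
This is essentially the paper's proof. A smooth $\mathbb{Q}_{p^n}$-rational point gives a $\dim Z$-dimensional ball of $\mathbb{Q}_{p^n}$-points, all of them special, and a rigid function vanishing on that ball has zero expansion there; your Weierstrass induction plays the role of the paper's remark about recovering partial derivatives.

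Your caveat about the last step is correct, and the paper's proof glosses over it by invoking only connectedness of $Z_L$. The identity principle needs irreducibility. This holds in the intended application (smooth connected curves). Moreover, an irreducible $Z$ with a smooth rational point is automatically geometrically irreducible, so your extra worry about base change goes away.

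In fact the statement as literally written fails for reducible $Z$. For $n=3$, let $Z=\ell\cup C'$, where $\ell$ is a $\mathbb{Q}_{p^3}$-line and $C'$ is a conic with no $\mathbb{Q}_{p^3}$-points. Then $Z$ is connected, and its smooth rational points all lie on $\ell$.
\begin{itemize}
\item Every special point is an eigenline, with eigencharacter $\tau$, of some maximal subfield $F\subseteq D$ acting through $D\subseteq M_3(\mathbb{Q}_{p^3})$.
\item If $\tau(F)=\mathbb{Q}_{p^3}$, the eigenline is a $\mathbb{Q}_{p^3}$-point, so it does not lie on $C'$.
\item If $\tau(F)\neq\mathbb{Q}_{p^3}$, the eigenline satisfies $x=b\tilde\sigma(x)$ for a lift $\tilde\sigma$ of $\sigma$ fixing $\tau(F)$. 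This places $x$ in the single finite set $C'\cap b\sigma(C')$.
\item That set is finite because $b\sigma(C')\neq C'$: otherwise $C'$ would descend to a genus-zero curve on the Severi--Brauer variety of $D$, giving a point over a quadratic extension, which is impossible since $D$ has index $3$.
\end{itemize}
Hence $C'_L$ contains only finitely many special points, and they are not Zariski dense in $Z_L$.
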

\begin{proof}
    Let $z \in Z(\mbb{Q}_{p^n})$ be a smooth point. Then, $Z(\mbb{Q}_p^n)$ contains a ball of dimension $\dim Z$ around $x$. By the above discussion, this ball consists entirely of special points. On the other hand, such a ball is rigid Zariski dense in $Z_L$ for any $L$ --- indeed, the values of a rigid analytic function restricted to such a ball contain enough information to compute its partial derivatives of all orders at $x$, and thus its power series expansion at $x$. This power series expansion is zero if and only if the function is zero (since $Z$ is connected and contains a rational point, $Z_L$ is also connected). 
\end{proof}

Applying \cref{prop.rigid-zar-dense}, we obtain the following explicit example illustrating the failure of the n\"aive formulation of a $p$-adic Andr\'{e}-Oort conjecture for local Shimura varieties. 

\begin{example}\label{example.andre-oort-counterexample}
Let $Z \subseteq \mbb{P}^{n-1}_{\mbb{Q}_{p^n}}$ be a smooth closed connected curve of positive genus containing a $\mbb{Q}_{p^n}$-rational point --- for example, when $n=3$, $Z$ could be the closure in $\mbb{P}^2$ of the affine plane curve $y^2=x^3-1$.  Then, by \cref{prop.rigid-zar-dense}, for any $L/\Qpbreve$, special points are dense in $Z_L$ when we view $\mbb{P}^{n-1}_{L}$ as $\mbb{P}^{n-1,b-\adm}_{L}$. However, $Z_L$ is not a special subvariety: indeed, in this case, all special subvarieties are flag varieties for reductive groups (because $G_b$ is anisotropic mod center, but any $H \leq \GL_n$ giving rise to a special subvariety must also have an inner form realized as a subgroup of $G_b$, thus cannot have a non-trivial unipotent radical --- see also \cite[Remark 4.5.2]{howe-klevdal:ap-pahsI}), thus admits no non-zero global one-forms, whereas the positive genus curve $Z$ does admit a non-zero global one-form. From this we deduce also that, for any $K \leq \GL_n(\mbb{Q}_p)$ compact open, any connected component of the pre-image of $Z_L$ in the local Shimura variety $\mc{M}_{b,[\mu],K} \times_{\Spa\+ \Qpbreve} \Spa\+ L$ contains a Zariski dense set of special points but is not a special subvariety.  
\end{example}

\section{A bi-analytic Ax--Schanuel conjecture}\label{s.ax-schanuel}

The main purpose of this section is to formulate a bi-analytic Ax--Schanuel conjecture for infinite level basic local Shimura varieties, \cref{conj.ax-schanuel} (see also \cref{remark.variants} for some further generalizations). In the setting of complex bi-algebraic geometry \cite{klingler-ullmo-yafaev:bialgebraic}, Ax--Schanuel type theorems typically refine Ax--Lindemann type theorems, and in \cref{remark.ax-lindemann-relation} we explain how \cref{conj.ax-schanuel} can be viewed as a partial generalization of the bi-analytic Ax--Lindemann theorem of \cite{howe-klevdal:ap-pahsII}. 

We begin in \cref{ss.ax-schan-setup} by setting up some notation  --- in particular, as in \cite{Howe.CohomologicalSmoothnessConjecture, ivanov-weinstein}, we will work with a variant of infinite level local Shimura varieties obtained by fixing a determinant to kill off a locally profinite set of connected components. In \cref{ss.tangent-transverse} we then define transverse and connected transverse intersections using the theory of Banach--Colmez Tangent Bundles arising from the inscribed structures on these spaces as described in \cite{Howe.InscriptionTwistorsAndPAdicPeriods, Howe.CohomologicalSmoothnessConjecture}. To formulate \cref{conj.ax-schanuel}, the key new input is a notion of exceptional intersections. We use our study of transversality to make such a definition in \cref{ss.exceptional-intersections}, and then show that special subvarieties give rise to natural exceptional intersections (\cref{example.strictly-special-positive-dimension-exceptional} and \cref{example.special-points-exceptional}). In \cref{ss.ax-schanuel} we state the Ax--Schanuel conjecture, \cref{conj.ax-schanuel}, which says that all exceptional intersections are contained in special subvarieties, and relate it to the bi-analytic Ax--Lindemann theorem.

\subsection{Setup}\label{ss.ax-schan-setup}
Let $G/\mathbb{Q}_p$ be a reductive group, let $[\mu]$ be a minuscule conjugacy class of cocharacters of $G_{\overline{\mbb{Q}}_p}$, and let $b$ represent the unique basic class in $B(G,[\mu^{-1}])$. We fix a $p$-adic field $L/\Qpbreve([\mu])$ and let $C:=\overline{L}^\wedge$. Our Ax--Schanuel conjecture will not be in terms of the associated moduli space $\M_{b,[\mu], L}$, but instead in terms of the variant obtained by fixing a determinant as in \cite{Howe.CohomologicalSmoothnessConjecture} (following \cite{ivanov-weinstein} in the EL case). We recall this variant now: let $G^\der$ be the derived subgroup of $G$ and $G^\ab=G/G^\der$. We write $\det$ for the projection $G \rightarrow G^\ab$, which induces by functoriality a map $\det_{b,[\mu]}: \M_{b,[\mu]} \rightarrow \M_{\det(b), \det(\mu), \Qpbreve([\mu])}$. We fix a point $\tau: \Spd\+ C \rightarrow \M_{\det(b), \det(\mu), \Qpbreve([\mu])}$ that lies in the image of $\det_{b,[\mu]}$ (such a point $\tau$ exists as a consequence of, e.g., \cite[Proposition 7.10]{howe-klevdal:ap-pahsII}), and we write $\M^{\tau}:=\tau^* \M_{b,[\mu]}$, a diamond over $\Spd\+ C$. We write a superscript $\tau$ on the period maps $\pi, \pi_\HT, \textrm{ and } \pi_\Hdg$ to denote the period maps obtained by base change to $C$ and restriction to $\M^{\tau}$; in particular, we have 
\[ \pi^\tau= \pi_{\Hdg}^\tau \times \pi_{\HT}^\tau : \mc{M}^\tau \rightarrow  \Fl_{[\mu^{-1}],C}^{b-\adm,\diamond} \times_{\Spd\+ C} X_{[\mu],C}^\diamond \subseteq \Fl_{[\mu^{-1}],C}^\diamond \times_{\Spd\+ C} \Fl_{[\mu],C}^\diamond. \]
We write $\impi:=\Fl_{[\mu^{-1}]}\times_{\mathbb{Q}_p([\mu])} \Fl_{[\mu]}$, so that the target of $\pi^\tau$ can be taken as $\impi_C^\diamond$. A locally closed rigid analytic subvariety $S \subseteq \impi_{C}$ is called $\overline{L}$-analytic if it can be defined over $L'$ for $L'/L$ an algebraic extension, i.e. if it is the base change to $C$ of a rigid analytic subvariety of $\impi_{L'}$.

\subsection{Tangent bundles and transverse intersections}\label{ss.tangent-transverse} We describe some constructions following \cite{Howe.CohomologicalSmoothnessConjecture}: For $\mf{g}^\circ := \Lie\+ G^\der$, we write $\mc{E}^\circ_{\mr{max}}$ for the vector bundle on the relative Fargues--Fontaine curve over $\M^\tau$ that is the minimal common modification of $\mf{g}^\circ \otimes_{\mbb{Q}_p} \mc{O}_\FF$ and $\mc{E}(\mf{g}_b^\circ)$, where $\mf{g}^\circ_b$ denotes the isocrystal $(\mf{g}^\circ_{\Qpbreve}, \mr{Ad}(b)\sigma)$ (note that, since $b$ is basic, this isocrystal is semistable of slope zero so $\mc{E}(\mf{g}^\circ_b)=(\mf{g}^\circ_b)^{\mr{Ad}(b)\sigma=1} \otimes_{\mathbb{Q}_p} \mathcal{O}_\FF$). 
We define the Banach Colmez Tangent Bundle of $\M^\tau$ as 
\[T_{\M^\tau} := \BC(\mc{E}^\circ_{\mr{max}}).\]
We will then define the derivative of $\pi^\tau$ as a map
\[ d\pi^{\tau}: T_{\M^\tau}=\BC(\mc{E}_{\max}^\circ) \rightarrow {\pi^{\tau}}^* T_{\impi} \]
 where $T_D$ is viewed as the vector bundle over $D^\diamond$ obtained from the rigid analytic tangent bundle of $D$. We first observe that, from the definitions of the period maps, 
\[ {\pi^{\tau}}^* T_D=\gr^{-1}_\Hdg (\mf{g}_b^\circ  \otimes_{\Qpbreve}\mathcal{O}) \oplus \gr^{-1}_\HT (\mf{g}^\circ \otimes_{\mathbb{Q}_p} \mathcal{O}).  \]
Note that it is important that we are in the minuscule case so that the tangent bundles of the corresponding flag varieties can be described as the $-1$ graded piece of the universal filtration on $\mf{g}$ or $\mf{g}^\circ$ (i.e., that this graded piece is equal to $\mf{g}^\circ/\Fil^0 \mf{g}^\circ$). 

Writing $\mathcal{E}^\circ_\mr{min}$ for the minimal common modification of  $\mf{g}^\circ \otimes_{\mbb{Q}_p} \mc{O}_\FF$ and $\mc{E}(\mf{g}_b^\circ)$ and writing $\infty$ for the canonical Cartier divisor on the relative Fargues--Fontaine curve, we claim there is a natural modification exact sequence
\begin{equation}\label{eq.dpitau-mod-seq} 0 \to \mc{E}_{\min}^\circ \to \mc{E}^\circ_{\mr{max}} \to \infty_\ast \pi^{\tau \ast} T_D \to 0. \end{equation}
We then define $d\pi^\tau$ to be the map 
\[ \BC(\mc{E}^\circ_{\mr{max}}) \rightarrow  {\pi^{\tau}}^* T_D=\gr^{-1}_\Hdg (\mf{g}_b^\circ  \otimes_{\Qpbreve}\mathcal{O}) \oplus \gr^{-1}_\HT (\mf{g}^\circ \otimes_{\mathbb{Q}_p} \mathcal{O}) \]
obtained by taking sections over the relative Fargues--Fontaine curve in \cref{eq.dpitau-mod-seq}. 

To obtain the modification sequence in \cref{eq.dpitau-mod-seq}, we can compute in coordinates. Let $P= \Spa(A, A^+)$ be an affinoid perfectoid space. A map $P \to \mc{M}^\tau$ gives a modification $\mc{E}_b \dashrightarrow \mc{E}_\triv$ bounded by $[\mu]$ which, upon restricting to $\mbb{B}_\dR(A)$ and using the canonical trivializations of $\mc{E}_b, \mc{E}_\triv$ at $\mbb{B}_\dR$, yields an element $\alpha \in G(\mbb{B}_\dR(A))$. For simplicity, write $(\mbb{B}_\dR(A), \mbb{B}_\dR^+(A)) = (B, B^+)$ and let $t \in B^+$ be a generator of $\ker(\theta \colon B^+ \to A)$. As in \cite[Proposition 6.2.3]{Howe.InscriptionTwistorsAndPAdicPeriods}, we form the lattice $\mc{L}_{\max}(P) = \alpha \mf{g}^\circ_{B^+} + \mf{g}^\circ_{B^+}$ and then $\mc{E}^\circ_{\max}|_P$ is the modification of $\mc{E}_\triv(\mf{g}^\circ)|_P$ at $\mc{L}_{\max}(P)$ (which is the same as the modification of $\mc{E}_b(\mf{g}^\circ)|_P$ at $\alpha^{-1}\mc{L}_{\max}(P)$). Similarly, $\mathcal{E}^\circ_{\min}$ is the modification at $\mc{L}_{\mr{min}}(P) = \alpha \mf{g}^\circ_{B^+} \cap \mf{g}^\circ_{B^+}$.

We have exact modification sequences
\begin{align*} 
0 \rightarrow \mc{E}_\triv(\mf{g}^\circ) \rightarrow \mc{E}_{\mr{max}}^{\circ} \rightarrow \infty_* \gr^{-1}_\Hdg (\mf{g}_b^\circ \otimes_{\mathbb{Q}_p} \mathcal{O}) \rightarrow 0 \\
0 \rightarrow \mc{E}(\mf{g}_b^\circ) \rightarrow \mc{E}_{\mr{max}}^{\circ} \rightarrow \infty_* \gr^{-1}_\HT (\mf{g}^\circ \otimes_{\Qpbreve} \mathcal{O}) \rightarrow 0. 
\end{align*}
Indeed, the first exact sequence amounts, after pulling back via a map $P \to \mc{M}^\tau$ (with notation as above), to the isomorphisms
    \[ \frac{\alpha \mf{g}^\circ_{B^+} + \mf{g}^\circ_{B^+}}{\mf{g}^\circ_{B^+}} \cong \left( \frac{\alpha \mf{g}^\circ_{B^+}}{t\alpha\mf{g}^\circ_{B^+}} \middle/  \frac{\alpha\mf{g}^\circ_{B^+} + \mf{g}^\circ_{B^+}}{t\alpha \mf{g}^\circ_{B^+}} \right) \cong \gr^{-1}_{\Hdg}(\mf{g}^\circ_A). \]
Here we use that $[\mu]$ is minuscule, which implies that $\mf{g}^\circ_{B^+}$ is contained in $t^{-1} \alpha \mf{g}^\circ_{B^+}$ and contains $t\alpha \mf{g}^\circ_{B^+}$. The second containment allows for the identification of the Hodge filtration in the second isomorphism, and from the first containment we have $\alpha \mf{g}^\circ_{B^+} \supseteq \alpha \mf{g}^\circ_{B^+} \cap \mf{g}^\circ_{B^+} \supseteq t\alpha \mf{g}^\circ_{B^+}$ which gives the first isomorphism. The second exact sequence above is arrived at in the same way. Combining these, we obtain an exact sequence of sheaves on the relative Fargues--Fontaine curve over $\mc{M}^\tau$
    \[ 0 \to \mc{E}_{\min}^\circ \to \mc{E}^\circ_{\mr{max}} \to \infty_\ast \pi^{\tau \ast} T_D \to 0, \] 
where the surjectivity of the last map reduces to a simple calculation at geometric points $P = \Spa(C) \to \mc{M}^\tau$ by --- using notation as above --- choosing a basis $e_1, \ldots, e_n$ of $\mf{g}_{B^+}^\circ$ so that $\alpha \mf{g}^\circ_{B^+}$ has basis $t^{-1}e_1, \ldots, t^{-1}e_{i-1}, e_i, \ldots, e_j, te_{j+1}, \ldots, te_{n}$.

\begin{remark} \label{remark.inscribed-origin-def}
In \cite{Howe.CohomologicalSmoothnessConjecture} it is explained that $T_{\M^\tau} $
is the Banach Colmez Tangent Bundle associated to a differentiable structure on $\M^\tau$, an \emph{inscription} in the terminology of \cite{Howe.InscriptionTwistorsAndPAdicPeriods}, and the definition of  $d\pi^\tau$ we give here comes from computing the derivative of an inscribed upgrade of the period maps (cf. \cite[Corollary 9.2.3]{Howe.InscriptionTwistorsAndPAdicPeriods}). We have avoided the explicit use of the inscribed theory here in order to make our computations and conjectures accessible via more broadly familiar objects.  
\end{remark}

\subsubsection{}Given a smooth (locally closed) rigid analytic subvariety $\iota:S \hookrightarrow \impi_C$, we consider the intersection
\[ \mc{M}^\tau \cap S := \mc{M}^\tau \times_{\impi_C^\diamond} S^\diamond. \]
We define its Banach Colmez Tangent Bundle by
\[ T_{\mc{M}^\tau \cap S}:=\BC(\mc{E}_{\mr{max}}^\circ)|_{\mc{M}^\tau \cap S}
\times_{{\pi^\tau}^*T_{\impi}|_{\M^\tau \cap S}}{ {\pi^\tau}^*T_S |_{\M^\tau \cap S}}\]
(which can be arrived at as in \cref{remark.inscribed-origin-def} by taking an inscribed fibered product). 

We can rewrite this as another Banach--Colmez space of global sections: let $\mc{E}^\circ_{S}$ be the vector bundle on the relative Fargues--Fontaine curve over $\M^\tau \cap S$ obtained by modifying $\mc{E}_{\mr{max}}^\circ$ along $T_{S} \subseteq T_{\impi}$. Then, for $N_S=T_{\impi}/T_{S}$ the normal bundle of $S$ in $\impi$, $\mc{E}^\circ_S$ sits in the exact sequence  
\begin{equation}\label{eq.intersection-tangent-bundle-sequence} 0 \rightarrow \mc{E}^\circ_S \rightarrow \mc{E}_{\mr{max}}^\circ|_{\mc{M}^\tau \cap S}\rightarrow \infty_* {\pi^\tau}^* (N_{S}) \rightarrow 0, \end{equation}
and we have $T_{\mc{M}^\tau \cap S}=\BC(\mc{E}^\circ_S)$.

\begin{definition}\label{def.transverse}
For a rank one geometric point $x: \Spd\+ C' \rightarrow \mc{M}^\tau \cap S$, we say $\mc{M}^\tau$ intersects $S$ \emph{transversely at $x$} if the map $d\pi^\tau + d\iota: T_{\M^\tau, x} \oplus T_{S,\pi^\tau(x)} \rightarrow T_{\impi, \pi^\tau(x)}$ is surjective (it is equivalent to ask this map be surjective as a map of Banach--Colmez spaces or as a map of $\mathbb{Q}_p$-vector spaces). We say $\mc{M}$ intersects $S$ \emph{transversely} if $\mc{M}$ intersects $S$ transversely at every rank one geometric point. 
\end{definition}

\begin{lemma}
For a rank one geometric point $x: \Spd\+ C' \rightarrow \mc{M}^\tau \cap S$, $\mc{M}^\tau$ intersects ${S}$ transversely at $x$ if and only if $x^* \mc{E}^\circ_S$ has non-negative Harder--Narasimhan slopes. 
\end{lemma}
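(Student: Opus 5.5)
Everything happens at the single geometric point $x$, so we work over $\FF_{C'}$ and drop the pullbacks $x^*$ from the notation. Write $\mc{E} := x^*\mc{E}^\circ_{\max}$, $\mc{F} := x^*\mc{E}^\circ_S$ and $N := N_{S,\pi^\tau(x)}$. The plan has two parts. First, rewrite transversality as the surjectivity of a single map of Banach--Colmez spaces. Second, compare that surjectivity with the vanishing of $H^1$ and use the classification of bundles on $\FF_{C'}$.

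For the first part, note that the map $d\pi^\tau + d\iota$ is surjective if and only if the composite
\[ T_{\M^\tau,x} \to T_{\impi,\pi^\tau(x)} \to N \]
is surjective, since $d\iota$ has image exactly $T_S$. By construction of $d\pi^\tau$, and of $\mc{E}^\circ_S$ via \cref{eq.intersection-tangent-bundle-sequence}, this composite is the map $H^0(\FF_{C'},\mc{E}) \to N$ obtained by taking global sections in the exact sequence $0 \to \mc{F} \to \mc{E} \to \infty_* N \to 0$. The long exact cohomology sequence reads
\[ H^0(\mc{E}) \to N \to H^1(\mc{F}) \to H^1(\mc{E}) \to 0. \]
So transversality at $x$ is equivalent to the injectivity of $H^1(\mc{F}) \to H^1(\mc{E})$.

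For the second part, we first show $\mc{E}$ has nonnegative slopes. From the modification sequence $0 \to \mc{E}(\mf{g}_b^\circ) \to \mc{E}^\circ_{\max} \to \infty_*(\cdots) \to 0$, the bundle $\mc{E}$ contains $\mc{E}(\mf{g}^\circ_b)$ with torsion cokernel. Since $b$ is basic, $\mc{E}(\mf{g}^\circ_b)$ is trivial, so $\mc{E}$ is a positive modification of a semistable slope-zero bundle and all its HN slopes are $\ge 0$. By the classification of bundles on $\FF_{C'}$, this gives $H^1(\mc{E}) = 0$. Transversality therefore becomes $H^1(\mc{F}) = 0$. Again by the classification, $H^1(\FF_{C'},\mc{O}(\lambda)) = 0$ exactly when $\lambda \ge 0$, so $H^1(\mc{F}) = 0$ if and only if all HN slopes of $\mc{F}$ are nonnegative. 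This proves the lemma.

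Two points need care. The first is the identification of $H^0(\mc{E}) \to N$ with the composite of $d\pi^\tau$ and the projection to $N$. This follows from how $\mc{E}^\circ_S$ is defined as a modification of $\mc{E}^\circ_{\max}$ along $T_S \subseteq T_\impi$, so the map $\mc{E} \to \infty_*N$ factors as $\mc{E} \to \infty_*T_\impi \to \infty_*N$. The second point is the main obstacle: the parenthetical claim in \cref{def.transverse} that surjectivity can be checked either on Banach--Colmez spaces or on $\mathbb{Q}_p$-vector spaces of $C'$-points. For the lemma itself we only use the $C'$-point version, where the cohomology sequence is literal. If the Banach--Colmez version is wanted too, the same argument applies after base change to any perfectoid $C'$-algebra, because the vanishing of $H^1$ for bundles with nonnegative slopes holds in families over $\Spa C'$ locally.
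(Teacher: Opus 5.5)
Your proposal is correct and follows the same route as the paper. You reduce transversality to surjectivity of $T_{\M^\tau,x} \to N_{S,\pi^\tau(x)}$, then use the long exact cohomology sequence of the pullback of \cref{eq.intersection-tangent-bundle-sequence} on $\FF_{C'}$ to identify this with the vanishing of $H^1(\FF_{C'}, x^*\mc{E}^\circ_S)$; your explicit check that $x^*\mc{E}^\circ_{\max}$ has non-negative slopes, so that its $H^1$ vanishes, supplies a step the paper's proof uses without comment.
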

\begin{proof} Transversality of the intersection is equivalent to requiring that $d\pi^\tau$ induces a surjection $T_{\M^\tau, x} \rightarrow N_{S,\pi(x)}$. Applying the long exact sequence of cohomology on $\FF_{C'}$ to the pullback of the short exact sequence \cref{eq.intersection-tangent-bundle-sequence} along $x$ we find this is equivalent to the vanishing of $H^1(\FF_{C'}, x^*\mc{E}^\circ_S)$, which is equivalent to $x^*\mc{E}^\circ_S$ having non-negative Harder--Narasimhan slopes. 
\end{proof}

\begin{definition}\label{def.connected-transverse}
For a rank one geometric point $x: \Spd\+ C' \rightarrow \mc{M}^\tau \cap S$, we say $\M^\tau$ intersects $S$ \emph{connected-tranversely at $x$} if $x^*\mc{E}^\circ_{S}$ has positive Harder--Narasimhan slopes. We say $\M^\tau$ intersects $S$ \emph{connected-transversely} if $\M^\tau$ intersects $S$ connected-transversely at every rank one geometric point. 
\end{definition} 

\begin{remark}
    An intersection that is transverse at $x$ is connected-transverse at $x$ if and only if $T_{\M^\tau \cap S,x}$ is connected, explaining the terminology. 
\end{remark}

\begin{remark}
    By the semi-continuity of the Harder--Narasimhan polygon as in \cite[Theorem II.2.9]{fargues-scholze:geometrization}, the locus of rank one geometric points where the intersection is transverse (resp. connected-transverse) is the set of rank one points of a unique partially proper open in $\M^\tau$. 
\end{remark}

\begin{remark}\label{remark.slope-computation}
Fix a maximal torus in $G_{\overline{\mathbb{Q}}_p}$ and a Borel subgroup containing it, let $\mu$ to be the dominant representative for $[\mu]$, and let $2\rho$ the sum of the positive roots. At each geometric point $x:\Spd\+ C' \rightarrow\mc{M}^\tau \cap S$, $\mr{rank}\+x^*\mc{E}_{\mr{max}}^\circ=\dim G^\der$ and 
\[ \deg x^*\mc{E}_{\mr{max}}^\circ= \langle 2\rho,\mu \rangle = \dim X_{[\mu]}=\dim \Fl_{[\mu]}=\dim \Fl_{[\mu^{-1}]}=\dim \Fl_{[\mu^{-1}]}^{b-\adm}. \]
It follows that $\mr{rank}\+x^*\mc{E}_S^\circ=\dim G^\der$ and
\[ \mr{deg}\+ \mc{E}_S^\circ = \langle 2\rho,\mu \rangle -\mathrm{codim}_{\pi^\tau(x)} S_{C'} = \dim_{\pi^\tau(x)} S_C'-\langle 2\rho,\mu \rangle. \]
Thus the slope of $x^*\mc{E}_S^\circ$ is
\[ \frac{\langle 2\rho,\mu \rangle-\mathrm{codim}_{\pi^\tau(x)} S_{C'}}{\mr{dim}\+ G^\der}=\frac{\dim_{\pi^\tau(x)} S_{C'}-\langle 2\rho,\mu \rangle}{\mr{dim}\+ G^\der}. \]
In particular, when $S$ is equidimensional, this slope is constant equal to 
\[ \frac{\langle 2\rho,\mu \rangle-\mathrm{codim}S}{\mr{dim}\+ G^\der}=\frac{\dim S-\langle 2\rho,\mu \rangle}{\mr{dim}\+ G^\der}. \]
\end{remark}

\subsection{Exceptional intersections}\label{ss.exceptional-intersections}

We continue with the notation of the previous subsection.

\begin{definition}\label{def.exceptional}
    A non-empty connected component of $Z \subseteq \mc{M}^\tau \cap S$ is \emph{exceptional} if, for each rank one geometric point $z$ of $Z$ and $s:=\pi^\tau(z)$,
    \begin{enumerate}
        \item  if $\dim_s S >  \langle 2\rho, \mu \rangle$ then the intersection is not connected transverse at $z$  and
        \item if $\dim_s S \leq  \langle 2\rho, \mu \rangle$ then the intersection is not transverse at $z$ .
    \end{enumerate}
\end{definition} 

\begin{remark}\label{remark.dimension-exceptionality}
In the setting of \cref{def.exceptional}, if $\dim_{s} S < \langle 2\rho, \mu \rangle$ then \cref{remark.slope-computation} implies that the intersection cannot be transverse at $z$ (since a bundle of slope $< 0$ cannot have all non-negative Harder--Narasimhan slopes). Thus the second condition only needs to be checked at points such that $\dim_s S=\langle 2\rho, \mu \rangle$, where \cref{remark.slope-computation} implies the intersection is transverse if and only if $z^*\mc{E}_S^\circ$ is semistable of slope zero (since a vector bundle of slope $0$ whose Harder--Narasimhan slopes are all non-negative is semistable of slope $0$). 
Note that this depends only on $s$, and not the choice of $z$ lying above it in $\M^\tau$. 
\end{remark}

\begin{remark}
    When $\mathrm{dim}\, Z(G) \neq 0$, if we worked with $\M_C$ instead of $\M^\tau$ then none of these intersections would be connected transverse --- in that case, the central action of $Z(G)(\mathbb{Q}_p)$ induces slope zero directions in all intersections. This is our main reason for passing to $\M^\tau$ in the formulation of \cref{conj.ax-schanuel}. 
\end{remark}

In \cite[Definitions 6.14, 7.23, and 8.4]{howe-klevdal:ap-pahsII} we have defined notions of special subvarieties of $X_{[\mu],C}, \Gr_{[\mu],C}^{b-\adm}$ and $\mc{M}_C$ respectively which we recall here (note that the conditions on the weights appearing in these definitions in \cite{howe-klevdal:ap-pahsII} are automatic since we assume $[\mu]$ minuscule). 

\begin{definition}[Special subvarieties]\label{defn:special-subvarieties}
A special subdatum $(H, b_H, [\mu_H])$ consists of a closed subgroup $H \leq G$, $[\mu_H] \subseteq [\mu]$ a $H(\overline{\QQ}_p)$-conjugacy class of cocharacters of $H$, and $b_H \in H(\Qpbreve)$ representing the unique basic element of $B(H, [\mu_H^{-1}])$. A subset $Z$ of $X_{[\mu],C}$ (respectively $\Fl_{[\mu],C}^{b-\adm}$ or $\mc{M}_{C}$) is a \emph{special subvariety} if there exists special subdatum $(H, b_H, [\mu_H])$ such that $Z = X_{[\mu_H],C}$ (respectively $Z$ is $\Fl_{[\mu_H],C}^{b_H-\adm}$, or $Z$ is a connected component of $\mc{M}_{b_H, [\mu_H],C}$ --- in these cases, more precisely we mean that $Z$ is the image under a functoriality map associated to $g \in G(\breve{\QQ}_p)$ that $\sigma$-conjugates $b_H$ to $b$.). 

 We define a special subvariety of $\M^\tau_C$ to be a special subvariety of $\M_C$ that lies in $\M^\tau_C$. 
\end{definition}

\begin{definition}We say a special subvariety of $\mc{M}^\tau_C$ (resp. $\Fl_{[\mu^{-1}],C}^{b-\adm}$, resp. $X_{[\mu], C}$) is \emph{strictly special} if it is attached to a closed connected subgroup $H \leq G$ with $H^\der \neq G^\der$. 
\end{definition}

\begin{example}\label{example.strictly-special-positive-dimension-exceptional} Suppose $V$ is a strictly special subvariety of $\Fl_{[\mu^{-1}],L}^{b-\adm}$ of positive dimension corresponding to a subgroup $H \leq G$, and let $S=V \times_L X_{[\mu],L}$. Then, we claim that at any geometric point $\mc{E}_S^\circ$ admits a surjection onto $\left(\mf{g}^\circ/\mf{h} \cap \mf{g}^\circ\right) \otimes_{\mbb{Q}_p} \mc{O}_\FF$. Given this claim, since $V$ is strictly special, $\mf{g}^\circ \neq \mf{h} \cap \mf{g}^\circ$, and thus $\mc{E}^\circ_S$ cannot have all positive Harder--Narasimhan slopes at any point. In particular, we find $\M^\tau \cap S$ is exceptional if it is non-empty (note $\dim(S) > \dim(X_{[\mu]}) = \langle 2\rho, \mu\rangle$).

It remains to construct the surjection at geometric points
\[ \mc{E}_S^\circ \twoheadrightarrow \left(\mf{g}^\circ/\mf{h} \cap \mf{g}^\circ\right) \otimes_{\mbb{Q}_p} \mc{O}_\FF.\]
We explain how to construct this map in two equivalent ways. 

First, we give a conceptual explanation using the differential perspective via inscribed $v$-sheaves of \cref{remark.inscribed-origin-def}. If we write $V = \Fl_{[\mu_H^{-1}]}^{b_H-\adm}$ for the subdatum $(H, [\mu_H], b_H) \leq (G, [\mu], b)$ and write $\mc{M}^\tau_H:=\mc{M}_{b_H, [\mu_H], L} \times_{\mc{M}_b, [\mu], L} \mc{M}^\tau$, then 
\[ \M^\tau \cap S = \M_H^\tau \times^{H(\mathbb{Q}_p) \cap G^\der(\mathbb{Q}_p)} G^\der(\mathbb{Q}_p).\]
This description of $\mathcal{M}^\tau \cap S$ also applies at the level of inscribed $v$-sheaves, and the natural map from the push-out to $G^\der(\mathbb{Q}_p) / H(\mathbb{Q}_p)\cap G^\der(\mathbb{Q}_p)$ yields a surjection on Tangent Bundles after differentiating the map of inscribed $v$-sheaves. Because the associated vector bundles on the Fargues--Fontaine curve all have non-negative Harder--Narasimhan slopes, passing from these vector bundles on the Fargues--Fontaine curve to their Banach--Colmez spaces of global sections is fully faithful and it follows that at any geometric point $\mc{E}^\circ_S \twoheadrightarrow \left(\mf{g}^\circ / \mf{h} \cap \mf{g}^\circ\right) \otimes \mc{O}_\FF$ (see \cite[Corollary 3.2.4]{Howe.CohomologicalSmoothnessConjecture} for a similar argument).

For our second construction, we use the explicit description of $\mathcal{E}_S^\circ$. From the $H(\mathbb{Q}_p)$-equivariant maps $\mf{h}\cap \mf{g}^\circ \rightarrow \mf{g}^\circ \rightarrow \mf{g}/\mf{h}\cap \mf{g}^\circ $, we obtain a commutative diagram of modification exact sequences over $\mathcal{M}^\tau \cap S$:
\[\begin{tikzcd}
	0 & {\mf{g}^\circ \otimes_{\mathbb{Q}_p} \mathcal{O}_\FF} & { \mc{E}^\circ_S} & {\infty_* T_{V}=\gr^{-1}_\Hdg(\mf{h}\cap \mf{g}^\circ)} & 0 \\
	0 & {\mf{g}^\circ \otimes_{\mathbb{Q}_p} \mathcal{O}_\FF} & {\mc{E}_{\mr{max}}^\circ} & {T_{\Fl_{[\mu^{-1}]}}=\gr^{-1}_\Hdg\mf{g^\circ}} & 0 \\
	0 & {(\mf{g}^\circ/\mf{h}\cap\mf{g}^\circ) \otimes_{\mathbb{Q}_p} \mathcal{O}_\FF} & { \mc{E}'} & {\gr^{-1}_\Hdg(\mf{g}^\circ/\mf{h}\cap\mf{g}^\circ)} & 0
	\arrow[from=1-1, to=1-2]
	\arrow[from=1-2, to=1-3]
	\arrow[from=1-2, to=2-2]
	\arrow[from=1-3, to=1-4]
	\arrow[from=1-3, to=2-3]
	\arrow[from=1-4, to=1-5]
	\arrow[from=1-4, to=2-4]
	\arrow[from=2-1, to=2-2]
	\arrow[from=2-2, to=2-3]
	\arrow[from=2-2, to=3-2]
	\arrow[from=2-3, to=2-4]
	\arrow[from=2-3, to=3-3]
	\arrow[from=2-4, to=2-5]
	\arrow[from=2-4, to=3-4]
	\arrow[from=3-1, to=3-2]
	\arrow[from=3-2, to=3-3]
	\arrow[from=3-3, to=3-4]
	\arrow[from=3-4, to=3-5]
\end{tikzcd}\]
The composition of the rightmost column of arrows is zero, so the composition of the middle column factors through $(\mf{g}^\circ/\mf{h}\cap\mf{g}^\circ) \otimes \mathcal{O}_\FF$, giving the desired map. 
\end{example}

\begin{example}
On the other hand, in the notation of \cref{example.strictly-special-positive-dimension-exceptional}, if $V$ is a zero dimensional special subvariety then it is a single special point and $\mc{M}^\tau \cap S$ is the fiber of $\pi_\Hdg$ over this point. Thus it is identified with $G^\der(\mathbb{Q}_p)$, whose Tangent Bundle is $\ul{\mf{g}^\circ}$, and $\mc{E}_S^\circ=\mf{g}^\circ \otimes_{\mbb{Q}_p} \mathcal{O}_{\FF}$. It follows that in this case  $\mc{M}^\tau \cap S$ is \emph{not} exceptional --- this is good because the same argument applies if $V$ is any point of $\Fl_{[\mu^{-1}]}^{b-\adm}$, not just a special point! 
\end{example}

The following shows special points still arise as exceptional intersections. 

\begin{example}\label{example.special-points-exceptional}
Suppose $S=\Spa\+ C$ is an $\overline{L}$-analytic-point, i.e. $S$ is  a $C$-point coming from a classical point of $\Fl_{[\mu^{-1}],L}^{b-\adm} \times_L X_{[\mu],L}$. Then, by \cite[Theorem B]{howe-klevdal:ap-pahsI}, $\mc{M}^\tau \cap S$ is non-empty if and only if it is a locally profinite set of special points. On the other hand, if $[\mu]$ is not central then $\langle 2\rho, \mu \rangle >0$ and thus, by \cref{remark.dimension-exceptionality}, the intersection is also exceptional if and only if it is non-empty.  
\end{example}

\subsection{The Ax--Schanuel conjecture}\label{ss.ax-schanuel}
\cref{example.strictly-special-positive-dimension-exceptional} and \cref{example.special-points-exceptional} showed that strictly special subvarieties give rise to natural exceptional intersections. The following Ax--Schanuel conjecture says that, conversely, strictly special subvarieties should account for all exceptional intersections of $\M^\tau$ with smooth $\overline{L}$-analytic subvarieties of $\impi$. 

\begin{conjecture}[Minuscule bi-analytic Ax--Schanuel]\label{conj.ax-schanuel}Let $G/\mathbb{Q}_p$ be a reductive group, let $[\mu]$ be a minuscule conjugacy class of cocharacters of $G_{\overline{\mbb{Q}}_p}$, and let $b \in G(\Qpbreve)$ represent the unique basic class in $B(G,[\mu^{-1}])$. Let $L/\Qpbreve([\mu])$ be a $p$-adic field, and for $\impi$ as defined in \cref{ss.ax-schan-setup}, suppose $S \subseteq \impi_C$ is a smooth locally closed $\overline{L}$-analytic subvariety. If $Z \subseteq \mc{M}^\tau \cap S$ is an exceptional component, then $Z$ is contained in a strictly special subvariety of $\mc{M}^\tau_C$.
\end{conjecture}

\begin{remark}
For $Z \subseteq \mc{M}_C$ connected, the following are equivalent:
\begin{enumerate}
    \item $Z$ is contained in a strictly special subvariety of $\mc{M}_C$
    \item $\pi_{\Hdg}(Z)$ is contained in a strictly special subvariety of $\Fl_{[\mu^{-1}],C}^{b-\adm}$
    \item $\pi_\HT(Z)$ is contained in a strictly special subvariety of $X_{[\mu],C}$
\end{enumerate}
Indeed, this follows as the special subvarieties in $\mc{M}_C$ are precisely the connected components of the pre-images under $\pi_\Hdg$ (resp. $\pi_\HT$) of special subvarieties in $\Fl_{[\mu^{-1}],C}^{b-\adm}$ (resp. $X_{[\mu],C}$). Thus, one can reformulate the prediction of \cref{conj.ax-schanuel} using any of these equivalent conditions. 
\end{remark}

\begin{remark}
If the motivic Galois group of the universal $G$-admissible pair over $\Fl_{[\mu^{-1}]}^{b-\adm}$ does not have derived subgroup equal to $G^\der$, then \cref{conj.ax-schanuel} is vacuous since in this case any connected component of $\mathcal{M}^\tau$ will itself be a strictly special subvariety according to our definition. This can occur: for example, one can consider any data $(G,b,[\mu])$ and then take its product
with the trivial data for $\mathrm{SL}_2$. However, one can always pass to the moduli space where $G$ is replaced with the motivic Galois group instead in order to obtain a more interesting conjecture.  
\end{remark}

\begin{remark}[Relation with bi-analytic Ax--Lindemann]\label{remark.ax-lindemann-relation}
 Let $V$ be a smooth subvariety of $\Fl_{[\mu^{-1}],C}^{b-\adm}$ of dimension $>0$ and let $S=V \times_C X_{[\mu],C}$.  Suppose $Z \subseteq \mc{M}^\tau \cap S = \pi_{\Hdg}^{-1}(V)$ is a connected component that is not exceptional. Let $z \in Z$ be a point where the intersection is connected transverse. Since $T_{\mc{M}^\tau \cap S,z}$ has no profinite directions, we expect\footnote{We emphasize that we do not know this discreteness of connected components; this kind of statement is essentially a generalization of the cohomological smoothness conjecture of \cite{Howe.CohomologicalSmoothnessConjecture}.} the set of connected components of $\mc{M}^\tau \cap S$ to be discrete at $z$ and thus for the natural inscribed structure on $Z$ to have the same tangent bundle, i.e. that $T_{Z,z} = T_{\mc{M}^\tau \cap S,z}$. Since this bundle has non-negative slopes, its global sections would span the specialization at $\infty$. From this it would follow that $d\pi_{\HT}(T_{Z,z})$ spans $T_{X_{[\mu]},\pi_{\HT}(z)}.$  In particular, this heuristic implies that $\pi_{\HT}|_{Z}$ should not factor through any rigid Zariski closed subvariety of $X_{[\mu]}$.

If this heuristic can be made precise then, together with \cref{conj.ax-schanuel}, it would imply that for any positive dimensional smooth subvariety $V \subseteq \Fl_{[\mu^{-1}],L}^{b-\adm}$ and connected component of $\pi_{\Hdg}^{-1}(V_C)$, the $\overline{L}$-analytic rigid Zariski closure in $X_{[\mu],C}$ of $\pi_{\HT}(Z)$ is a special subvariety. Indeed, applying \cref{conj.ax-schanuel} we find that we can pass everywhere to a special subvariety where the intersection is no longer exceptional, and then apply the above to obtain Zariski density of the image of $\pi_\HT$. Note that this statement is very closely related to \cite[Theorem A]{howe-klevdal:ap-pahsII} (the same heuristic argument will also apply in the setting of \cref{remark.variants}-(1), giving a statement that is even closer). 
\end{remark}

\begin{remark}[Variants]\label{remark.variants}
We now discuss extensions of \cref{conj.ax-schanuel}; we have focused on the simpler case above rather than these extensions in order to emphasize the analogy with Ax--Schanuel theorems in complex bi-algebraic geometry. 
\begin{enumerate}
    \item For $K \leq G(\mathbb{Q}_p)$ compact open and $K_b \leq G_b(\mathbb{Q}_p)$ compact open, we could replace $\Fl_{[\mu^{-1}]}^{b-\adm}$ with the local Shimura variety $\mathcal{M}_{[\mu], K}$ whose associated diamond is $\M/K$ and $X_{[\mu]}$ with the local Shimura variety $\M_{[\mu^{-1}], K_b}$ whose associated diamond is $\M/K_b$ (this is obtained by the basic duality of \cite[\S8.5]{howe-klevdal:ap-pahsII} as a local Shimura variety for the group $G_b$, and $[\mu^{-1}]$ is viewed as a conjugacy class of cocharacters of the latter). Replacing $\pi_{\Hdg}$ with the projection map $\pi_K : \M \rightarrow \M/K$ and $\pi_{\HT}$ with the projection map $\pi_{K_b}: \M \rightarrow \M/K_b$, one can carry out all of the above constructions for the induced map $\M^\tau \rightarrow (\mathcal{M}_{[\mu], K, C} \times_C \mathcal{M}_{[\mu], K_b, C})^\diamond$. In particular, we expect \cref{conj.ax-schanuel} to hold also for exceptional intersections of $\M^\tau$ with $\overline{L}$-analytic smooth subvarieties $S \subseteq \mathcal{M}_{[\mu], K, C} \times_C \mathcal{M}_{[\mu^{-1}], K_b, C}.$ 
   \item Fixing a representative $\mu$ for $[\mu]$ and writing its centralizer as $M$, there are canonical $M$-torsors over $\Fl_{[\mu^{-1}],C}$ and $\Fl_{[\mu], C}$ whose pullbacks to $\M_C$ are canonically isomorphic after fixing a trivialization of $\mathbb{Z}_p(1)$ over $C$ (this is an incarnation of the the Hodge-Tate comparison). There is a universal rigid analytic variety $\impi'/\impi_C$ for this property, and thus a natural map $q: \M_C \rightarrow {\impi'}^\diamond$. After promotion to a map of inscribed $v$-sheaves, it induces an isomorphism 
   \[ dq \boxtimes \mc{O}: T_{M_C} \boxtimes \mathcal{O} \xrightarrow{\sim}  q^*T_{\impi'}. \]
   This is discussed in more detail and greater generality in \cite{howe.p-adic-manifold-fibrations}. Because of the twist $\impi'$ does not admit a model over a finite extension of $L$, but it does admit natural ineffective descent data so that it still makes sense to discuss $\overline{L}$-analytic subvarieties as those rigid analytic subvarieties of $\impi'$ that are invariant under an open subgroup of $\Gal(\overline{L}/L)$; these include, in particular, the preimages-of $\overline{L}$-analytic subvarieties of $\impi_C$. We can define tangent bundles, intersections with $\M^\tau$, (connected) transversality, exceptional intersections, etc. more generally for these $\overline{L}$-rigid analytic subvarieties of $\impi'$ in order to also formulate \cref{conj.ax-schanuel} in this setting. Even more generally, one could also pullback $\impi'$ to the product of local Shimura varieties as in part (1) to obtain a conjecture in this setting. 
\item When $[\mu]$ is not minuscule, the natural period domains $X_{[\mu]}$ and $\Gr_{[\mu]}^{b-\adm}$ are opens not in flag varieties but instead in Schubert cells in the $\mathbb{B}^+_\dR$-affine Grassmannians. These latter are not rigid analytic, but as in \cite{howe-klevdal:ap-pahsII}, it still makes sense to discuss $\overline{L}$-analytic rigid analytic sub-diamonds (which can be moreover characterized in terms of maps to the flag-variety satisfying Griffiths transversality). In particular, one could also formulate \cref{conj.ax-schanuel} and the variants (1) and (2) above in this setting. 
\end{enumerate} 
\end{remark}

\bibliographystyle{plain}
\bibliography{refs}

\end{document}